\newcommand{\dls}[2][]{\todo[color=green!40,#1]{DLS: #2}}
\newcommand{\N}{{\mathbb{N}}}
\newcommand{\R}{{\mathbb{R}}}
\newcommand{\E}{{\mathbb{E}}}
\newcommand{\prob}{{\mathbb{P}}}
\newcommand{\bvar}[1]{\mathbf{#1}} 
\newcommand{\ErdosRenyi}{{Erd{\H o}s-R{\'e}nyi }}
\theoremstyle{definition}
\newtheorem{theorem}{Theorem}
\newtheorem{Lemma}{Lemma}
\newtheorem{ex}{Example}
\newtheorem{definition}{Definition}
\numberwithin{definition}{section}
\numberwithin{Prop}{section}
\newtheorem{Corollary}{Corollary}
\newtheorem{Remark}{Remark}
\newenvironment{excont}
{\addtocounter{ex}{-1}\begin{ex}{\textit{\textbf{{(continued) }}}}}
  {\end{ex}}
\title{Bias-Variance Tradeoffs in Joint Spectral Embeddings}
\author{Benjamin Draves}
\author{Daniel L. Sussman}
\affil[1]{\small Department of Math. and Stat., Boston University}
\begin{document}

\maketitle

\begin{abstract}
Joint spectral embeddings facilitate analysis of multiple network data by simultaneously mapping vertices in each network to points in Euclidean space where statistical inference is then performed.
In this work, we consider one such joint embedding technique, the omnibus embedding of \textcite{OmniCLT}, which has been successfully used for community detection, anomaly detection, and hypothesis testing tasks. 
To date the theoretical properties of this method have only been established under the strong assumption that the networks are conditionally i.i.d.\ random dot product graphs.
Herein, we take a first step in characterizing the theoretical properties of the omnibus embedding in the presence of heterogeneous network data.
Under a latent position model, we show the omnibus embedding implicitly regularizes its latent position estimates which induces a finite-sample bias-variance tradeoff for latent position estimation.
We establish an explicit bias expression, derive a uniform concentration bound on the residual, and prove a central limit theorem characterizing the distributional properties of these estimates.
These explicit bias and variance expressions enable us to state sufficient conditions for exact recovery in community detection tasks and develop a pivotal test statistic to determine whether two graphs share the same set of latent positions; demonstrating that accurate inference is achievable despite the estimator's inconsistency.
These results are demonstrated in several experimental settings where statistical procedures utilizing the omnibus embedding are competitive, and oftentimes preferable, to comparable embedding techniques.
These observations accentuate the viability of the omnibus embedding for multiple graph inference beyond the homogeneous network setting. 
\end{abstract}

\section{Introduction}

Multiplex networks describe a set of entities, with multiple types of relationships among them, as a collection of networks over a common vertex set \cite{Kivela2014, Battiston2016_review}.
There is a growing demand for inferential frameworks for multiplex networks in a diverse variety of fields such as neuroscience \parencite{Battiston2016, DeDomenico2017,ginestet2017, kong2021multiplex}, transportation systems \parencite{Cardillo2013,Kaluza2010}, and the social sciences \parencite{Coscia2013, Goldblum2019, Takes2018, Szell2013, Stella_2017, Lazega2016, kim2021link}. 
While developing a principled paradigm for random graph inference has been of great interest for individual networks \parencite{Kolaczyk2009, RDPGSurvey}, lesser attention has been given to multiplex networks.
This data structure provides a more detailed representation of complex systems by viewing the collection of networks as being drawn from a multivariate network distribution.
However, it poses novel challenges when developing a formal statistical framework that requires new insights.

Several recent works have focused on extending familiar descriptive statistics such as clustering coefficients \parencite{Battiston2014, Cozzo2013, Baxter_2016} and node centrality scores \parencite{Tudisco2017, Taylor2019,bergermann2021matrix}, tools for network visualization \parencite{MuxViz2014, Fatemi2016}, and community detection algorithms \parencite{MA2018, Hmimida2015} to multilayer network data.
Multi-graph models inspired by individual network models have been proposed in an attempt to capture multilayer network structure \parencite{Bianconi2013, Nicosia2015, Murase2014} and corresponding approaches to estimation and subsequent inference include likelihood approaches, tensor decompositions, and variational methods \parencite{Paez201, Paul2018, Gligorijevic2019}.
While some of these frameworks are constructed for general multilayer networks, we restrict our attention to multiplex networks; that is we study collections of node-aligned networks over a common vertex set.

A class of models that have seen success in capturing multiplex network phenomena, such as multilayer and time varying community structure, while remaining analytically tractable are latent position models (LPM) extended to multiplex data \cite{OmniCLT, wang2017, nielsen2018, arroyo2019, jones2020multilayer, macdonald2021latent}.
LPMs for single networks posit that the vertices are each associated with a {\em latent position} in a low dimensional space \parencite{Hoff2002, CLS, rubindelanchy2017}, and
one of the most ubiquitous examples is the Random Dot Product Graph (RDPG) \parencite{RDPG}.
Under this model, edge connection probabilities are given by the inner product of the incidents nodes' latent positions.
Estimates of these vectors, called \textit{node embeddings}, are then amenable to statistical analysis using familiar techniques from multivariate statistics and machine learning \parencite{Luxburg2007, RDPGSurvey, PerfectClustering}.
In characterizing the behavior of these estimates, one can derive guarantees on the statistical algorithms that use these node representations \parencite{PerfectClustering, Semi-Par, EigenCLT, OmniCLT}. 

In multiplex networks, we anticipate different layers of the network to share common structure while maintaining layer-specific deviations. 
To date, little work has been dedicated to understanding the finite sample properties and asymptotic distributions of latent position estimators under (layer)-heterogeneous network models, save a few notable exceptions \parencite{arroyo2019, jones2020multilayer}.
While the utility of each method depends on the interplay between the embedding technique and the inference task at hand, embedding methods that share strength across networks regularly outperform techniques that do not use this common structure.

In this work, we consider a joint embedding technique, the omnibus embedding of \textcite{OmniCLT}, for latent position estimation.
The omnibus embedding has proven useful in a wide array of inference tasks including community detection~\parencite{pantazis2021importance, arroyo2019}, hypothesis testing~\parencite{OmniCLT, jones2020multilayer, Vogelstein2019, arroyo2019}, anomaly detection~\parencite{chen2020multiple}, and vertex classification~\parencite{pantazis2021importance}.
To date, however, the theoretical properties of the latent position estimates produced by the omnibus embedding have only been established under a (layer-)homogeneous network model where each random adjacency matrix is marginally distributed according to a RDPG with the same latent positions.
In what follows, we take a first step in characterizing the asymptotic behavior of the omnibus embedding estimates under a simple heterogeneous network model. 
With these results, we are equipped to assess the viability of omnibus embedding as an analysis technique beyond the homogeneous network setting.

To initiate this analysis, we propose the \textit{Eigen-Scaling Random Dot Product Graph} (ESRDPG) as a model that extends the RDPG to multiplex network data.
The ESRDPG is similar to models proposed in \textcite{wang2017}, \textcite{arroyo2019}, and \textcite{nielsen2018}. 
In studying the omnibus embedding under the ESRDPG, we are able to extend the analysis completed in \textcite{OmniCLT} to the (layer-)heterogeneous network setting. 
Under the ESRDPG, we provide an analytic expression for the bias of the latent position estimates, along with concentration and distributional results.
Moreover, we establish an explicit covariance between these latent position estimates that enable for the rigorous analysis of algorithms that use linear combinations of these estimates. 
These results shed light on an implicit finite-sample bias-variance tradeoff induced by the omnibus embedding for heterogeneous networks. 
Next, we analyze the impact of the bias-variance tradeoff in subsequent statistical inference tasks such as multiplex community detection and network hypothesis testing. 
We theoretically and empirically demonstrate that applying common clustering techniques (e.g. k-means clustering, Gaussian Mixture Models) to the estimated latent positions produced by the omnibus embedding is competitive with state of the art methods for multiplex community detection and substantially outperforms unbiased embedding approaches that do not pool information across layers.
Moreover, we propose a pivotal test statistic that can be estimated directly from the data that allow for parametric testing procedures to be utilized in multiple network hypothesis testing.

This paper will be organized as follows.
In Section~\ref{Section: Background}, we present the omnibus embedding and consider examples where accurate inference is attained using the omnibus embedding in the presence of heterogeneous network data.
We then introduce the ESRDPG model and discuss its properties.
In Section~\ref{Section: Main Results}, we provide theoretical results that establish the asymptotic bias and distribution of the omnibus embedding's latent position estimates and 
explore this bias-variance tradeoff in latent position estimation.
In Section~\ref{Section: Statistical Consequences}, we analyze the ramifications of these theoretical results on statistical tasks such as multiplex community detection and two-graph hypothesis testing and empirically demonstrate these results in rigorous simulation settings. 
Finally, in Section~\ref{Section: Extensions and Discussion} we discuss several extensions to the current work.

\section{Background \& the ESRDPG}\label{Section: Background}

In this section, we first revisit the omnibus embedding and demonstrate its efficacy in two multiplex inference tasks.
We then review the Random Dot Product Graph (RDPG) for single-layer network data and introduce our extension to multiplex network data, the Eigen-Scaling RDPG (ESRDPG) for modeling heterogeneous networks.
Finally, we collect notation used throughout this paper in Table~\ref{tab:notation_table}.

\subsection{Omnibus Embedding}
Suppose we have a multiplex network represented by a collection of $m$ adjacency matrices $\{\bvar{A}^{(g)}\}_{g=1}^m\subset \R^{n\times n}$ over a common vertex set $\mathcal{V}$ of size $|\mathcal{V}| = n$.
The omnibus embedding \textit{simultaneously} embeds the vertices of each network into a common $d$-dimensional Euclidean space $\R^d$ by decomposing the $nm\times nm$ omnibus matrix \parencite{OmniCLT}.
We introduce this embedding procedure in Definition~\ref{def:omnibus_embedding}.

\begin{definition}\label{def:omnibus_embedding}
Let $\{\bvar{A}^{(g)}\}_{g=1}^m\in\R^{n\times n}$ be a a set of undirected, vertex-aligned, adjacency matrices.
Let $\tilde{\bvar{A}}\in\R^{nm\times nm}$ be the omnibus matrix of $\{\bvar{A}^{(g)}\}_{g=1}^m$ given by 
\begin{equation*}
    \tilde{\bvar{A}} = \begin{bmatrix}
    \bvar{A}^{(1)} & \frac{\bvar{A}^{(1)} + \bvar{A}^{(2)}}{2} & \dots & \frac{\bvar{A}^{(1)} + \bvar{A}^{(m)}}{2}\\
    \frac{\bvar{A}^{(2)} + \bvar{A}^{(1)}}{2} & \bvar{A}^{(2)} & \dots & \frac{\bvar{A}^{(2)} + \bvar{A}^{(m)}}{2}\\
    \vdots & \vdots & \ddots & \vdots\\
    \frac{\bvar{A}^{(m)} + \bvar{A}^{(1)}}{2} & \frac{\bvar{A}^{(m)} +\bvar{A}^{(2)}}{2} & \dots & \bvar{A}^{(m)}
    \end{bmatrix}.
\end{equation*}
Denote the eigendecomposition of $\tilde{\mathbf{A}}$ as
$\tilde{\bvar{A}} = [\bvar{U}_{\tilde{\bvar{A}}}|\bvar{U}_{\tilde{\bvar{A}}}^{\perp}][\bvar{S}_{\tilde{\bvar{A}}}\oplus\bvar{S}_{\tilde{\bvar{A}}}^{\perp}][\bvar{U}_{\tilde{\bvar{A}}}|\bvar{U}_{\tilde{\bvar{A}}}^{\perp}]^T$
where the columns of $\bvar{U}_{\tilde{\bvar{A}}}\in\R^{nm\times d}$ are the $d$ eigenvectors of $\tilde{\bvar{A}}$ corresponding to the $d$ largest positive eigenvalues of $\tilde{\bvar{A}}$ and $\bvar{S}_{\tilde{\bvar{A}}}\in\R^{d\times d}$ is a diagonal matrix containing these $d$ eigenvalues in non-increasing order. 
Then the \textit{d-dimensional omnibus embedding} of $\{\bvar{A}^{(g)}\}_{g=1}^m$, denoted as $ \text{Omni}\left(\{\bvar{A}^{(g)}\}_{g=1}^m,d\right)$, is given by the $d$-dimensional spectral embedding of $\tilde{\bvar{A}}$
\begin{equation}
    \text{Omni}\left(\{\bvar{A}^{(g)}\}_{g=1}^m,d\right) = \bvar{U}_{\tilde{\bvar{A}}}\bvar{S}_{\tilde{\bvar{A}}}^{1/2}\in\R^{nm\times d}.
\end{equation}
\end{definition}
Note that the omnibus embedding is of dimension $nm\times d$ yielding $m$ separate node embeddings for each vertex $v\in\mathcal{V}$.
Indeed, the $n(g-1) + i$ row of $\text{Omni}(\{\bvar{A}^{(g)}\}_{g=1}^m, d)$, which we denote $\hat{\bvar{X}}^{(g)}_i$, is the omnibus node embedding for vertex $i\in[n]$ in graph $g\in[m]$.
These separate node embeddings for each vertex in each graph offer a clear path to address multiplex network inference tasks. 
For instance, graph level hypothesis testing can be carried out by considering the test statistic $\hat{\delta}^{(g,k)}=n^{-1}\sum_{i=1}^n\|\hat{\bvar{X}}_i^{(g)} - \hat{\bvar{X}}_i^{(k)}\|_2^2$ for each $g,k\in[m]$, community detection can be completed by applying a clustering algorithm to the points $\{\bar{\bvar{X}}_i := m^{-1}\sum_{g=1}^m\hat{\bvar{X}}_i^{(g)}\}_{i=1}^n$, and vertex anomaly detection can be carried out by analyzing the vertex level variation $\hat{\tau}_i^{(g)} = \hat{\bvar{X}}_i^{(g)} - \bar{\bvar{X}}_i$.

To date, the omnibus embedding has only been theoretically analyzed under a homogeneous network model where the $\{\bvar{A}^{(g)}\}_{g=1}^m$ are drawn i.i.d.\ from the same distribution 
so each $\bvar{A}^{(g)}$ has the same conditional expectation given the latent positions.
Several recent works, however, present empirical evidence where the omnibus embedding offers competitive performance on network inference tasks in the \textit{heterogeneous} network setting \parencite{arroyo2019, pantazis2021importance, jones2020multilayer, chen2020multiple}.
We add to this empirical evidence in Example~\ref{Ex:omni_under_heterogenous_comm_det} and Example~\ref{Ex:omni_under_heterogenous_anomly_detection} where the omnibus embedding offers competitive performance in both community detection and vertex anomaly detection tasks where the networks are drawn from different distributions.

\begin{ex}\label{Ex:omni_under_heterogenous_comm_det}
Suppose there are $m=2$ networks over $n = 300$ vertices where the vertices are partitioned into three communities of size $n/3$.
We sample $\bvar{A}^{(1)}$ from a three group, stochastic block model (SBM) with connectivity matrix $\bvar{B}$.
Next, using the same community assignments as in $\bvar{A}^{(1)}$, we sample $\bvar{A}^{(2)}$ from a three group SBM with connectivity matrix $\bvar{B} + \lambda\bvar{J}$, where $\lambda\in[0,1]$.
The parameters are given by 
\begin{equation}\label{motivation_ex_parameters}
    \bvar{B} = \begin{bmatrix}
    0.2 & 0.1 & 0.1 \\
    0.1 & 0.2 & 0.1 \\
    0.1 & 0.1 & 0.2 
    \end{bmatrix}\quad \quad \bvar{J} = 
    \begin{bmatrix}
        0 & 0 & 0 \\
        0 & 0 & 0.1 \\
        0 & 0.1 & 0 
    \end{bmatrix} \quad \quad \bvar{Z} = \begin{bmatrix}
        \bvar{1}_{n/3} & \bvar{0}_{n/3} & \bvar{0}_{n/3}\\
        \bvar{0}_{n/3} & \bvar{1}_{n/3} & \bvar{0}_{n/3}\\
        \bvar{0}_{n/3} & \bvar{0}_{n/3} & \bvar{1}_{n/3}
    \end{bmatrix}. 
\end{equation}
Notice $\lambda = 0$ corresponds to a $K=3$ group structure, $\lambda = 1$ corresponds to a $K=2$ group SBM where the second and third community are merged into a single community of size $2n/3$, and $\lambda\in(0,1)$ denotes a convex combination of the $K=2$ and $K=3$ block probability matrices. 

Varying $\lambda\in[0,1]$, we first sample $\bvar{A}^{(1)}\sim \text{Bern}(\bvar{Z}\bvar{B}\bvar{Z}^T)$ and $\bvar{A}^{(2)}\sim\text{Bern}( \bvar{Z}(\bvar{B} + \lambda\bvar{J})\bvar{Z}^T)$ and then attain node embeddings using the omnibus embedding for both networks, denoted $\hat{\bvar{X}}^{(1)}, \hat{\bvar{X}}^{(2)}\in\R^{n\times 3}$. 
We then apply $k$-means clustering to the average node embeddings found in the rows of $2^{-1}(\hat{\bvar{X}}^{(1)} + \hat{\bvar{X}}^{(2)})$.
Finally we compute the missclassification rate of these estimated community labels and complete $T = 1000$ monte carlo iterations of this process.

We compare the classification accuracy of this approach with $k$-means clustering applied to several other spectral node embeddings which we denote as ASE1, ASE2, ASEbar,  Abar, and MASE.
ASE1 and ASE2 refer to adjacency spectral embedding of \textcite{ASE} of $\bvar{A}^{(1)}$ and $\bvar{A}^{(2)}$, respectively, and ASEbar refers to their average, after alignment. 
Abar refers to the adjacency spectral embedding of $2^{-1}(\bvar{A}^{(1)} + \bvar{A}^{(2)})$ and MASE refers to a joint spectral embedding technique introduced in \textcite{arroyo2019}.
The results of this simulation can be found in Figure~\ref{fig:motivating_ex_comm_det}.
\begin{figure}[h!]
    \centering
    \includegraphics[width = 0.6\linewidth]{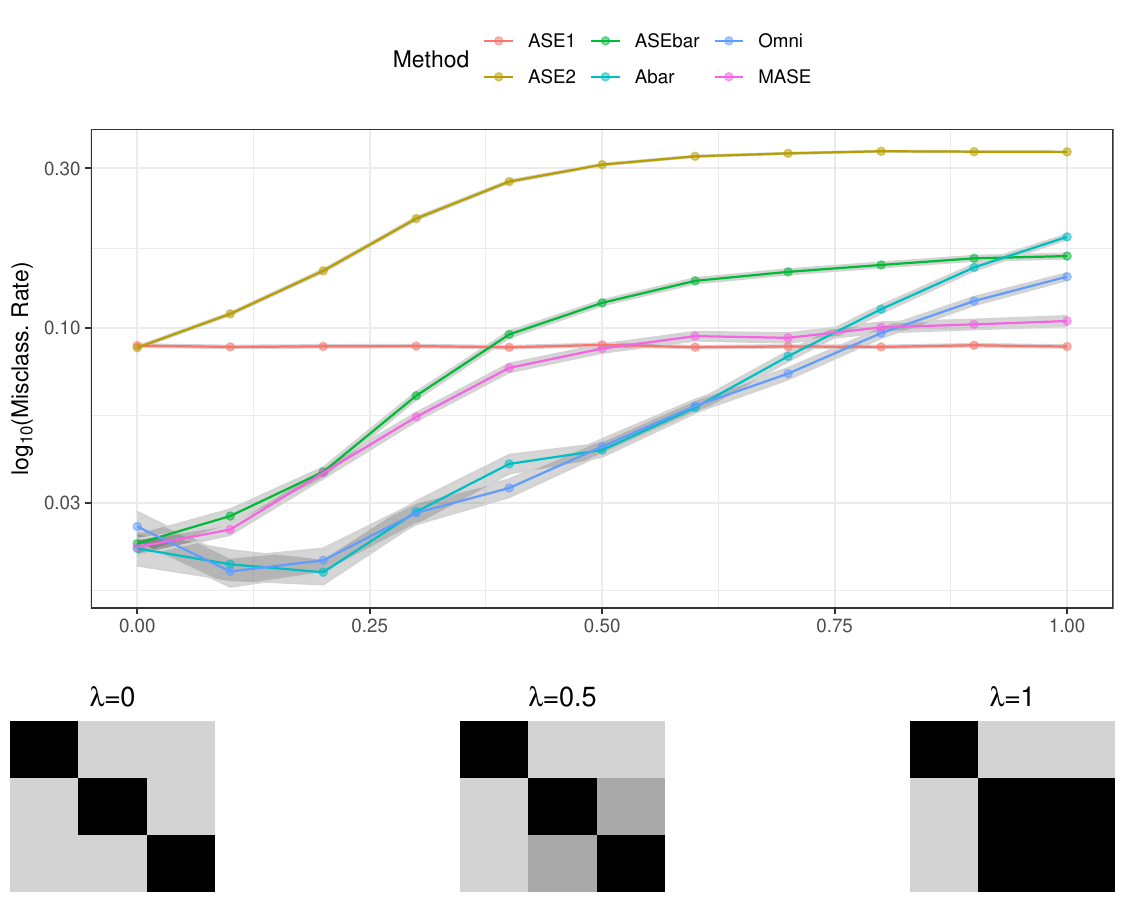}
    \caption{The missclassification rate of the $k$-means clustering algorithm applied to the node embeddings for each embedding technique and for each $\lambda \in[0,1]$.
    Block probabilities vary with $\lambda$ and are visualized across the horizontal axis.}
    \label{fig:motivating_ex_comm_det}
\end{figure}

$\lambda = 0$ represents the homogeneous network setting where ASEbar, Omni, Abar, and MASE all demonstrate comparable performance. 
These methods are all rely on information from both networks and hence have significantly lower error than the single network approaches ASE1, ASE2.
For $\lambda \geq 0.75$, ASE1 has the lowest error as the second graph introduces noise in the detection of the third community and by ignoring this network, ASE1 performs the best.
For $\lambda\in[0.1, 0.7]$ Omni and Abar offer the best performance by a non-trivial margin.
\qed
\end{ex}

Example~\ref{Ex:omni_under_heterogenous_comm_det} demonstrates the omnibus embedding offers a robust approach for community detection.  
Next, we consider the omnibus embedding as a tool for identifying anomalous vertices. 
Similar observations made here were first presented in \textcite{chen2020multiple}.

\begin{ex}\label{Ex:omni_under_heterogenous_anomly_detection}
Consider $m = 2$ networks over $n = 300$ vertices and sample $\bvar{A}^{(1)}\sim\text{Bern}(\bvar{ZBZ}^T)$ from the three group SBM presented in equation~\eqref{motivation_ex_parameters}.
Let $\bvar{M} = \bvar{ZB}^{1/2}$ be the matrix square root of $\bvar{ZBZ}^T$.
Then, for $\alpha\in[0, 0.1]$, we replace $(100\times \alpha)\%$ of the rows of $\bvar{M}$ with an i.i.d.\ sample from $\text{Dir}(1,1,1)$ which we denote as $\bvar{M}_{\alpha}$.
For the sake of this example, we regard the vertices corresponding to these altered rows as anomalous vertices.
We then sample $\bvar{A}^{(2)}\sim\text{Bern}(\bvar{M}_{\alpha}\bvar{M}_{\alpha}^T)$.

In an attempt to identify these anomalous vertices, we first embed $(\bvar{A}^{(1)}, \bvar{A}^{(2)})$ into $\R^3$ using the omnibus embedding which yields $(\hat{\bvar{X}}^{(1)}, \hat{\bvar{X}}^{(2)})$.
We then compute $\hat{\tau}_i = \|(\hat{\bvar{X}}^{(1)} - \hat{\bvar{X}}^{(2)})_i\|_2$ and visualize the sorted $\{\hat{\tau}_i\}_{i=1}^n$.
We complete a similar approach using MASE \textcite{arroyo2019} and ASE \textcite{ASE}.
Note we cannot use the Abar method from above in this context as the method collapses all networks into a single representation making graph-wise differencing impossible. 
The results of this simulation can be found in Figure~\ref{fig:motivating_ex_anomaly_detection}.

\begin{figure}[h!]
    \centering
    \includegraphics[width = 0.75\linewidth]{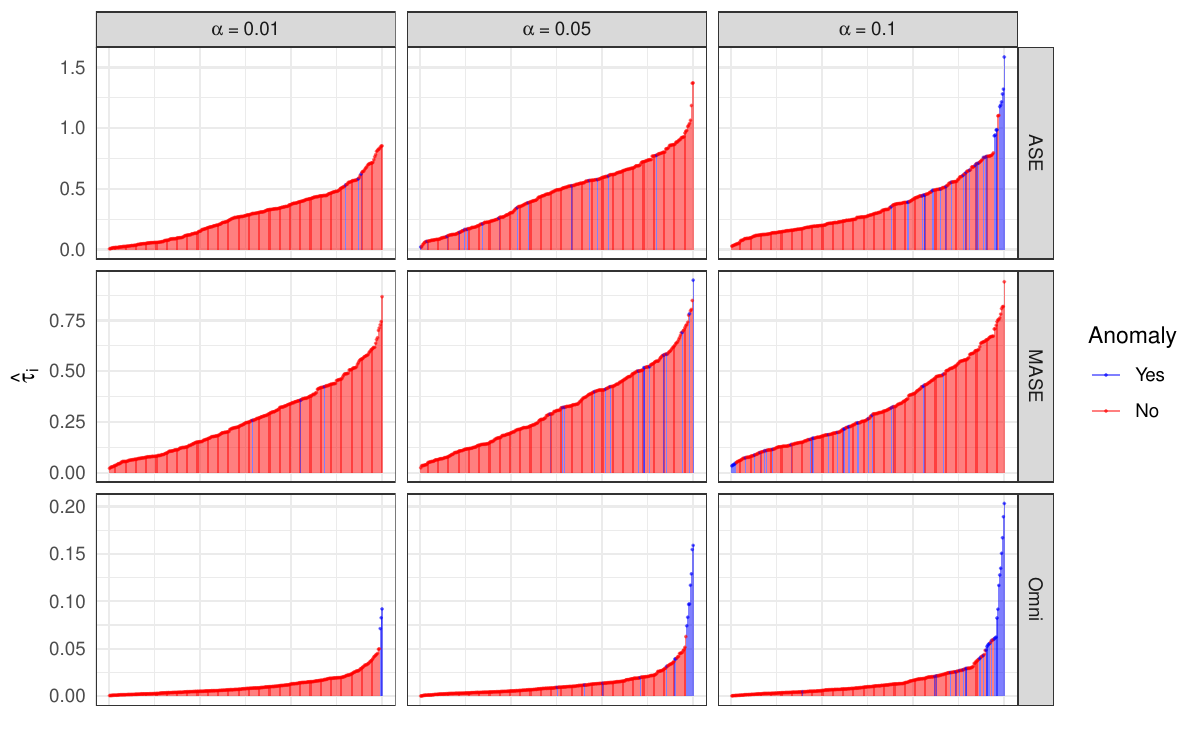}
    \caption{The test statistic $\hat{\tau}_i$ for each vertex $i\in[n]$ colored by if the node is anomalous (blue) or not (red). 
    The columns represent the proportion, $\alpha\in\{0.01, 0.05, 0.1\}$ of rows altered in $\bvar{M}$ while the rows represent each embedding method.  
    The vertical axis visualizes $\hat{\tau}_i$ while the horizontal axis is the index corresponding to the ordering $\hat{\tau}_{[i]}$. }
    \label{fig:motivating_ex_anomaly_detection}
\end{figure}

For $\alpha = \{0.01, 0.05\}$, $\text{Omni}$ begins to successfully separate the bulk of the anomalous vertices from the standard vertices while the MASE and ASE methods are not able to rank these vertices the highest. 
For $\alpha = 0.1$, Omni appears to separate the majority of anomalous vertices by a larger margin than in the $\alpha = 0.05$ setting and ASE begins to separate these vertices but not to the extent of Omni.
MASE does not rank the majority of anomalous vertices higher than the standard vertices for any value of $\alpha$.

As we detail in Section~\ref{Section: Main Results}, we anticipate the omnibus node embedding differences $\{\hat{\tau}_i\}_{i=1}^n$ will be poor estimates of the true distances $\|\bvar{X}_i^{(1)} - \bvar{X}_i^{(2)}\|_2$ but note that these estimates correctly rank the distances between latent positions.
This suggests that while the omnibus embedding may not yield accurate latent position estimates, the method nonetheless provides node embeddings whose \textit{relative} positions elicit vertex-level differences between graphs.
Finally, while this analysis is exploratory in nature, a formal semi-parametric testing approach was proposed in \textcite{chen2020multiple}.
Their findings are consistent with the observations made above and suggest that the omnibus embedding can serve as an anomaly detection tool in the heterogeneous network setting. 
\qed
\end{ex}

Example~\ref{Ex:omni_under_heterogenous_comm_det} and Example~\ref{Ex:omni_under_heterogenous_anomly_detection} provide empirical evidence that the omnibus embedding offers competitive, if not preferable, performance on inference tasks in the heterogeneous network setting compared to other joint spectral embedding methods.
In an attempt to understand this behavior, we take a first step in characterizing the omnibus embedding under a heterogeneous network model. 
We begin this investigation by introducing the Eigen-Scaling Random Dot Product Graph.

\subsection{Eigen-Scaling Random Dot Product Graph}\label{Subsection: ESRDPG}

Under the Random Dot Product Graph~\parencite{RDPG}, each vertex in the graph is associated with a latent position in Euclidean space.
Conditional on these latent positions, edge connection probabilities are given by the inner product of the latent positions. 

\begin{definition}\label{RDPG}
Suppose that $\bvar{x}_1, \bvar{x}_2, \ldots, \bvar{x}_n\in\R^d$ have the property that $\bvar{x}_i^T\bvar{x}_j \in[0,1]$ for all $i,j\in [n]$.
Then we say that a random adjacency matrix $\bvar{A}\in\R^{n\times n}$ follows a \textit{Random Dot Product Graph} with latent positions $\{\bvar{x}_i\}_{i=1}^n$ if $\{\bvar{A}_{ij}\}_{i<j}$ are conditionally independent with $\bvar{A}_{ij}|\bvar{x}_i, \bvar{x}_j \sim \text{Bern}(\bvar{x}_i^T\bvar{x}_j)$ for $i<j$.
\end{definition}

We will assume that the latent positions are drawn i.i.d.\ from a distribution $F$ over an appropriate subset of $\R^d$.
In order to capture varying network structure, we extend the RDPG to the multiplex graph setting by applying graph-specific weights to the inner products between each vector in the support of $F$.  
The distribution $F$ induces a space of $d\times d$, diagonal matrices $\mathcal{C}_F$ that weight each inner product while remaining in the unit interval. 
We specify the requisite properties of $F$ and its weighting space $\mathcal{C}_F$ in Definition~\ref{Inner Product Distribution}.

\begin{definition}\label{Inner Product Distribution}
Let $F$ be a distribution over $\R^d$ with the property that for all $\bvar{x}, \bvar{y}\in\text{supp}(F)$ that $\bvar{x}^T\bvar{y}\in[0,1]$. 
If $F$ satisfies these properties we say $F$ is a \textit{d-dimensional inner production distribution}.
We say $F$ induces a \textit{diagonal weighting space}, $\mathcal{C}_F$, where $\mathcal{C}_F$ is given by 
    \begin{align*}
        \mathcal{C}_F = \{\bvar{C}\in \R_{\geq 0}^{d\times d}: \bvar{C}\text{ is diagonal}, \bvar{x}^T\bvar{Cy}\in[0,1],  \forall  \bvar{x},\bvar{y}\in\text{supp}(F)\}.
    \end{align*}
\end{definition}

This leads us to our definition of the ESRPDG model.

\begin{definition}\label{ESRDPG} Let $F$ be a $d$-dimensional inner product distribution such that for $\bvar{y}\sim F$ the second moment matrix $\Delta = \E[\bvar{yy}^T]\in\R^{d\times d}$ is diagonal and full rank.
Let $\bvar{X}_1, \ldots, \bvar{X}_n \overset{i.i.d.}{\sim}F$ and organize these vectors in the rows of the matrix $\bvar{X} = [\bvar{X}_1,\bvar{X}_2, \dotsc, \bvar{X}_n]^T$.
Let $\bvar{C}^{(1)},\ldots, \bvar{C}^{(m)}\in\mathcal{C}_F$ with the property that $\min_{i\in[d]}\max_{g\in[m]}\bvar{C}_{ii}^{(g)}>0$. 
We say that the vertex-aligned, random adjacency matrices $\{\bvar{A}^{(g)}\}_{g=1}^m $ are distributed according to the \textit{Eigen-Scaling Random Dot Product Graph} and write $(\{\bvar{A}^{(g)}\}_{g=1}^m, \bvar{X})\sim \text{ESRDPG}(F, n, \{\bvar{C}^{(g)}\}_{g=1}^m)$ if 
\begin{equation*}
    \prob[\bvar{A}^{(1)}, \bvar{A}^{(2)}, \ldots, \bvar{A}^{(m)}|\bvar{X}] = \prod_{g=1}^m\prod_{i<j}(\bvar{X}_i^T\bvar{C}^{(g)}\bvar{X}_j)^{\bvar{A}_{ij}^{(g)}}(1-\bvar{X}_i^T\bvar{C}^{(g)}\bvar{X}_j)^{1-\bvar{A}_{ij}^{(g)}}. 
\end{equation*}
Under this model, $\{\bvar{A}^{(g)}\}_{g=1}^m$ are conditionally independent given $\bvar{X}$ with $\bvar{A}_{ij}^{(g)}|\bvar{X}\sim \text{Bernoulli}(\bvar{X}_i^T\bvar{C}^{(g)}\bvar{X}_j)$.
\end{definition}

By associating each network with a different weighting matrix in $\mathcal{C}_F$, we can capture variation between networks within the RDPG framework.
Given $\bvar{X}$, the probability of observing an edge between vertex $i$ and vertex $j$ in graph $g$ is given by $\bvar{X}_i^T\bvar{C}^{(g)}\bvar{X}_j$. 
We denote the matrix containing these probabilities as $\bvar{P}^{(g)} = \bvar{X}\bvar{C}^{(g)}\bvar{X}^T$ for each graph $g\in[m]$, so that 
$\bvar{A}^{(g)}|\bvar{X}\sim\text{Bern}(\bvar{P}^{(g)})$.
See Figure \ref{fig:hierarchy_DSRDPG} for a visual illustration of this model.

\begin{figure}[h!]
    \begin{adjustwidth}{-.5in}{-.5in}
    \centering
    \begin{tikzpicture}
    \node[shape=circle,draw=black, fill = red, fill opacity=0.5, text opacity=1] (F) at (0,0) {$\bvar{X}\sim F$};
    
    
    \node[shape=rectangle, draw= yellow, draw opacity = .5, line width = .7mm, align = center] (L1) at (-7,-3.5) {
    $ \bvar{X}\sqrt{\bvar{C}^{(1)}}$\\
    \includegraphics[width = 1.3in, height = 1.3in]{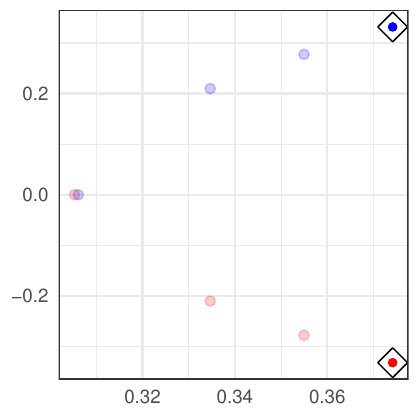}
    };
    \node[shape=rectangle, draw= yellow, draw opacity = .5, line width = .7mm, align = center] (L2) at (-3,-3.5) {
    $ \bvar{X}\sqrt{\bvar{C}^{(2)}}$\\ 
    \includegraphics[width = 1.3in, height = 1.3in]{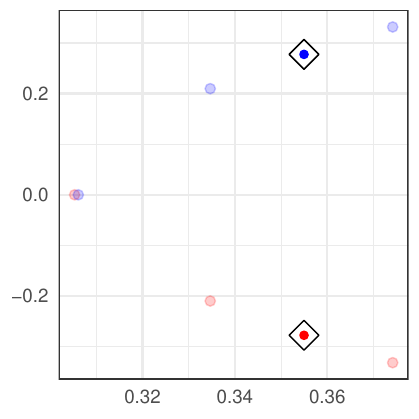}
    };
    \node[shape=rectangle, draw= yellow, draw opacity = .5, line width = .7mm, align = center] (Lm1) at (3,-3.5) {$ \bvar{X}\sqrt{\bvar{C}^{(m-1)}}$\\
    \includegraphics[width = 1.3in, height = 1.3in]{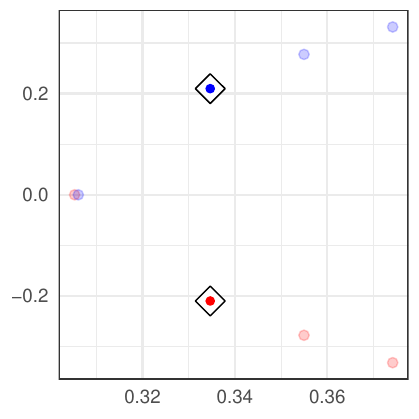}};
    \node[shape=rectangle, draw= yellow, draw opacity = .5, line width = .7mm, align = center] (Lm) at (7,-3.5) {
    $ \bvar{X}\sqrt{\bvar{C}^{(m)}}$\\
    \includegraphics[width = 1.3in, height = 1.3in]{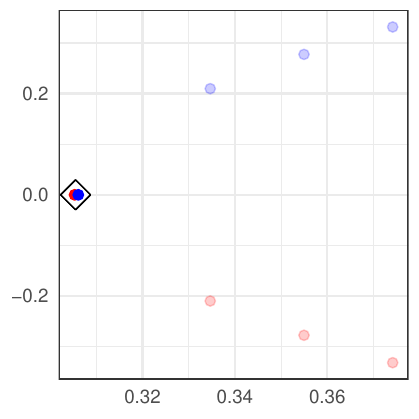}};
    
    \draw [loosely dotted] (L2) -- (Lm1);
    \draw [->] (F) ..controls+(left:10mm)and+(up:30mm)..(L1);
    \draw [->] (F) ..controls+(left:10mm)and+(up:25mm)..(L2);
    \draw [->] (F) ..controls+(right:10mm)and+(up:25mm)..(Lm1);
    \draw [->] (F) ..controls+(right:10mm)and+(up:30mm)..(Lm);
    
    \node[shape=rectangle, draw= green, draw opacity = .5, line width = .7mm, align = center] (A1) at (-7,-8) {\includegraphics[width = 1.3in, height = 1.5in]{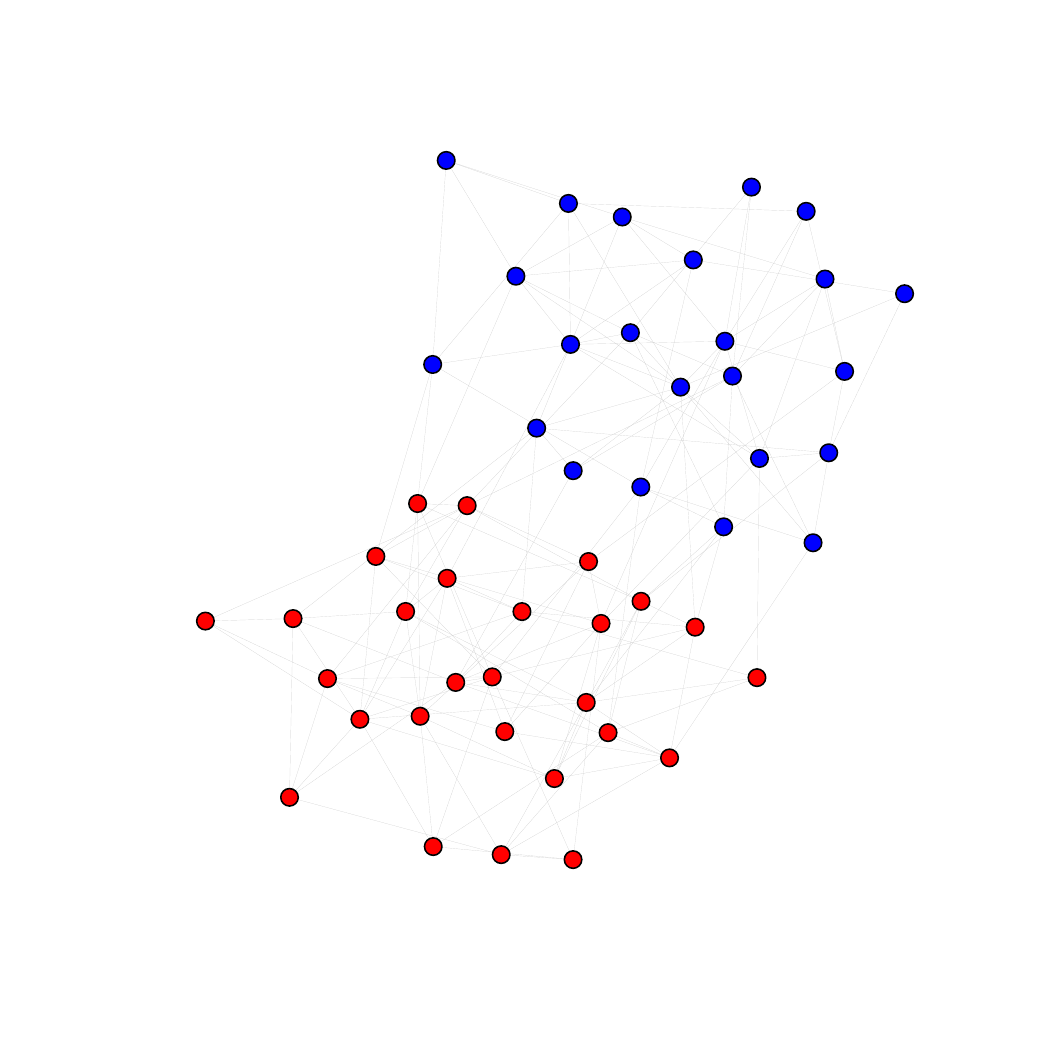}};
    \node[shape=rectangle, draw= green, draw opacity = .5, line width = .7mm, align = center] (A2) at (-3,-8) {\includegraphics[width = 1.3in, height = 1.5in]{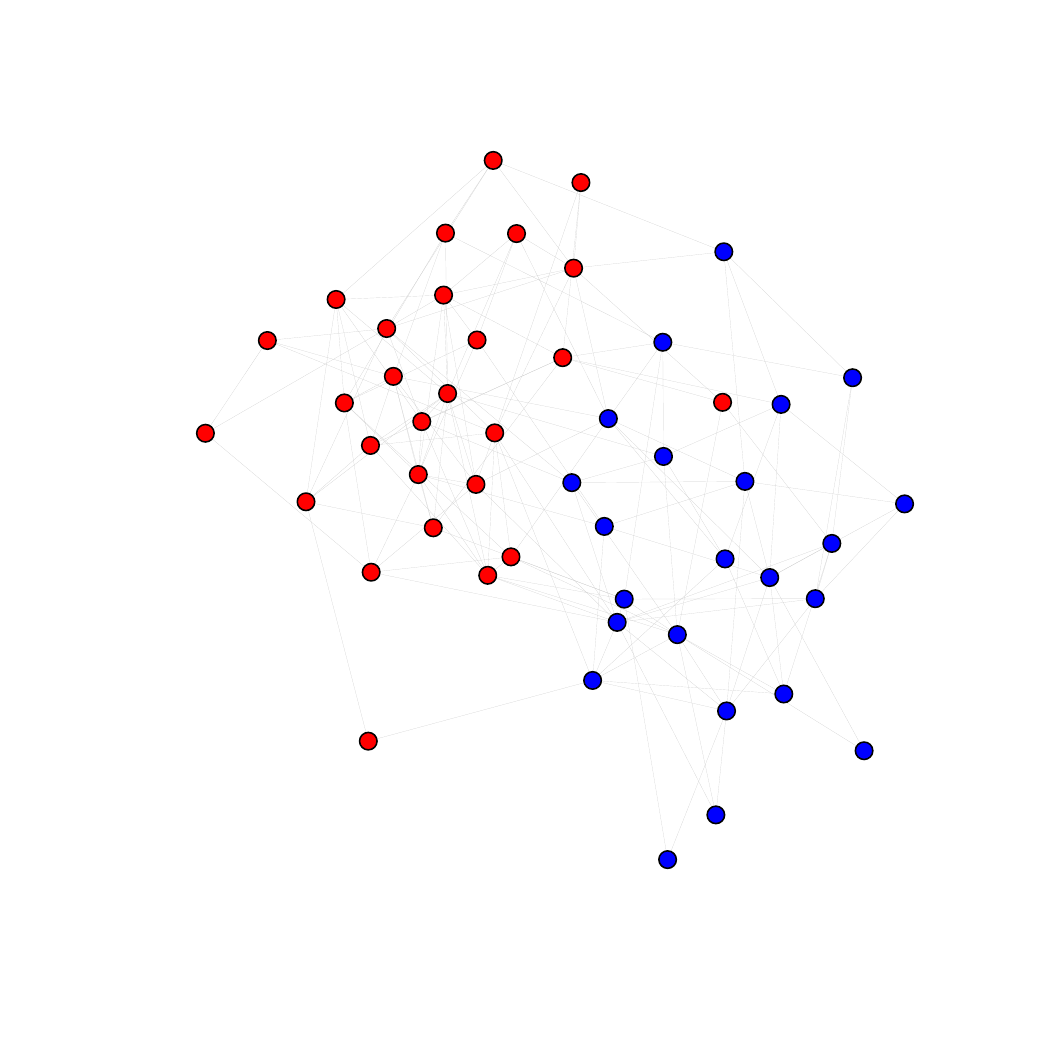}};
    \node[shape=rectangle, draw= green, draw opacity = .5, line width = .7mm, align = center] (Am1) at (3,-8) {\includegraphics[width = 1.4in, height = 1.5in]{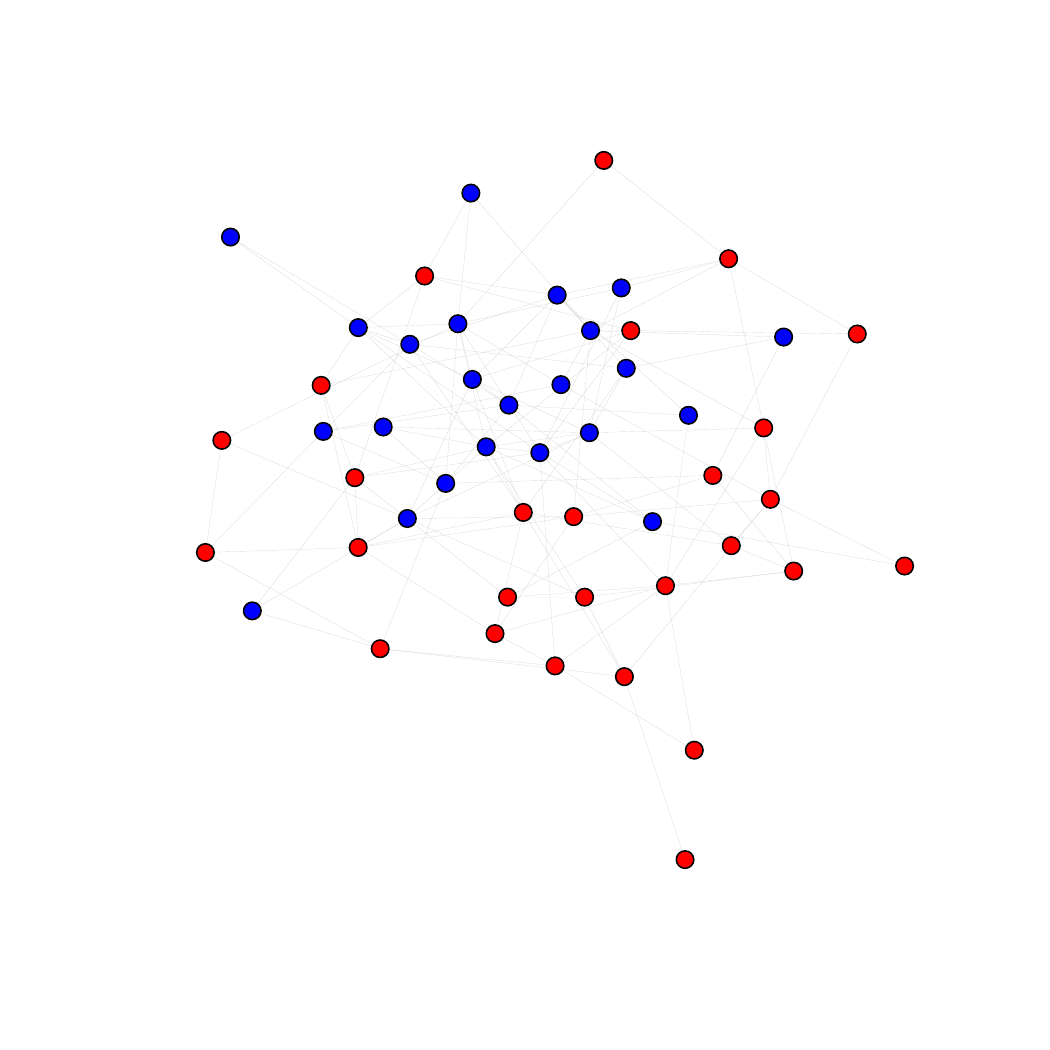}};
    \node[shape=rectangle, draw= green, draw opacity = .5, line width = .7mm, align = center] (Am) at (7,-8) {\includegraphics[width = 1.3in, height = 1.5in]{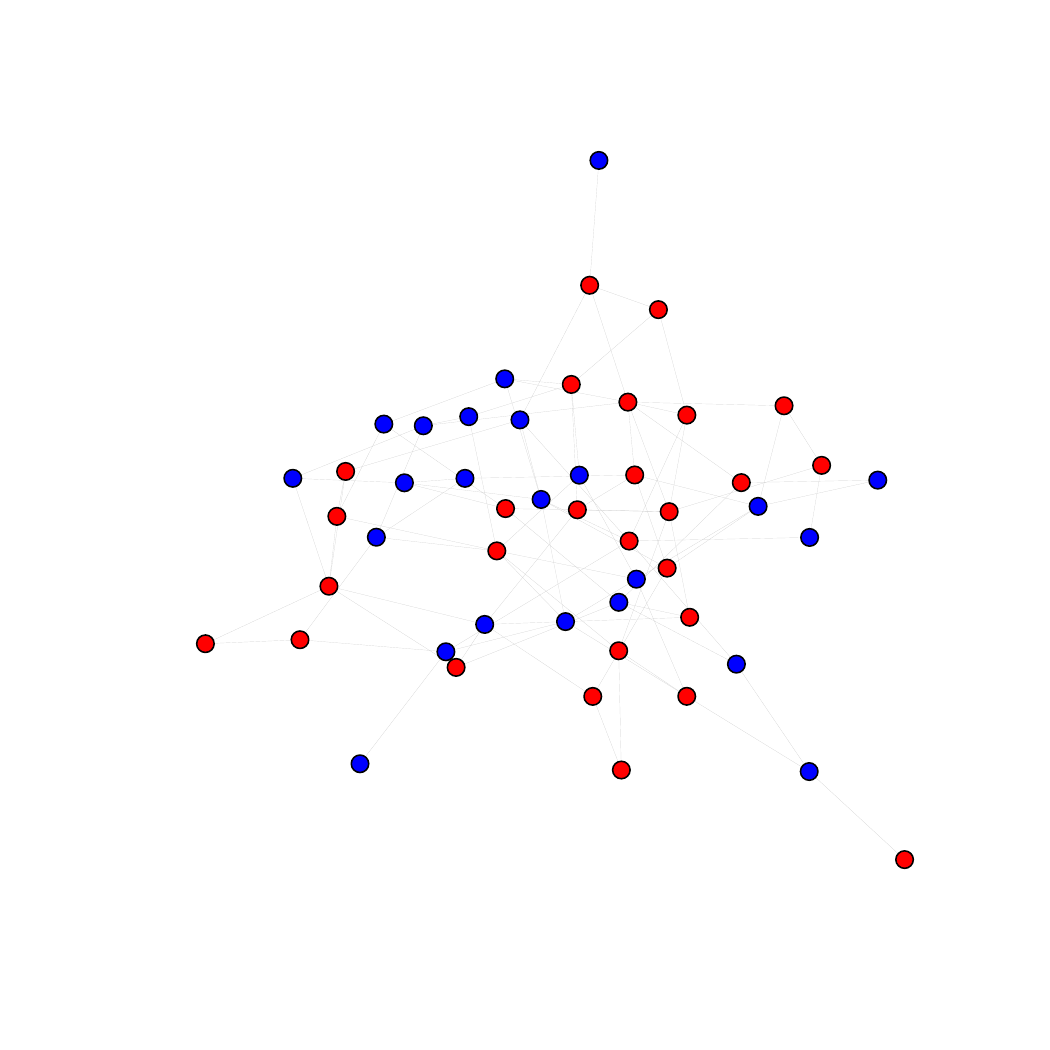}};
    
    \draw [loosely dotted] (A2) -- (Am1);
    \draw [->] (L1)--(A1);
    \draw [->] (L2)--(A2);
    \draw [->] (Lm1)--(Am1);
    \draw [->] (Lm)--(Am);
    
\end{tikzpicture}
    \caption{An illustration of the ESRPDG model's hierarchical structure. 
    Latent positions are drawn from a common distribution $F$. 
    From here each $\bvar{P}^{(g)}$ is defined as a function of the latent positions $\bvar{X}$ and weighting matrix $\bvar{C}^{(g)}$.  
    These scaled latent positions (yellow boxes) change the probability structure for each network.
    The random adjacency matrices (green boxes) are then drawn from $\bvar{A}^{(g)}\sim \text{Bern}(\bvar{P}^{(g)})$.}
    \label{fig:hierarchy_DSRDPG}
\end{adjustwidth}
\end{figure}

Heuristically, each dimension in the latent space can be interpreted as a principle direction capturing important features that govern the connectivity structure in the system. 
The weighting matrices can be seen as alternating the dot product that governs connectivity in each layer of the multiplex. 
Said another way, the matrix square root of the weighting matrices $\sqrt{\bvar{C}^{(g)}}$ can be interpreted as mapping the latent positions to new points in $\R^d$ where the standard dot product is applied. 

\begin{Remark}[Model Assumptions]\label{Remark:Model Asummptions}
In the definition of the ESRDPG, we impose assumptions on the distribution $F$ and weighting matrices $\{\bvar{C}^{(g)}\}_{g=1}^m$. 
Similar assumptions on $F$ are made in previous analysis \parencite{EigenCLT, OmniCLT} but have somewhat different implications here.
The assumptions $\mathrm{rank}(\Delta) = d$ and $\min_{i\in[d]}\max_{g\in[m]}\bvar{C}_{ii}^{(g)}>0$ fixes the dimension of the latent space, $d$.
Assuming $\Delta$ is diagonal enables analytic computation of the bias presented in Theorem \ref{Main Bias Theorem}.
Finally, assuming $\bvar{C}_{ii}^{(g)}\geq 0$ for all $i\in[d]$ and $g\in[m]$ is purely for notational convenience.
The results presented in Section~\ref{Section: Main Results} hold provided there does not exist $i\in[d]$ such that $\bvar{C}_{ii}^{(g)} = c \leq  0$ for all $g\in[m]$.
This condition ensures that the conditional expectation of the omnibus matrix $\tilde{\bvar{P}} = \E[\tilde{\bvar{A}}|\bvar{X}]$ has $d$ positive eigenvalues which allows us to focus on the positive definite part of $\tilde{\bvar{A}}$.
Relaxing the condition $\bvar{C}_{ii}^{(g)}\geq 0$ allows the ESRDPG to capture disassortative network structures and can be viewed as an extension of the Generalized Random Dot Product Graph (GRDPG) \parencite{rubindelanchy2017}, where considering both positive and negative eigenvalues is necessary.
We elect to exclude weighting matrices with negative values as it significantly complicates the presentation of the results without adding demonstrably new insights.
\qed
\end{Remark}

We note that the ESRPDG includes settings where $\bvar{C}_{ii}^{(g)} = 0$.
Therefore, if certain principle directions in graph $g$ are inconsequential in the connectivity structure of graph $k$, the ESRDPG can capture this relationship within a common probability model. 
By adjusting $d$ appropriately, the ESRDPG can capture a wide collection of conditionally independent RDPGs of differing dimension.
We explore some of these distributions in the experiments section of this work.

The ESRDPG is similar to other multiple random graph models that extend the RDPG to multiplex network data. 
Models that extend this paradigm include the Multiple Random Dot Product Graph (MRDPG) of \textcite{nielsen2018}, the Multiple Random Eigen Graphs (MREG) of \textcite{wang2017}, the Common Subspace Independent-Edge (COSIE) model of \textcite{arroyo2019}, and the Multilayer Random Dot Product Graph (Multilayer RDPG) of \textcite{jones2020multilayer}.
We summarize these models in Table~\ref{Submodel_Table}.

\begin{table}
    \centering
     \begin{tabular}{|c|c|c|}
    \hline 
    Model & Latent Positions $\bvar{X}^{(g)}$ & Source\\
    \hline\hline
    \rule{0pt}{2ex} MRDPG & $\bvar{U}\sqrt{\bvar{D}^{(g)}}$ & \textcite{nielsen2018} \\
    \rule{0pt}{2ex} MREG &  $\bvar{V}\sqrt{\bvar{D}^{(g)}}$ & \textcite{wang2017}\\
    \rule{0pt}{2ex} COSIE &  $\bvar{U}\sqrt{\bvar{R}^{(g)}}$ & \textcite{arroyo2019}\\
    \rule{0pt}{2ex} Multilayer RDPG & $\bvar{X}\sqrt{\bvar{R}^{(g)}}$ & \textcite{jones2020multilayer}\\  
    \hline
    \end{tabular}
    \caption{A summary of the multiplex network probability models that extend the RDPG. 
    Global structure is encoded in the matrices $\bvar{U}$, $\bvar{V}$, or $\bvar{X}$ where $\bvar{U}$ is matrix with orthogonal columns, $\bvar{V}$ has columns with unit Euclidean norm, and $\bvar{X}$ is a latent position matrix. 
    Layer variation is then encoded in the matrix square root of either the diagonal matrix $\bvar{D}^{(g)}$ or p.s.d.\ matrix $\bvar{R}^{(g)}$.}
    \label{Submodel_Table}
\end{table}

Notice that each method assumes the matrices $\{\bvar{P}^{(g)}\}_{g=1}^m$ share common structure through a vertex specific latent structure.
Each layer then has a different inner product weighted by the square root of a weighting matrix. 
The MRDPG and MREG models are similar to the ESRDPG as they adopt a latent position structure while layer-variation is captured by diagonal scaling of these positions. 
The Multilayer RDPG and the COSIE models adopt a similar latent position structure while layer-variation is instead captured by a linear transformation of the latent positions.
The MRDPG and MREG differ from the ESRDPG in the sampling procedure of the latent positions.
The ESRDPG and Multilayer RDPG both assume the rows of $\bvar{X}$ are drawn i.i.d.\ from a common distribution while the MRDPG, MREG, and COSIE models make no such assumption about the rows of $\bvar{U}$ or $\bvar{V}$.
These models instead, implicitly, make finite sample assumptions about the matrices $\{\bvar{P}^{(g)}\}_{g=1}^m$.
Of these models, the ESRDPG share traits with the MRDPG and MREG, as these methods capture layer-variation through diagonal scaling of the latent positions, and the Multilayer RDPG, as both methods assume the latent positions are exchangeable. 


\begin{Remark}[Identifiability]\label{Identifiability}
In most latent position models, the model parameters are typically identifiable only within a larger equivalence class. 
Of note, the latent positions of the RDPG are only identifiable up to an orthogonal transformation as the edge probabilities are invariant under such transformation.
Similar issues arise for the ESRDPG though the common structure among the graphs somewhat limits the non-identifiability.
Generally, the latent positions and weighting matrices are only identifiable within the equivalence class 
\begin{align*}
    (\mathcal{X}, \mathcal{C}) &= \{(\bvar{XM}, \{\bvar{M}^{-1}\bvar{C}^{(g)}\bvar{M}^{-T}\}_{g=1}^m): \bvar{M}\in\mathcal{M}, \bvar{X}\sim F\}.
\end{align*}
where $\mathcal{M}$ is the subset of $d\times d$ invertible matrices such that $\bvar{M}^T\Delta\bvar{M}$ and $\{\bvar{M}^{-1}\bvar{C}^{(g)}\bvar{M}^{-T}\}_{g=1}^m$ are diagonal and can be characterized more precisely by considering the co-multiplicities of the diagonal values of $\Delta$ and $\{\Delta\bvar{C}^{(g)}\}_{g=1}^m$.
We circumvent several of these identifiability issues by instead regarding the products $\bvar{X}\sqrt{\bvar{C}^{(g)}}$ for all $g\in[m]$ as the parameters of interest. 
By changing the estimand we introduce a new, but more familiar, form of indentifiability issue. 
Namely, for all $g\in[m]$ for any $\bvar{W}\in\mathcal{O}^{(d)}$
\begin{align*}
    \bvar{P}^{(g)} = (\bvar{X}\sqrt{\bvar{C}^{(g)}})(\bvar{X}\sqrt{\bvar{C}^{(g)}})^T = (\bvar{X}\sqrt{\bvar{C}^{(g)}}\bvar{W})(\bvar{X}\sqrt{\bvar{C}^{(g)}}\bvar{W})^T.
\end{align*}
Letting $\bvar{L} = [(\bvar{X}\sqrt{\bvar{C}^{(1)}})^T\dots (\bvar{X}\sqrt{\bvar{C}^{(m)}})^T]^T\in\R^{nm\times d}$ be the block matrix with $m$, $n\times d$ blocks given by $\bvar{X}\sqrt{\bvar{C}^{(g)}}$, the weighted latent positions are then only identifiable within the equivalence class $\mathcal{L}$ given by
\begin{align*}
    \mathcal{L} &= \{\bvar{LW}: \bvar{W}\in\mathcal{W}\}\\
    \mathcal{W} &= \{\bvar{W}\in\mathcal{O}^{(d)}: \bvar{W}^T\bvar{C}^{(g)}\Delta\bvar{W}\text{ is diagonal for all $g\in[m]$}\}.
\end{align*}
We note the size of $\mathcal{W}\subseteq\mathcal{O}^{(d)}$ is determined by the matrices $\bvar{C}^{(g)}\Delta$.
Assuming $\Delta = \delta\bvar{I}$ and $\bvar{C}^{(g)} = \bvar{I}$ for all $g\in[m]$, $\mathcal{W} = \mathcal{O}^{(d)}$.
However, if $\bvar{C}^{(g)}\Delta$ is full rank with unique elements, then $\mathcal{W} = \{\text{diag}(\bvar{w}): \bvar{w}\in\{\pm 1\}^d\}$ and hence finite. 
\qed
\end{Remark}

Given these identifiability considerations, we now focus on analyzing the omnibus embedding's ability to estimate the rows of $\bvar{L}$ up to orthogonal rotation.
We conclude this section with organizing the notation used throughout the course of this paper in Table~\ref{tab:notation_table}.

\begin{table}[h!]
    \footnotesize
    \centering
    \begin{tabular}{|l|p{.6\textwidth}|}
    \hline
     \textbf{Symbol} & \textbf{Definition}\\
     \hline 
      $(n, m, d)$ & Number of vertices, graphs, and embedding dimension, respectively\\
      $\bvar{M}_i = (\bvar{M}_{i\cdot})^T$ & The $i$-th row of $\bvar{M}$, transposed\\
      $\{E_n\}_{n=1}^{\infty}$  occurs w.h.p. & Events $\{E_n\}_{n=1}^{\infty}$ occurs with high probability if $\prob(E_n^C)\leq (nm)^{-2}$\\
      $\|\cdot\|$\quad$\|\cdot\|_F$ & Euclidean/spectral norm for vectors/matrices, the Frobenius norm\\
      $\mathcal{O}^{(d)}$ & The set of $d\times d$ orthogonal matrices\\
      $C>0$ & A constant that may vary from line to line\\
     $\lambda_i(\bvar{M})$ & The $i$-th largest eigenvalue of $\bvar{M}$\\
     $\bvar{X}\in\R^{n\times d}$, $\bvar{C}^{(g)}\in\R^{d\times d}$ & The latent position matrix and the weighting matrix for each $g\in[m]$\\
     $\bvar{A}^{(g)},\bvar{P}^{(g)}\in\R^{n\times n}$ & The adjacency and its (conditional) expectation $\bvar{P}^{(g)} = \bvar{XC}^{(g)}\bvar{X}^T$\\
     $\tilde{\bvar{A}}, \tilde{\bvar{P}}\in\R^{nm\times nm}$ & The omnibus matrix of $\{\bvar{A}^{(g)}\}_{g=1}^m$, $\{\bvar{P}^{(g)}\}_{g=1}^m$, respectively\\
     $\hat{\bvar{X}}^{(g)}_i\in\R^{n\times d}$ & Omnibus latent position estimate for vertex $i\in[n]$ in graph $g\in[m]$\\
     $\bar{\bvar{X}}_i = m^{-1}\sum_{g=1}^m\hat{\bvar{X}}_i^{(g)}\in\R^{n\times d}$ & The average omnibus latent position estimate for vertex $i\in[n]$ \\
     $\bvar{C}_m = m^{-1}\sum_{g=1}^m\bvar{C}^{(g)2}$ & The sums of squares of the weighting matrices $\{\bvar{C}^{(g)}\}_{g=1}^m$\\
     $\bvar{S}^{(g)} = 2^{-1}[\bvar{C}^{(g)}\bvar{C}_m^{-1/4} + \bvar{C}_m^{1/4}]$ & The scaling matrix for graph $g\in[m]$\\
     $\bvar{L}\in\R^{nm\times d}$ & The weighted latent position matrix with blocks $\bvar{L} = [\bvar{X}\sqrt{\bvar{C}^{(g)}}]_{g=1}^m$\\
     $\bvar{L}_S\in\R^{nm\times d}$ & The scaled latent position matrix with blocks $\bvar{L}_S = [\bvar{X}\bvar{S}^{(g)}]_{g=1}^m$\\
        \hline  
    \end{tabular}
    \caption{Notation used consistently throughout the presentation of the text. }
    \label{tab:notation_table}
\end{table}

\section{Theoretical Results}\label{Section: Main Results}
In this section we present the main theoretical analysis of the omnibus embedding under the ESRDPG.
We begin by stating Theorem~\ref{Main Bias Theorem} and Theorem~\ref{CLT} which establish the first and second moment properties of the omnibus embedding under the ESRDPG. 
We highlight these results in a simulation study that shows the eminent features of the analysis. 
We then aim to interpret the ramifications of these theorems by completing a bias-variance analysis of similar estimators and stating corollaries useful in statistical applications to follow.

\subsection{Main Results}\label{Subsec:Main_Results}

In this section we reveal a bias-variance tradeoff in the estimated latent positions produced by the omnibus embedding.
Let $\hat{\bvar{L}} = \text{Omni}(\{\bvar{A}^{(g)}\}_{g=1}^m, d)$ and $\bvar{L}_S$ and $\bvar{L}$ be as in Table~\ref{tab:notation_table}.
Then, for $\bvar{W}_1\in\mathcal{O}^{(d)}$, $\hat{\bvar{L}}$ emits the first order decomposition
\begin{equation}
    \hat{\bvar{L}}\bvar{W}_1 - \bvar{L} = (\bvar{L}_S- \bvar{L}) + \bvar{R}.
\end{equation}
In Theorem~\ref{Main Bias Theorem} we establish that the omnibus embedding provides biased latent position estimates and this bias is captured by $(\bvar{L}_S- \bvar{L})$.
Theorem~\ref{Main Bias Theorem} also provides a uniform bound on the residual term $\bvar{R}$, supporting this result. 

\begin{theorem}\label{Main Bias Theorem}
Suppose that $(\{\bvar{A}^{(g)}\}_{g=1}^m, \bvar{X})\sim \text{ESRDPG}(F,n,\{\bvar{C}^{(g)}\}_{g=1}^m)$ for some $d$-dimensional inner product distribution $F$. 
Let $\hat{\bvar{L}} = \text{Omni}\left(\{\bvar{A}^{(g)}\}_{g=1}^m, d\right)$ and $\bvar{L}$ be given as above. 
Let $h = n(g-1) + i$ for some $g\in[m]$ and $i\in[n]$.
Then there exists a sequence of orthogonal matrices $\{\tilde{\bvar{W}}_n\}_{n=1}^{\infty}$ depending on $\tilde{\bvar{A}}$ and $\tilde{\bvar{P}}$ such that
\begin{equation}
    (\hat{\bvar{L}}\tilde{\bvar{W}}_n - \bvar{L})_h = (\bvar{S}^{(g)} - \sqrt{\bvar{C}^{(g)}})\bvar{X}_i + \bvar{R}_h
\end{equation}
where $\bvar{S}^{(g)} = 2^{-1}[\bvar{C}^{(g)}\bvar{C}_m^{-1/4} +\bvar{C}_m^{1/4}]$
for $\bvar{C}_m = m^{-1}\sum_{g=1}^m\bvar{C}^{(g)2}$.
Moreover, $\bvar{R}_h$ is a residual term that with high probability satisfies
\begin{equation}
    \max_{h\in[nm]}\|\bvar{R}_h\|_2 \leq Cm^{3/2}\frac{\log nm}{\sqrt{n}}. 
\end{equation}
\end{theorem}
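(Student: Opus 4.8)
The plan is to decompose the error into a deterministic \emph{bias} piece and a stochastic \emph{residual} piece, which reduces (A) to a definition and concentrates all the work on the uniform bound (B). Writing $\bvar{P}^{(g)} = \bvar{X}\bvar{C}^{(g)}\bvar{X}^T$ and letting $\tilde{\bvar{P}}$ denote the omnibus matrix of $\{\bvar{P}^{(g)}\}_{g=1}^m$, I take $\bvar{L}_S$ as in Definition~\ref{Bias Matrices} and observe that its $h$-th row ($h = n(g-1)+i$) is $\bvar{S}^{(g)}\bvar{X}_i$, so that $(\bvar{L}_S - \bvar{L})_h = (\bvar{S}^{(g)} - \sqrt{\bvar{C}^{(g)}})\bvar{X}_i$ is exactly the claimed bias. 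I then \emph{define} $\bvar{R}_h := (\hat{\bvar{L}}\tilde{\bvar{W}}_n - \bvar{L}_S)_h$, so (A) holds identically, and the entire content of the theorem is the statement $\|\hat{\bvar{L}}\tilde{\bvar{W}}_n - \bvar{L}_S\|_{2\to\infty} = O(m^{3/2}\log(nm)/\sqrt{n})$ with high probability. I split this two-to-infinity distance through the intermediate noiseless embedding $\bvar{U}_{\tilde{\bvar{P}}}\bvar{S}_{\tilde{\bvar{P}}}^{1/2}$.

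The first step is to pin down the eigenstructure of $\tilde{\bvar{P}}$ and show its rank-$d$ embedding equals $\bvar{L}_S$ up to rotation, with controlled error. Factoring $\tilde{\bvar{P}} = (\bvar{I}_m \otimes \bvar{X})\,\bvar{M}\,(\bvar{I}_m \otimes \bvar{X})^T$, where the $(g,k)$ block of $\bvar{M}$ is the diagonal matrix $\tfrac12(\bvar{C}^{(g)} + \bvar{C}^{(k)})$, the nonzero spectrum of $\tilde{\bvar{P}}$ coincides with that of $\bvar{M}(\bvar{I}_m \otimes \bvar{X}^T\bvar{X})$. Because $\Delta$ is diagonal, $\tfrac1n \bvar{X}^T\bvar{X} \to \Delta$ with off-diagonal entries vanishing at rate $O(\sqrt{\log n / n})$ with high probability, so after grouping coordinates by latent dimension this core matrix is, up to that error, block-diagonal with $d$ blocks $n\Delta_{ii}\bvar{H}(\bvar{v}_i)$. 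Each $\bvar{H}(\bvar{v}_i)$ is a nonnegative rank-$\le 2$ matrix, so Perron-Frobenius (Proposition~\ref{one-to-one-uniqueness}) selects its positive leading eigenvector $\alpha^{(i)}$, and extracting the leading eigenvector of $\tilde{\bvar{P}}$ in dimension $i$ reproduces exactly the scaling $\bvar{S}^{(g)}$ within block $g$. A Davis-Kahan / Weyl argument then yields an orthogonal $\bvar{W}_0$ with $\|\bvar{U}_{\tilde{\bvar{P}}}\bvar{S}_{\tilde{\bvar{P}}}^{1/2} - \bvar{L}_S\bvar{W}_0\|_{2\to\infty} = O(\sqrt{\log n/n})$; here one must handle eigenvalue multiplicities (the co-multiplicities of $\Delta\bvar{C}^{(g)}$ from Remark~\ref{Identifiability}) by working within the appropriate invariant subspaces.

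The second and main step is the stochastic perturbation bound comparing $\hat{\bvar{L}} = \bvar{U}_{\tilde{\bvar{A}}}\bvar{S}_{\tilde{\bvar{A}}}^{1/2}$ to $\bvar{U}_{\tilde{\bvar{P}}}\bvar{S}_{\tilde{\bvar{P}}}^{1/2}$ in two-to-infinity norm. Setting $\tilde{\bvar{E}} = \tilde{\bvar{A}} - \tilde{\bvar{P}}$, I would first record the structural identity $\tilde{\bvar{E}} = \tfrac12\big(\bvar{D}(\bvar{1}_m^T \otimes \bvar{I}_n) + (\bvar{1}_m \otimes \bvar{I}_n)\bvar{D}^T\big)$, with $\bvar{D}$ the vertical stack of $\bvar{E}^{(g)} = \bvar{A}^{(g)} - \bvar{P}^{(g)}$, which together with $\|\bvar{E}^{(g)}\| = O(\sqrt{n})$ with high probability yields $\|\tilde{\bvar{E}}\| = O(m\sqrt{n})$, while $\lambda_d(\tilde{\bvar{P}}) = \Theta(nm)$ supplies the eigengap. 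The workhorse is the standard first-order expansion
\begin{equation*}
\bvar{U}_{\tilde{\bvar{A}}}\bvar{S}_{\tilde{\bvar{A}}}^{1/2}\bvar{W}^* - \bvar{U}_{\tilde{\bvar{P}}}\bvar{S}_{\tilde{\bvar{P}}}^{1/2} = \tilde{\bvar{E}}\,\bvar{U}_{\tilde{\bvar{P}}}\bvar{S}_{\tilde{\bvar{P}}}^{-1/2} + \bvar{R}',
\end{equation*}
obtained through a Davis-Kahan alignment rotation $\bvar{W}^*$. The leading term has rows that are sums of conditionally independent, bounded, centered contributions, so a Hoeffding/Bernstein bound plus a union bound over the $nm$ rows produces the $\log(nm)/\sqrt{n}$ rate; the residual $\bvar{R}'$ collects the higher-order products, which I would bound using $\|\tilde{\bvar{E}}\|$, the $\sin\Theta$ bound on $\|\bvar{U}_{\tilde{\bvar{A}}} - \bvar{U}_{\tilde{\bvar{P}}}\bvar{W}^*\|$, and $\|\bvar{U}_{\tilde{\bvar{P}}}\|_{2\to\infty} = O(1/\sqrt{n})$, giving a strictly smaller order. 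Composing $\bvar{W}_0$ with $\bvar{W}^*$ into $\tilde{\bvar{W}}_n$ and combining the two steps gives (B).

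I expect the main obstacle to be the uniform two-to-infinity control of the higher-order residual $\bvar{R}'$ in the second step while correctly tracking the dependence on $m$: the omnibus noise $\tilde{\bvar{E}}$ is neither independent across blocks nor low rank, so its operator norm, the induced eigenvector perturbation, and the row-wise concentration must all be estimated through its structured decomposition, and it is these estimates that generate the $m^{3/2}$ factor. A secondary difficulty is the possible presence of repeated eigenvalues in $\tilde{\bvar{P}}$ (governed by the co-multiplicities of $\Delta\bvar{C}^{(g)}$), which forces the alignment arguments in both steps to be carried out within invariant subspaces rather than coordinate by coordinate.
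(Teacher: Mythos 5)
Your overall architecture matches the paper's: define $\bvar{R}_h = (\hat{\bvar{L}}\tilde{\bvar{W}}_n - \bvar{L}_S)_h$ so that (A) is immediate from the definition of $\bvar{L}_S$, then prove a uniform $2\to\infty$ bound by passing through the noiseless embedding of $\tilde{\bvar{P}}$ and using the standard first-order expansion $\tilde{\bvar{E}}\bvar{U}_{\tilde{\bvar{P}}}\bvar{S}_{\tilde{\bvar{P}}}^{-1/2}$ plus Hoeffding, union bounds, and Davis--Kahan for the higher-order terms. However, your first step takes a genuinely different and weaker route than the paper, and it leaves a real gap. The paper does \emph{not} show that $\text{ASE}(\tilde{\bvar{P}},d)$ is approximately $\bvar{L}_S$; it shows the identity is \emph{exact} for every finite $n$: writing $\tilde{\bvar{P}} = \sum_{j=1}^d \bvar{H}(\bvar{v}_j)\otimes \bvar{X}_{\cdot j}\bvar{X}_{\cdot j}^T$ and splitting each rank-two $\bvar{H}(\bvar{v}_j)$ into its positive and negative semidefinite parts, one finds $\bvar{S}^{(k)}\bvar{S}^{(\ell)} = \text{diag}\bigl([\bvar{H}^{+}(\bvar{v}_r)]_{k\ell}\bigr)_{r=1}^d$ exactly, hence $\bvar{L}_S\bvar{L}_S^T = \tilde{\bvar{P}}^{+} = \bvar{Z}\bvar{Z}^T$ and $\bvar{Z} = \bvar{L}_S\tilde{\bvar{W}}$ for an orthogonal $\tilde{\bvar{W}}$, with no appeal to $\tfrac1n\bvar{X}^T\bvar{X}\to\Delta$. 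Your asymptotic block-diagonalization instead requires a $2\to\infty$ eigenvector perturbation bound for the deterministic matrix $\tilde{\bvar{P}}$ against its ``block-diagonal-core'' surrogate; Davis--Kahan and Weyl give spectral-norm control of invariant subspaces, not row-wise control of the scaled eigenvectors, and you do not say how that row-wise bound would be obtained (nor how the eigenvalue co-multiplicities would be resolved without the exact factorization). This is an unnecessary and unaddressed technical burden, since the exact identity is available.

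The second, sharper problem is your assertion that $\lambda_d(\tilde{\bvar{P}}) = \Theta(nm)$. Under the ESRDPG the entries $\bvar{C}_{ii}^{(g)}$ are permitted to vanish for all but one $g\in[m]$, in which case $\lambda_{\max}(\bvar{H}(\bvar{v}_i)) = \tfrac12(\|\bvar{v}_i\|_1 + \sqrt{m}\|\bvar{v}_i\|_2)$ is only of order $\sqrt{m}$, and the correct guaranteed lower bound is $\lambda_d(\tilde{\bvar{P}}) \geq C\delta n\sqrt{m}$ (the paper's Lemma on eigenvalue order). This weaker eigengap, combined with $\|\tilde{\bvar{A}}-\tilde{\bvar{P}}\| \leq Cm\sqrt{n\log nm}$, is precisely what produces the $m^{3/2}$ factor in (B), in contrast to the $m^{1/2}$ rate of the i.i.d.\ setting. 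With your claimed $\Theta(nm)$ eigengap the $m$-dependence of the residual bounds would come out too favorable, so your bookkeeping would not actually derive the stated rate for the right reason; you attribute the $m^{3/2}$ to the structure of $\tilde{\bvar{E}}$ alone, when it in fact arises from the possible degeneracy of the weighting matrices through the eigenvalue lower bound. The remainder of your second step (the six-term residual decomposition, Hoeffding plus union bound over the $nm$ rows for the leading term, and the treatment of the negative-definite part of $\tilde{\bvar{P}}$) is consistent with the paper once these two points are repaired.
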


\begin{proof}
The proof can be found in Appendix~\ref{First Moment Appendix}. 
\end{proof}

The essence of this result is that the estimated latent positions $\hat{\bvar{L}}$ do not concentrate around $\bvar{L} = [\bvar{X}\sqrt{\bvar{C}^{(g)}}]_{g=1}^m$ but instead the scaled latent positions $\bvar{L}_S = [\bvar{X}\bvar{S}^{(g)}]_{g=1}^m$.
Therefore, the omnibus embedding introduces a row-wise asymptotic bias of $(\bvar{S}^{(g)} - \sqrt{\bvar{C}^{(g)}})\bvar{X}_i$.
The second portion of the theorem provides a uniform rate for this concentration.
The rate $O(m^{3/2}n^{-1/2}\log nm)$ is reminiscent of that given in \textcite{OmniCLT} with an additional factor of $m$ due to permitting $\bvar{C}_{ii}^{(g)} = 0$ for all but one $g\in[m]$.
If the number of nonzero $\{\bvar{C}_{ii}^{(g)}\}_{g=1}^m$ grows like $\Theta(m)$ then the bound is instead $O(m^{1/2}n^{-1/2}\log nm)$ consistent with the rate in \textcite{OmniCLT}.

The scaled latent positions $\bvar{X}\bvar{S}^{(g)} = 2^{-1}(\bvar{X}\bvar{C}^{(g)}\bvar{C}_m^{-1/4} + \bvar{X}\bvar{C}_{m}^{1/4})$
can be interpreted as an average between the term $\bvar{X}\bvar{C}^{(g)}\bvar{C}_m^{-1/4}$ which captures layer specific variation and the term $\bvar{XC}_m^{1/4}$ which is common for all $g\in[m]$. 
The term $\bvar{XC}_m^{1/4}$ can be interpreted as a regularization term which captures average graph behavior.
In particular, notice
\begin{align*}
    (\bvar{XC}_m^{1/4})(\bvar{XC}_m^{1/4})^T = \bvar{X}\sqrt{\frac{1}{m}\sum_{g=1}^m(\bvar{C}^{(g)})^2}\bvar{X}^T
\end{align*}
which implies $\bvar{XC}_m^{1/4}$ is the adjacency spectral embedding of a graph whose weighting matrix is given by $\bvar{C}_m^{1/2} = \sqrt{m^{-1}\sum_{g=1}^m(\bvar{C}^{(g)})^2}$ which is an $\ell_2$ average of the weighting matrices $\{\bvar{C}^{(g)}\}_{g=1}^m$.
Therefore, the omnibus embedding implicitly regularizes its latent position estimates and this regularization manifests in the omnibus latent position estimates reflecting both layer specific variation and structures shared across all graphs $g\in[m]$.

In order to more precisely characterize the residual term $\bvar{R}$, we look to establish its distributional properties. 
Heuristically, we anticipate the omnibus embedding of $\{\bvar{A}^{(g)}\}_{g=1}^m$ will concentrate around the omnibus embedding of $\{\bvar{P}^{(g)}\}_{g=1}^m$, conditional on $\bvar{X}$.
As each $\bvar{P}^{(g)}$ is a function of the random matrix $\bvar{X}$, the omnibus embedding of $\{\bvar{P}^{(g)}\}_{g=1}^m$ is itself random for finite $n$.
We anticipate the convergence of the omnibus embedding of $\{\bvar{P}^{(g)}\}_{g=1}^m$ to depend on the convergence of the sample second moment matrix $n^{-1}\bvar{X}^T\bvar{X}$ to its limit $\Delta = \E[\bvar{X}_1\bvar{X}_1^T]$.
These two sources of variability inspire the decomposition of $\bvar{R}$ in  equation~\eqref{eq:second_moment_decomp}.
Let $\bvar{Z} = \text{Omni}(\{\bvar{P}^{(g)}\}_{g=1}^m, d)$ and let  $\bvar{W}_2\in\mathcal{O}^{(d)}$.
Then we further expand $\bvar{R}$ as 
\begin{equation}\label{eq:second_moment_decomp}
    \bvar{R} = (\hat{\bvar{L}}\bvar{W}_1 - \bvar{Z}\bvar{W}_2) + (\bvar{Z}\bvar{W}_2 - \bvar{L}_S) =: \bvar{N} + \bvar{M}. 
\end{equation}
$\bvar{N}$ describes variation between the adjacencies and their conditional expectation $\{\bvar{A}^{(g)} - \bvar{P}^{(g)}\}_{g=1}^m$ while $\bvar{M}$ describes deviation between the sample second moment matrix and its expectation $n^{-1}\bvar{X}^T\bvar{X} - \Delta$. 
As a result $\bvar{N}$ and $\bvar{M}$ are conditionally independent given $\bvar{X}$. 
We first establish the distributional properties of $\bvar{N}$ in Lemma~\ref{Lemma:N-Dist} which shows that the scaled rows of $\sqrt{n}\bvar{N}$ converge in distribution to a mixture of mean zero normal random variables.

\begin{Lemma}\label{Lemma:N-Dist}
Suppose that $(\{\bvar{A}^{(g)}\}_{g=1}^m, \bvar{X})\sim \text{ESRDPG}(F,n,\{\bvar{C}^{(g)}\}_{g=1}^m)$ for some $d$-dimensional inner product distribution $F$.  
Let $\Delta_S = \sum_{g=1}^m\bvar{S}^{(g)2}\Delta$ where $\Delta = \E[\bvar{X}_1\bvar{X}_1^T]$ and $h = n(g-1) + i$ where $g\in[m]$ and $i\in[n]$.
Then in the context of Theorem~\ref{Main Bias Theorem}, we have
\begin{align*}
    \lim_{n\to\infty}\prob\left[\sqrt{n}\bvar{N}_h \leq \bvar{x}\right] &= \int_{\text{supp}(F)}\Phi(\bvar{x}; \bvar{0}, \Sigma_g^{(N)}(\bvar{y}))dF(\bvar{y})
\end{align*}
where $\Phi(\bvar{x}; \mu, \Sigma)$ is the multivariate normal cumulative distribution function with mean $\mu$ and covariance matrix $\Sigma$. 
Moreover, the covariance $\Sigma_g(\bvar{y})$ can be written explicitly as  
\begin{align*}
    \Sigma_g^{(N)}(\bvar{y}) &=\frac{1}{4} \Delta_S^{-1}\left[(\bvar{S}^{(g)} + m\bar{\bvar{S}})\tilde{\Sigma}_g(\bvar{y})(\bvar{S}^{(g)} + m\bar{\bvar{S}}) + \sum_{k\neq g}\bvar{S}^{(k)}\tilde{\Sigma}_k(\bvar{y})\bvar{S}^{(k)}\right]\Delta_S^{-1}
\end{align*}
where $\tilde{\Sigma}_{\ell}(\bvar{y}) = \E\left[(\bvar{y}^T\bvar{C}^{(\ell)}\bvar{X}_j - (\bvar{y}^T\bvar{C}^{(\ell)}\bvar{X}_j)^2)\bvar{X}_j\bvar{X}_j^T\right]$.
\end{Lemma}
\begin{proof}
The proof can be found in the Appendix \ref{Second Moment Appendix}. 
\end{proof}

Similar results were achieved in \textcite{EigenCLT} and \textcite{OmniCLT} using a combination of perturbation arguments. 
These arguments largely rely on studying differences in eigenstructure between $\tilde{\bvar{A}}$ and its expectation $\tilde{\bvar{P}} = \E[\tilde{\bvar{A}}|\bvar{X}]$.
In \textcite{OmniCLT}, under the assumption that the $m$ graphs have the same expectation, $\tilde{\bvar{P}}$ is p.s.d.\ and the rank of this matrix was equal to that of the latent space, i.e. $\text{rank}(\tilde{\bvar{P}}) = d$.
Under the ESRDPG, $\tilde{\bvar{P}}$ is indefinite with $q\in[0, d]$ negative eigenvalues which complicates the analysis. 
Utilizing analysis techniques introduced in \textcite{rubindelanchy2017} and extended in \textcite{agterberg2020nonparametric} and \textcite{chung2021valid}, we are able to relate the eigen-structure of these matrices to the model parameters which facilitate the analysis. 

Establishing the distributional properties of $\bvar{M}$ is a more complicated undertaking. 
Arguments presented Appendix~\ref{Second Moment Appendix} reveal that the rows of $\bvar{M}$ can be written as $\bvar{M}^{(g)}\bvar{X}_i$, the elements of $\sqrt{n}\bvar{M}^{(g)}$ are asymptotically normally distributed with mean zero, and the covariance of $\sqrt{n}\text{vec}(\bvar{M}^{(g)})$ is degenerate with rank $d(d+1)/2$.
As a result, the scaled rows $\sqrt{n}\bvar{M}^{(g)}\bvar{X}_i$ are asymptotically normally distributed with mean zero constrained to a subspace of $\R^d$.
When $d = 1$, $\bvar{M}$ vanishes, for $d = 2$, $\bvar{M}$ is constrained to a one dimensional subspace of $\R^d$, and for $d >2$, $\bvar{M}$ is full rank.
We explore this subspace restriction in Example~\ref{ex:two-group-running-ex}.

As the rows of $\bvar{M}$ can be written as the linear transformation $\bvar{M}^{(g)}\bvar{X}_i$, $\hat{\bvar{L}}\bvar{W}_1 - \bvar{L}$ can be decomposed as 
\begin{align*}
    (\hat{\bvar{L}}\bvar{W}_1 - \bvar{L})_h = (\bvar{S}^{(g)} + \bvar{M}^{(g)} - \sqrt{\bvar{C}^{(g)}})\bvar{X}_i + \bvar{N}_h.
\end{align*}
This decomposition of can be thought of in two parts.
First, $(\bvar{S}^{(g)} + \bvar{M}^{(g)})\bvar{X}_i$ can be thought of as a noisy bias term where $\bvar{M}^{(g)}$ introduces a mean zero translation of the biased scaling $\bvar{S}^{(g)}$. 
As this noise is a linear transformation in the direction of $\bvar{X}_i$ is it often immaterial for subsequent inference (see Example~\ref{ex:two-group-running-ex}).
Second, the rows of $\bvar{N}$ converge to a mixture of mean zero random variables. 
Combining these results leads to our distributional characterization of $\bvar{R}$ in Theorem~\ref{CLT}. 

\begin{theorem}\label{CLT}
Suppose that $(\{\bvar{A}^{(g)}\}_{g=1}^m, \bvar{X})\sim \text{ESRDPG}(F,n,\{\bvar{C}^{(g)}\}_{g=1}^m)$ for some $d$-dimensional inner product distribution $F$.  
Then in the context of Theorem~\ref{Main Bias Theorem}, for some $h = n(g-1) + i$ where $i\in[n]$ and $g\in[m]$, we have 
\begin{align*}
    \lim_{n\to\infty}\prob\left[\sqrt{n}\bvar{R}_h \leq \bvar{x}\right] &= \int_{\text{supp}(F)}\Phi(\bvar{x}; \bvar{0}, \Sigma_g(\bvar{y}))dF(\bvar{y})
\end{align*}
where $\Phi(\bvar{x}; \mu, \Sigma)$ is the multivariate normal cumulative distribution function with mean $\mu$ and covariance matrix $\Sigma$. 
Moreover, the covariance $\Sigma_g(\bvar{y})$ can be decomposed as 
\begin{align*}
 \Sigma_g(\bvar{y}) =  \Sigma_g^{(N)}(\bvar{y}) + \Sigma_g^{(M)}(\bvar{y}) + \Sigma_g^{(N,M)}(\bvar{y})
\end{align*}
where $\Sigma_g^{(N)}(\bvar{y})$ is given in Lemma~\ref{Lemma:N-Dist}, $\Sigma^{(M)}(\bvar{y})$ is attributable to the variance of $\bvar{M}_h$, and $\Sigma^{(N, M)}(\bvar{y})$ is attributable to the covariance of $(\bvar{M}_h, \bvar{N}_h)$.  
\end{theorem}
\begin{proof}
The proof can be found in Appendix \ref{Second Moment Appendix}. 
\end{proof}

Theorem~\ref{CLT} establishes that the rows of omnibus embedding when centered by the bias term given in Theorem~\ref{Main Bias Theorem} converges in distribution to a mixture of mean zero normal random variables. 
Moreover, the variance of this random variable, while complicated, is dominated by an explicit function of the weighting matrices $\{\bvar{C}^{(g)}\}_{g=1}^m$ and the second moment matrix $\Delta$.
While we do not explicitly derive the variance of the rows of $\bvar{M}$, as we explore in Example~\ref{ex:two-group-running-ex}, the variance $\Sigma_g(\bvar{y})$ is dominated by the $\Sigma_g^{(N)}(\bvar{y})$ which we explicitly state in Lemma~\ref{Lemma:N-Dist}.

In statistical applications it will often be necessary to consider linear combinations of the rows of $\hat{\bvar{L}}$.
We now provide two corollaries that characterize the asymptotic joint distribution of the rows of the omnibus embedding.
Corollary~\ref{Cor:Asym_Joint_Distribution_Same_Vertex} specifies the asymptotic joint distribution of two rows $\hat{\bvar{L}}$ corresponding to the same vertex in different graphs and can be easily extended to any finite collection of rows corresponding to the same vertex.

\begin{Corollary}\label{Cor:Asym_Joint_Distribution_Same_Vertex}
Let $r_g = i + n(g-1)$ and $r_k = i + n(k - 1)$ for $i\in[n]$ and $g,k\in[m]$, $g\neq k$ denote rows corresponding to the same node.
Define the vector
$\bvar{V} = ((\bvar{N} + \bvar{M})_{r_g}^T\hspace{.5em}(\bvar{N} + \bvar{M})_{r_k}^T \in \R^{2d}$ and let $\bvar{v}\in \R^{2d}$.
Then in the context of Theorem \ref{CLT} we have
\begin{align*}
    \lim_{n\to\infty}\prob\left[\sqrt{n}\bvar{V} \leq \bvar{v}\right]  
    = \int_{\text{supp}(F)}\Phi(\bvar{v}; \bvar{0}, \Omega_{gk}(\bvar{y}))dF(\bvar{y})
\end{align*}
where $\Omega_{gk}(\bvar{y})\in \R^{2d\times 2d}$ can be decomposed as $\Omega_{gk}(\bvar{y}) = \Omega_{gk}^{(N)}(\bvar{y}) + \Omega_{gk}^{(M)}(\bvar{y}) + \Omega_{gk}^{(N, M)}(\bvar{y})$ and $\Omega_{gk}^{(N)}(\bvar{y})$ can be written explicitly as  
\begin{align*}
    \Omega_{gk}^{(N)}(\bvar{y}) = \begin{bmatrix}
    \Sigma_g^{(N)}(\bvar{y}) &  \Sigma_{gk}^{(N)}(\bvar{y})\\
    \Sigma_{kg}^{(N)}(\bvar{y}) & \Sigma_k^{(N)}(\bvar{y})
    \end{bmatrix} 
\end{align*}
where $\Sigma_g^{(N)}(\bvar{y})$ 
is given in Theorem~\ref{CLT} and $\Sigma^{(N)}_{gk}(\bvar{y}) = \Sigma^{(N)}_{kg}(\bvar{y})^T$, which can be written explicitly as 
\begin{align*}
    \Sigma_{gk}^{(N)}(\bvar{y}) = \frac{1}{4}\Delta_S^{-1}\big[&(\bvar{S}^{(g)} + m\bar{\bvar{S}})\tilde{\Sigma}_g(\bvar{y})\bvar{S}^{(g)} + \bvar{S}^{(k)}\tilde{\Sigma}_k(\bvar{y})(\bvar{S}^{(k)} + m\bar{\bvar{S}})\\ 
    &+ \sum_{\ell\neq g,k}\bvar{S}^{(\ell)}\tilde{\Sigma}_{\ell}(\bvar{y})\bvar{S}^{(\ell)}\Big]\Delta_S^{-1}.
\end{align*}
\end{Corollary}

\begin{proof}
The proof can be found in Appendix~\ref{Second Moment Appendix}. 
\end{proof}

This result establishes the asymptotic covariances of the rows of $\hat{\bvar{L}}$ corresponding to a common vertex and provides an explicit expression for the leading covariance term.
The term $\Omega_{gk}^{(M)}(\bvar{y})$ describes covariance between $\bvar{M}_{r_g}$ and $\bvar{M}_{r_k}$ and $\Omega_{gk}^{(N,M)}(\bvar{y})$ describes covariance between pairs $(\bvar{N}_{r_g}, \bvar{M}_{r_g})$ and $(\bvar{N}_{r_k}, \bvar{M}_{r_k})$.
Corollary~\ref{Cor:Asym_Joint_Distribution_Different_Vertex} establishes this leading covariance term is block diagonal for rows of $\hat{\bvar{L}}$ corresponding to different vertices.

\begin{Corollary}\label{Cor:Asym_Joint_Distribution_Different_Vertex}
Let $r_i = i + n(g-1)$ and $r_k = j + n(k - 1)$ for $i,j\in[n]$ and $g,k\in[m]$ denote rows corresponding to different nodes $i\neq j\ \in[n]$.
Define the vector
$\bvar{V} = ((\bvar{N} + \bvar{M})_{r_i}^T\hspace{.5em}(\bvar{N} + \bvar{M})_{r_j}^T)^T \in \R^{2d}$ and let $\bvar{v}\in \R^{2d}$.
Then in the context of Theorem \ref{CLT} we have
\begin{align*}
    \lim_{n\to\infty}\prob\left[\sqrt{n}\bvar{V} \leq \bvar{v}\right]  
    = \int_{\text{supp}(F)}\int_{\text{supp}(F)}\Phi(\bvar{v}; \bvar{0}, \Psi_{gk}(\bvar{y}_1, \bvar{y}_2))dF(\bvar{y}_1)dF(\bvar{y}_2)
\end{align*}
where $\Psi_{gk}(\bvar{y}_1, \bvar{y}_2)\in \R^{2d\times 2d}$ can be decomposed as $\Psi_{gk}(\bvar{y}_1, \bvar{y}_2) = \Psi_{gk}^{(N)}(\bvar{y}_1, \bvar{y}_2) + \Psi_{gk}^{(M)}(\bvar{y}_1, \bvar{y}_2) + \Psi_{gk}^{(N, M)}(\bvar{y}_1, \bvar{y}_2)$ where 
\begin{align*}
    \Psi_{gk}^{(N)}(\bvar{y}_1, \bvar{y}_2) = \begin{bmatrix}
    \Sigma_g^{(N)}(\bvar{y}_1) &  \bvar{0}\\
    \bvar{0} & \Sigma_k^{(N)}(\bvar{y}_2)
    \end{bmatrix}
\end{align*}
where $\Sigma_g^{(N)}(\bvar{y})$ is given in Theorem~\ref{CLT}.
\end{Corollary}
\begin{proof}
The proof can be found in Appendix~\ref{Second Moment Appendix}. 
\end{proof}

This result establishes that rows of $\hat{\bvar{L}}$ corresponding to different vertices are approximately independent, asymptotically.
Any covariance that exists is due to covariance between
$\bvar{M}_{r_i}$ and $\bvar{M}_{r_j}$ and the pairs $(\bvar{N}_{r_i}, \bvar{M}_{r_i})$ and $(\bvar{N}_{r_j}, \bvar{M}_{r_j})$ which we anticipate to be small in practice.

\subsection{Theoretical Ramifications}

To support these theoretical findings, we consider a simulation of a two-layer multiplex network and show that (i) the estimates produced by the omnibus embedding are biased, (ii) they concentrate around the unique rows of $\bvar{L}_S$ at the rate given in Theorem~\ref{Main Bias Theorem}, and (iii) each unique latent position has a unique, graph-specific variance.

\begin{ex}\label{ex:two-group-running-ex}
Consider the two-group stochastic block model parameterized by class probabilities $(1/2, 1/2)$ and block probability matrix $\bvar{B}\in\R^{2\times 2}$.  
Let $F$ be a discrete distribution over $\ell_1 = (0.39, 0.32)^T$ and $\ell_2 = (0.39, -0.32)^T$ with mass $(1/2, 1/2)$. 
Consider $(\{\bvar{A}^{(g)}\}_{g=1}^2, \bvar{X})\sim\text{ESRDPG}(F, n, \{\bvar{C}^{(g)}\}_{g=1}^2)$ where $\bvar{B}$ and $\{\bvar{C}^{(g)}\}_{g=1}^2$ are given by 
\begin{align}\label{eq:three-layer}
    \bvar{B} = \begin{bmatrix} 0.25 & 0.05\\ 0.05 & 0.25\end{bmatrix} \hspace{2em}
    \bvar{C}^{(1)} = \begin{bmatrix} 1 & 0\\ 0 & 1\end{bmatrix} \hspace{2em}
    \bvar{C}^{(2)} = \begin{bmatrix} 1 & 0\\ 0 & 0\end{bmatrix} . 
\end{align}
Marginally, $\bvar{A}^{(1)}$ is a two-group SBM and $\bvar{A}^{(2)}$ is an \ErdosRenyi network model with parameter $p = 0.15$.
The latent positions for these networks can be found in Figure~\ref{fig:hierarchy_DSRDPG}.

We sample each $\bvar{A}^{(1)}, \bvar{A}^{(2)}$ from this ESRDPG for $n\in\{250, 500, 1000\}$.
For each sample, we construct the omnibus matrix $\tilde{\bvar{A}}$ and calculate the omnibus embedding $\hat{\bvar{L}}$ in $d=2$ dimensions. 
We look to compare $\hat{\bvar{L}}$ to the weighted latent positions $\bvar{L}$ as well as the scaled latent positions $\bvar{L}_S$.
We compare these three quantities in the left panel of Figure~\ref{fig:bias_plots}. 
The \textsf{x}s represent the points $\bvar{S}^{(g)}\bvar{x}_i$ and the \texttt{+}s represent the points $\sqrt{\bvar{C}^{(g)}}\bvar{x}_i$ for each $i=1,2$ and $g=1,2$.
The colored points are the estimated latent positions. 
The confidence ellipses are calculated \textit{a priori} from known model parameters and the expression of $\Sigma_g^{(N)}(\bvar{y})$ given in Theorem~\ref{CLT}.
While little bias is observed in the first graph the second network incurs non-trivial bias.
Theorem~\ref{Main Bias Theorem} provides a uniform bound on the rows of $\hat{\bvar{L}}\tilde{\bvar{W}}_n - \bvar{L}_S$.
The right panel of Figure~\ref{fig:bias_plots}, compares the $\log(nm)/\sqrt{n}$ rate of this bound to the observed residuals from the simulation study.

\begin{figure}
    \centering
    \includegraphics[width = .48\linewidth]{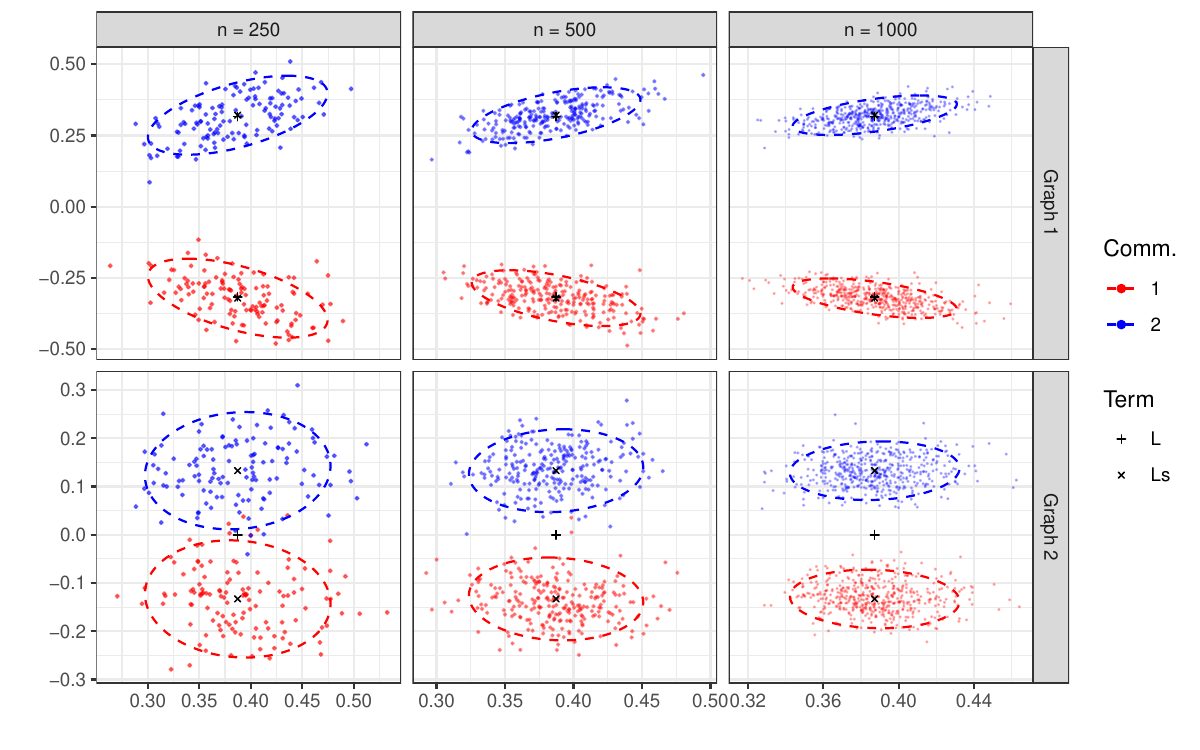}
    \includegraphics[width = .48\linewidth]{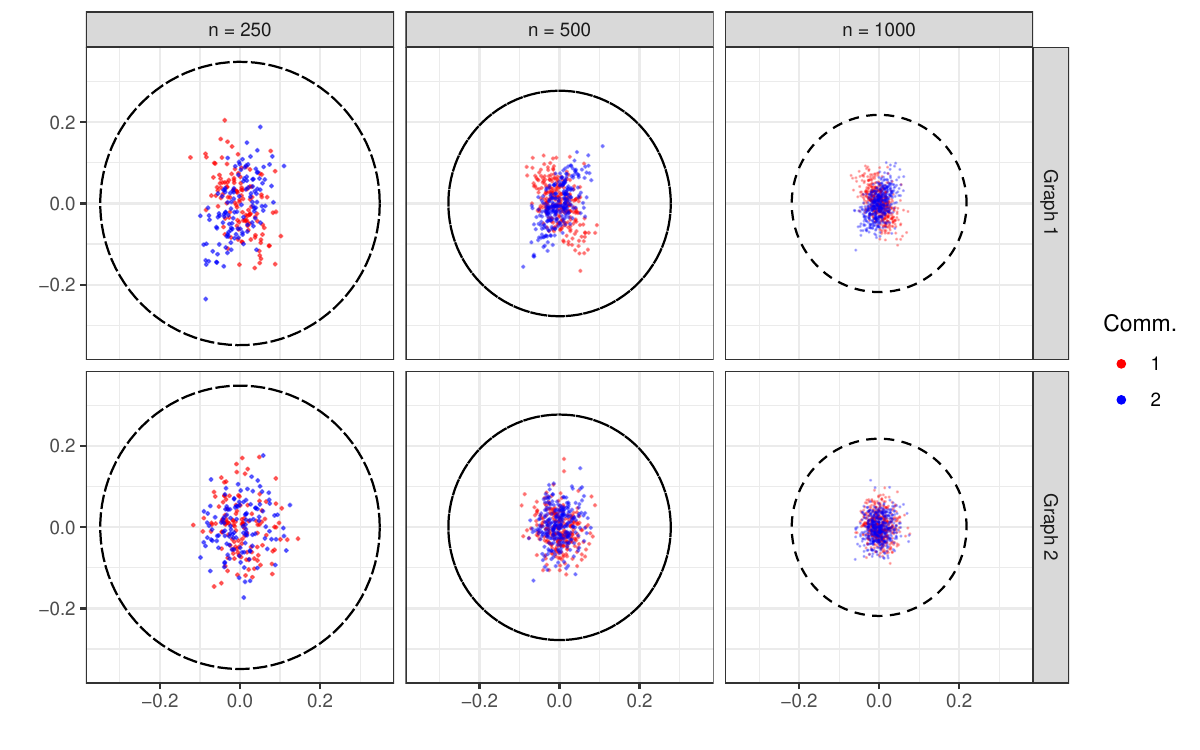}
    \caption{Left panel: Estimated latent positions produced by the omnibus embedding for the ESRDPG in Eq.~\eqref{eq:three-layer}.
    Rows correspond to each network $\bvar{A}^{(1)}$, $\bvar{A}^{(2)}$ while columns represent network size. 
    Points correspond to node embeddings and are colored by community labels. The \textsf{x}s correspond to the estimands $\bvar{L}_S$ and the \texttt{+}s correspond the weighted latent positions $\bvar{L}$.
    Right panel: Centered latent positions $\hat{\bvar{L}}\tilde{\bvar{W}}_n - \bvar{L}_S$.
    The dashed balls are of radius $n^{-1/2}\log 2n$ corresponding to the bound provided in Theorem~\ref{Main Bias Theorem}.}
    \label{fig:bias_plots}
\end{figure}
\qed
\end{ex}

The confidence ellipses in Figure~\ref{fig:bias_plots} are based only on $\Sigma_g^{(N)}(\bvar{y})$ and not the full covariance $\Sigma_g^{(N)}(\bvar{y}) + \Sigma_g^{(M)}(\bvar{y}) + \Sigma_g^{(N,M)}(\bvar{y})$.
This is in primarily due to the fact that the covariance is dominated by $\Sigma_g^{(N)}(\bvar{y})$. 
As we show in Appendix~\ref{Second Moment Appendix},  $\sqrt{n}\bvar{M}_h$ converges to matrix-vector product between a matrix with normally distributed entries with degenerate covariance and the vector $\bvar{X}_i$. 
For finite $n$, we interpret $\bvar{M}_h$ as a random matrix-vector product that translates the the mean $\bvar{L}_S$ in the direction of $\bvar{X}_i$ so the rows of $\hat{\bvar{L}}$ are approximately normally distributed around the rows of $\bvar{L}_S + \bvar{M}$.
As we demonstrate in Figure~\ref{fig:residual_expansion}, this shift is negligible in comparison to the covariance $\Sigma_g^{(N)}(\bvar{y})$ and vanishes in finite sample when $n^{-1}\bvar{X}^T\bvar{X} - \Delta = 0$ which is guaranteed asymptotically as $\max_{h\in[nm]}\|\bvar{M}_h\| = O(m^{3/2}n^{-1/2}\log nm)$. 
For a full discussion, the Appendix~\ref{Second Moment Appendix}.

\begin{excont}
Extending Example~\ref{ex:two-group-running-ex}, we now highlight the role of $\bvar{M}_h$ in finite sample networks. 
Under the two group SBM, $n^{-1}\bvar{X}^T\bvar{X} - \Delta = (p_1 - 1/2)\ell_1\ell_1^T +(1/2 - p_1)\ell_2\ell_2^T$ where $p_1 = n_1/n$ is the proportion of vertices in the first community. 
Therefore, the translation $\bvar{M}_h$ is determined entirely by the deviation $p_1 - 1/2$.
Under the ESRDPG, $np_1\sim\text{Binom}(n, 1/2)$ so we vary $p_1 = 1/2 + c/\sqrt{4n}$ in our simulation and view the effect of the translation term $\bvar{M}_h$ in Figure~\ref{fig:residual_expansion}.

\begin{figure}
    \centering
    \includegraphics[width = 0.75\linewidth]{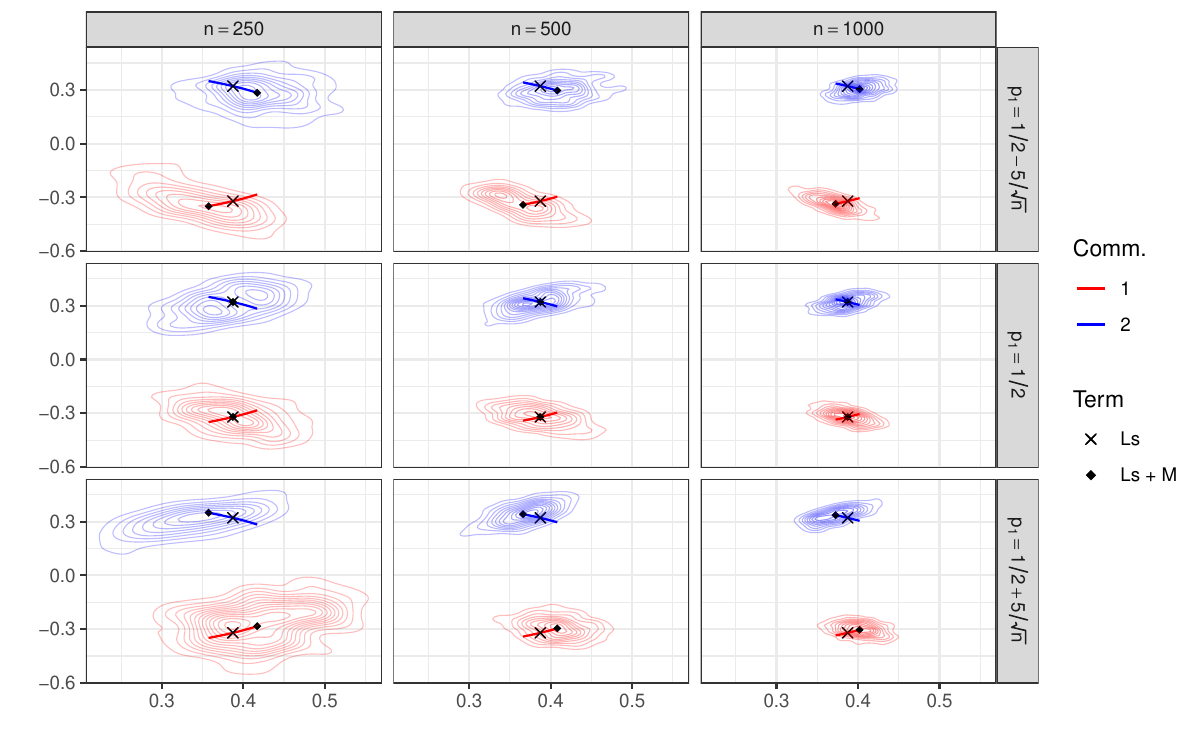}
    \caption{Contour plots of the omnibus estimated latent positions $\hat{\bvar{L}}$ corresponding to $\bvar{A}^{(1)}$, colored by community assignment, for networks of size $n\in\{250, 500, 1000\}$. 
    The \textsf{x}s correspond to the estimands $\bvar{L}_S$ and the $\diamond$s correspond the scaled latent positions $\bvar{L}_S$ translated by $\bvar{M}_h$ corresponding to $p_1 = 1/2 + c/\sqrt{4n}$ for $c = -10, 0, 10$.
    The red/blue lines denote the unique rows of $\bvar{L}_s + \bvar{M}$ for values of $c\in[-10,10]$.
    } 
    \label{fig:residual_expansion}
\end{figure}

In Figure~\ref{fig:residual_expansion}, we see that the contour plots representing $\hat{\bvar{L}}$ do not center around $\bvar{L}_S$ but instead $\bvar{L}_S + \bvar{M}$ in finite samples. 
This translation term itself is normally distributed with mean zero and will vary along the red/blue lines which pass through, and eventually concentrate around, $\bvar{L}_S$.
Note that the values $p_1 = 1/2 \pm 5/\sqrt{n}$ occur with near 0 probability.
We choose to include these extreme values as the differences between $\bvar{L}_S + \bvar{M}$ ($\diamond$s) and $\bvar{L}_S$ (\texttt{x}s) are difficult to decipher for values of $p_1 \in 1/2 \pm 3/\sqrt{4n}$ when $n =250$ and nearly indistinguishable when $n = 1000$.
Therefore, the covariance presented in Theorem~\ref{CLT} is almost entirely attributable to $\Sigma_g^{(N)}(\bvar{y})$.
\qed
\end{excont}

Having demonstrated our theoretical contributions, we now use these results to establish the properties of similar network embedding techniques and carry out a bias-variance analysis. 
As we saw in Example~\ref{Ex:omni_under_heterogenous_comm_det}, community detection can be carried out by clustering the rows of the so called \textit{Omnibar} matrix
\begin{align}\label{Omnibar Def}
    \bar{\bvar{X}} &= \frac{1}{m}\sum_{g=1}^m\hat{\bvar{X}}^{(g)}
\end{align}
where $\hat{\bvar{X}}^{(g)}$ is the $g$-th, $n\times d$ block matrix of $\hat{\bvar{L}}$.
Corollary~\ref{Cor:Asym_Joint_Distribution_Same_Vertex} allows us to directly establish the asymptotic distribution of $\bar{\bvar{X}}$.
This result, formalized in Corollary~\ref{Omnibar Distribution}, will be useful in the analysis of clustering algorithms applied to $\bar{\bvar{X}}$ for multiplex community detection discussed in Section~\ref{Section: Statistical Consequences} as well as facilitate our mean squared error comparisons.

\begin{Corollary}\label{Omnibar Distribution}
Let $\bar{\bvar{X}} = m^{-1}\sum_{g=1}^m\hat{\bvar{X}}^{(g)}$. 
Let $\{\tilde{\bvar{W}}_n\}_{n=1}^{\infty}$ be as in Theorem~\ref{CLT} and let $\bar{\bvar{S}} = m^{-1}\sum_{g=1}^m\bvar{S}^{(g)}$. 
Then the $i$-th row of this matrix satisfies 
\begin{align*}
    \lim_{n\to\infty}\prob\left[\sqrt{n}(\bar{\bvar{X}}\bvar{W}_n -\bvar{X\bar{S}})_i \leq \bvar{x}\right] = \int_{\text{supp}(F)}\Phi(\bvar{x}; \bvar{0}, \Sigma_{\text{OB}}(\bvar{y}))dF(\bvar{y})
\end{align*}
where the variance can be decomposed $\Sigma_{\text{OB}}(\bvar{y}) = \Sigma_{\text{OB}}^{(N)}(\bvar{y}) + \Sigma_{\text{OB}}^{(M)}(\bvar{y}) + \Sigma_{\text{OB}}^{(N,M)}(\bvar{y})$ and $\Sigma_{\text{OB}}^{(N)}(\bvar{y})$ can be written explicitly
\begin{align*}
    \Sigma_{\text{OB}}^{(N)}(\bvar{x}_i) =\frac{1}{4} \Delta_S^{-1}\sum_{g=1}^m\left(\bar{\bvar{S}} + \bvar{S}^{(g)}\right)\tilde{\Sigma}_g(\bvar{x}_i)\left(\bar{\bvar{S}} +\bvar{S}^{(g)}\right)\Delta_S^{-1}
\end{align*}
\end{Corollary}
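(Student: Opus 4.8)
The plan is to obtain the distribution of the omnibar rows by applying the joint-row result of Corollary~\ref{Asym Joint Distribution} to the linear map that averages the $m$ blocks corresponding to a fixed vertex $i$, and then reading off the limiting mean and covariance. Fix a vertex $i\in[n]$ and consider the $m$ rows $r_g = i + n(g-1)$ for $g\in[m]$ of $\hat{\bvar{L}}$. By Theorem~\ref{Main Bias Theorem}, after rotation by $\tilde{\bvar{W}}_n$ the $g$-th such row satisfies $(\hat{\bvar{L}}\tilde{\bvar{W}}_n)_{r_g} = \bvar{S}^{(g)}\bvar{X}_i + \bvar{R}_{r_g}$, so averaging over $g$ gives $(\bar{\bvar{X}}\tilde{\bvar{W}}_n)_i = \bar{\bvar{S}}\bvar{X}_i + \tfrac{1}{m}\sum_{g=1}^m \bvar{R}_{r_g}$, which identifies the centering $\bvar{X}\bar{\bvar{S}}$ and reduces the problem to the limiting behavior of $\sqrt{n}\cdot\tfrac{1}{m}\sum_{g} \bvar{R}_{r_g}$.

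The next step is distributional. First I would invoke the extension of Corollary~\ref{Asym Joint Distribution} to all $m$ rows sharing the common vertex $i$: conditionally on $\bvar{X}_i = \bvar{y}$, the stacked vector $\sqrt{n}(\bvar{R}_{r_1}^T,\dots,\bvar{R}_{r_m}^T)^T$ converges to a mean-zero Gaussian in $\R^{md}$ whose $d\times d$ diagonal blocks are $\Sigma_g(\bvar{y})$ and whose off-diagonal blocks (same vertex, $g\neq k$) are $\Sigma_{gk}^{(ii)}(\bvar{y})$, both given explicitly in Theorem~\ref{CLT} and Corollary~\ref{Asym Joint Distribution}. Since averaging is linear, $\sqrt{n}\cdot\tfrac1m\sum_g \bvar{R}_{r_g}$ converges conditionally to a mean-zero normal with covariance $\tfrac{1}{m^2}\sum_{g,k}\Sigma_{gk}^{(ii)}(\bvar{y})$ (reading $\Sigma_{gg}^{(ii)}=\Sigma_g$). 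Integrating against $F$ then yields the claimed mixture-of-normals statement, so the only substantive work is to verify that this double sum collapses to the stated $\Sigma_{\text{OB}}(\bvar{y}) = \tfrac14(\bvar{S}^2\Delta)^{-1}\sum_g(\bar{\bvar{S}}+\bvar{S}^{(g)})\tilde{\Sigma}_g(\bvar{y})(\bar{\bvar{S}}+\bvar{S}^{(g)})(\Delta\bvar{S}^2)^{-1}$.

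The main obstacle, and the heart of the proof, is this algebraic simplification of $\tfrac{1}{m^2}\sum_{g,k}\Sigma_{gk}^{(ii)}(\bvar{y})$. The strategy is to fix a summand index $\ell$ and collect the total coefficient of $\tilde{\Sigma}_{\ell}(\bvar{y})$ across all pairs $(g,k)$, factoring out the common sandwiching factors $\tfrac14(\bvar{S}^2\Delta)^{-1}(\cdots)(\Delta\bvar{S}^2)^{-1}$ since all $\bvar{S}^{(\cdot)}$ and $\bar{\bvar{S}}$ are diagonal and commute. The contributions to $\tilde{\Sigma}_{\ell}$ come from three sources: the diagonal term $\Sigma_{\ell}$ (where $\ell$ plays the distinguished role and picks up $(\bvar{S}^{(\ell)}+m\bar{\bvar{S}})$ on each side), the off-diagonal terms $\Sigma_{g\ell}^{(ii)}$ and $\Sigma_{\ell k}^{(ii)}$ (where $\ell$ is the distinguished index on exactly one side), and the ``spectator'' terms where $\ell\neq g,k$ (contributing $2\bvar{S}^{(\ell)}\tilde{\Sigma}_{\ell}\bvar{S}^{(\ell)}$ per pair). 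Summing the left-side factors over the appropriate pairs should telescope to $m(\bvar{S}^{(\ell)}+\bar{\bvar{S}})$, giving $\tfrac{1}{m^2}\cdot m^2 (\bvar{S}^{(\ell)}+\bar{\bvar{S}})\tilde{\Sigma}_{\ell}(\bvar{S}^{(\ell)}+\bar{\bvar{S}})$ after matching both sides; I would carry out this bookkeeping by writing $\tfrac1m\sum_g \bvar{S}^{(g)} = \bar{\bvar{S}}$ to turn the per-index coefficient into exactly $(\bvar{S}^{(\ell)}+\bar{\bvar{S}})$ on each side, which reproduces the stated formula. The risk is purely combinatorial miscounting of how $m\bar{\bvar{S}}$ versus $\bvar{S}^{(\ell)}$ enters from the diagonal versus off-diagonal blocks, so I would double-check the factor of $m$ against the $m=1$ and $\bvar{C}^{(g)}\equiv\bvar{I}$ sanity cases, where $\Sigma_{\text{OB}}$ must reduce to the known homogeneous omnibar variance of \textcite{OmniCLT}.
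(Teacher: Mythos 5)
Your route is sound and genuinely different from the paper's. The paper does not pass through Corollary~\ref{Asym Joint Distribution} at all: it returns to the decomposition $\sqrt{n}(\bar{\bvar{X}}\bvar{W}_n-\bvar{X}\bar{\bvar{S}})_i = n\tilde{\bvar{W}}_n\bvar{S}_{\tilde{\bvar{P}}}^{-1}\tilde{\bvar{W}}_n^T\cdot m^{-1}\sum_g n^{-1/2}[(\tilde{\bvar{A}}-\tilde{\bvar{P}})\bvar{L}_S]_{i+n(g-1)}$ plus vanishing residuals, interchanges the sums over $g$ and $j$, and observes that the average of the $m$ power-method terms is itself a single sum over $j\neq i$ of i.i.d.\ mean-zero vectors with per-graph coefficient $\frac{1}{2m}\sum_{\ell\neq g}\bvar{S}^{(\ell)}+\frac{m+1}{2m}\bvar{S}^{(g)}=\frac{1}{2}(\bar{\bvar{S}}+\bvar{S}^{(g)})$; one application of the multivariate CLT then yields $\Sigma_{\text{OB}}$ directly, with no double sum over $(g,k)$ to collapse. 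Your approach buys reuse of the already-established joint covariance structure and makes the ``variance of a linear combination'' logic transparent, at the cost of (i) needing the routine extension of Corollary~\ref{Asym Joint Distribution} from two rows to all $m$ rows sharing a vertex (which the paper asserts is straightforward), and (ii) the combinatorial bookkeeping you yourself identify as the main risk. The paper's approach is self-contained and sidesteps both.

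One concrete caution on that bookkeeping: if you plug in $\Sigma_{gk}^{(ii)}$ exactly as printed in Corollary~\ref{Asym Joint Distribution}, with the term $2\sum_{\ell\neq g,k}\bvar{S}^{(\ell)}\tilde{\Sigma}_{\ell}\bvar{S}^{(\ell)}$, the double sum does \emph{not} collapse to $\Sigma_{\text{OB}}$ for $m\geq 3$: the coefficient of $\bvar{S}^{(\ell)}\tilde{\Sigma}_{\ell}\bvar{S}^{(\ell)}$ accumulated from the diagonal and spectator contributions becomes $(m-1)+2(m-1)(m-2)$, whereas matching $m^2(\bar{\bvar{S}}+\bvar{S}^{(\ell)})\tilde{\Sigma}_{\ell}(\bar{\bvar{S}}+\bvar{S}^{(\ell)})=(A+(m-1)B)\tilde{\Sigma}_{\ell}(A+(m-1)B)$ with $A=\bvar{S}^{(\ell)}+m\bar{\bvar{S}}$, $B=\bvar{S}^{(\ell)}$ requires $(m-1)^2$. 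Recomputing the cross-covariance from the conditional independence of the layers (the only surviving spectator terms are $\ell=\ell'\notin\{g,k\}$, each appearing once) shows the spectator coefficient should be $1$ rather than $2$; with that correction your telescoping works exactly and reproduces the stated $\Sigma_{\text{OB}}$. Your proposed sanity check against the i.i.d.\ case would catch this only if run with $m\geq 3$, since for $m\leq 2$ the spectator sum is empty.
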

\begin{proof}
The result follows from an application of Corollary~\ref{Cor:Asym_Joint_Distribution_Same_Vertex}.
\end{proof}

The results in Section~\ref{Subsec:Main_Results} and work from \textcite{EigenCLT} and \textcite{Connectome-Smooth} allow us to analytically compare the MSE of the omnibus embedding to similar network embedding techniques.
We consider the four embedding techniques; the adjacency spectral embedding (ASE) of \textcite{ASE}, the Abar estimator of \textcite{Connectome-Smooth}, and the omnibus and omnibar estimators of \textcite{OmniCLT}.
Equipped with asymptotic distributions for all four estimates, we can now compare their the mean squared error for estimating the scaled latent positions. 
The bias and asymptotic variance of each estimator can be found in Table~\ref{MSE table}. 
\begin{table}[h]
    \centering
     \begin{tabular}{|c|c|c|}
    \hline 
    Method & Bias & Variance\\
    \hline
    ASE     & $\bvar{0}$ & $(\sqrt{\bvar{C}^{(g)}}\Delta)^{-1}\tilde{\Sigma}_g(\bvar{x}_i)(\Delta\sqrt{\bvar{C}^{(g)}})^{-1}/n$ \\
    Abar     & $(\sqrt{\bar{\bvar{C}}} - \sqrt{\bvar{C}^{(g)}})\bvar{x}_i$ & $(\sqrt{\bar{\bvar{C}}}\Delta)^{-1}\sum_{g=1}^m\tilde{\Sigma}_g(\bvar{x}_i)(\Delta\sqrt{\bar{\bvar{C}}})^{-1}/m^2n$ \\
    Omni     & $(\bvar{S}^{(g)} - \sqrt{\bvar{C}^{(g)}})\bvar{x}_i$ &  $\Sigma_g(\bvar{x}_i)/n$ \\
    Omnibar     & $(\bar{\bvar{S}} - \sqrt{\bvar{C}^{(g)}})\bvar{x}_i$  & $\Sigma_{OB}(\bvar{x}_i)/n$ \\
    \hline
    \end{tabular}
    \caption{The asympoptic bias and variance of the ASE, Abar, omnibus, and omnibar embedding techniques under the ESRDPG.}
    \label{MSE table}
\end{table}

The ASE is the only asymptotically unbiased estimator of the scaled latent positions.
This method, however, ignores common structure among the networks and as a consequence suffers higher variance than the other methods.
The remaining methods all incur bias in the direction of the true latent position $\bvar{x}_i$, but are unbiased when $\bvar{C}^{(g)} = \bvar{C}$ for all $g\in[m]$.

The comparison of variances under the ESRDPG is a more complicated undertaking. 
The variance introduced in Theorem~\ref{CLT} and Corollary~\ref{Omnibar Distribution} can be interpreted as linear combinations of the individual network variances $\tilde{\Sigma}_g(\bvar{x}_i)$ with weights given by the scaling matrices $\{\bvar{S}^{(g)}\}_{g=1}^m$.
This interpretation becomes more clear for the omnibar estimator where we see each $\bvar{S}^{(g)}$ is included in the scaling of each $\tilde{\Sigma}_g(\bvar{x}_i)$. 
The variance of ASE and Abar can be seen as a normalization of the graph variance $\tilde{\Sigma}_g(\bvar{x}_i)$ by pre and post multiplying either $(\sqrt{\bvar{C}^{(g)}})^{-1}$ or $(\sqrt{\bar{\bvar{C}}})^{-1}$.
In the i.i.d.\ setting, $\tilde{\Sigma}_g(\bvar{x})_i = \tilde{\Sigma}(\bvar{x}_i)$ for all $g\in [m]$ and the Abar variance reduces to $\Delta^{-1}\tilde{\Sigma}(\bvar{x}_i)\Delta^{-1}/mn$.
This expression was presented in Theorem C.1 of \textcite{Connectome-Smooth} and highlights the variance reduction enjoyed by the Abar embedding.

Due to its asymptotic bias, this bias-variance analysis affirms that the omnibus embedding is not suited for latent position estimation for large graphs.
While the embedding may provide more robust estimation performance due to its variance reduction for small $n$, for moderate and large $n$ the adjacency spectral embedding is likely the preferable estimator.
However, a practitioner's final goal is rarely on latent position estimation and instead on the accuracy of algorithms that use these estimates.
In the following section, we demonstrate algorithms applied to omnibus node embeddings are provably accurate despite the biased latent position estimation and often better perform due to the variance reduction and the effect the bias plays in the regularizing the node embeddings.

\section{Statistical Consequences}\label{Section: Statistical Consequences}
Having characterized the large-graph properties of the rows of the omnibus embedding under the ESRDPG, we turn to analyzing the viability of utilizing these node embeddings for accurate multi-graph infernece.
Facilitated by Theorem~\ref{Main Bias Theorem} and Theorem~\ref{CLT}, we first investigate the ability of various clustering algorithms, applied to omnibus node embeddings, to detect community structure.
We then develop a two graph hypothesis test which exploits the distributional results found in Theorem~\ref{CLT} and Corollary~\ref{Cor:Asym_Joint_Distribution_Same_Vertex}.

\subsection{Community Detection}

Example~\ref{Ex:omni_under_heterogenous_comm_det} demonstrates that accurate community detection can be achieved by applying clustering techniques to the average node embeddings $\bar{\bvar{X}}_i = m^{-1}\sum_{g=1}^m\hat{\bvar{X}}_i^{(g)}$.
Theorem~\ref{Main Bias Theorem} and Corollary~\ref{Omnibar Distribution} establishes $\bar{\bvar{X}}_i$ concentrates around, $\bar{\bvar{S}}\bvar{X}_i$ at a rate of $O(m^{3/2}n^{-1/2}\log nm)$, up to Gaussian error. 
These theoretical results allow us to rigorously analyze clustering algorithms applied to the points $\{\bar{\bvar{X}}_i\}_{i=1}^n\subset\R^d$.
Here, we consider both the $k$-means clustering algorithm and Gaussian Mixture Models (GMM) as clustering algorithms.

Suppose that $F$ is a discrete distribution over $\{\bvar{x}_k\}_{k=1}^K\subset \R^d$ with probabilities $\{\pi_k\}_{k=1}^K\subset (0,1)$ with $\sum_{k=1}^{K}\pi_k = 1$.
Within an ESRDPG framework, sampling $\bvar{X}_1, \bvar{X}_2, \ldots, \bvar{X}_n \overset{i.i.d.}{\sim }F$ and $\bvar{A}^{(g)}\sim\text{Bern}(\bvar{X}\bvar{C}^{(g)}\bvar{X}^T)$ corresponds to sampling $\{\bvar{A}^{(g)}\}_{g=1}^m$ from a Multilayer Stochastic Block Model (MSBM) \parencite{arroyo2019}.
Under the MSBM, a node's community assignment is fixed across networks while the block connection probabilities are allowed to vary. 
The community detection task is then to recover these community assignments upon observing $\{\bvar{A}^{(g)}\}_{g=1}^m$.

\subsubsection{Community Detection with $k$-means}

As the distance between community centroids increases, the clustering task becomes easier. 
Embedding techniques that separate these centroids the furthest should achieve lower error for sufficiently large $n$.
Therefore, we first characterize the distance between centroids for both the Omnibar embedding $\{\bar{\bvar{S}}\bvar{x}_k\}_{k=1}^m$ and the Abar embedding $\{\sqrt{\bar{\bvar{C}}}\bvar{x}_k\}_{k=1}^m$.
We on focus on these embeddings as they achieve the lowest error in Example~\ref{Ex:omni_under_heterogenous_comm_det}.
Employing expressions introduced in Theorem~\ref{Main Bias Theorem}, Corollary~\ref{Cor:Centroid_Distance} reveals that the distance between $\{\bar{\bvar{S}}\bvar{x}_k\}_{k=1}^m$ are always at least as far as the distance between $\{\sqrt{\bar{\bvar{C}}}\bvar{x}_k\}_{k=1}^m$.

\begin{Corollary}\label{Cor:Centroid_Distance}
Suppose that $F$ is a discrete distribution over $\{\bvar{x}_k\}_{k=1}^K$.
Recall $\|\sqrt{\bar{\bvar{C}}}(\bvar{x}_k - \bvar{x}_{\ell})\|$ and $\|\bar{\bvar{S}}(\bvar{x}_k - \bvar{x}_{\ell})\|$ give the distance between the $k$ and $\ell$ centroid for the Abar and Omnibar embeddings, respectively.
Then for any $k, \ell\in[K]$  with $k\neq \ell$
\begin{align*}
1 \leq   \frac{\|\bar{\bvar{S}}(\bvar{x}_k - \bvar{x}_{\ell})\|}{\|\sqrt{\bar{\bvar{C}}}(\bvar{x}_k - \bvar{x}_{\ell})\|} \leq \frac{m^{-1/4} + m^{1/4}}{2} 
\end{align*}
\end{Corollary}
\begin{proof}
The proof can be found in Appendix~\ref{Corollaries and Statistical Consequences Appendix}.
\end{proof}

The lower bound is achieved in the i.i.d.\ setting where $\bar{\bvar{S}} = \sqrt{\bar{\bvar{C}}} = \sqrt{\bvar{C}}$ for some fixed matrix $\bvar{C}$.
The upper bound is achieved, for example, when $\bvar{X}\in\R^{n\times (m+1)}$ and $\bvar{C}^{(g)} = \bvar{e}_1\bvar{e}_1^T + \bvar{e}_{g+1}\bvar{e}_{g+1}^T$. 
In this setting, only graph $g$ is active in the $g+1$ dimension of the latent space and the relative differences in centroid centroids satisfies 
\begin{align*}
    \frac{\|\bar{\bvar{S}}(\bvar{x}_k - \bvar{x}_{\ell})\|}{\|\sqrt{\bar{\bvar{C}}}(\bvar{x}_k - \bvar{x}_{\ell})\|} &= \sqrt{\frac{\bar{\bvar{C}}_{11}(\bvar{x}_k - \bvar{x}_{\ell})_{11}^2}{\sum_{i=1}^m\bar{\bvar{C}}_{ii}(\bvar{x}_k - \bvar{x}_{\ell})_{ii}^2} +\left(\frac{m^{-1/4} + m^{1/4}}{2}\right)\frac{\sum_{i=2}^m\bar{\bvar{C}}_{ii}(\bvar{x}_k - \bvar{x}_{\ell})_{ii}^2}{\sum_{i=1}^m\bar{\bvar{C}}_{ii}(\bvar{x}_k - \bvar{x}_{\ell})_{ii}^2}}\\ &\geq O(m^{-1/4} + m^{1/4}).
\end{align*}
In this example the distances between $\{\bar{\bvar{S}}\bvar{x}_k\}_{k=1}^m$ are $O(m^{1/4})$ further apart than the $\{\sqrt{\bar{\bvar{C}}}\bvar{x}_k\}_{k=1}^m$.
Therefore, for large $m$, the omnibus embedding separates the community centroids considerably further than the Abar embedding.
While Corollary~\ref{Cor:Centroid_Distance} does not address the rate of convergence to these centroids, the result highlights an example where the biased estimates provided by the omnibus embedding make the downstream inference task easier to address.

We now consider the effect of the rate of convergence to these centroids by stating sufficient conditions for the exact recovery of the community labels.
Denote $\{\bar{\bvar{X}}_i\}_{i=1}^n$ and $\{\bar{\bvar{Y}}_i\}_{i=1}^n$ as the Omnibar and Abar node embeddings, respectively. 
The node embeddings, $\{\bar{\bvar{X}}_i\}_{i=1}^n$ and $\{\bar{\bvar{Y}}_i\}_{i=1}^n$, concentrate around the centroids $\{\bar{\bvar{S}}\bvar{x}_k\}_{k=1}^K$ and $\{\sqrt{\bar{\bvar{C}}}\bvar{x}_k\}_{k=1}^K$, respectively.
Suppose that vertex $i\in[n]$ is assigned to community $k\in[K]$.
Then Theorem~\ref{Main Bias Theorem} establishes $\bar{\bvar{X}}_i = \bar{\bvar{S}}\bvar{x}_k  + O(m^{3/2}n^{-1/2}\log nm)$ and extending Theorem 8 of~\textcite{RDPGSurvey} gives $\bar{\bvar{Y}}_i = \sqrt{\bar{\bvar{C}}}\bvar{x}_i + O(n^{-1/2}\log nm)$.
Therefore, to achieve exact recovery of the community labels in finite sample networks, these centroids must be sufficiently separated.
We formalize these conditions in Corollary~\ref{Cor:exact_recovery_conditions}.

\begin{Corollary}\label{Cor:exact_recovery_conditions}
Suppose that $(\{\bvar{A}^{(g)}\}_{g=1}^m, \bvar{X})\sim \text{ESRPDG}(F, n, \{\bvar{C}^{(g)}\}_{g=1}^m)$ where $F$ is a discrete distribution over $\{\bvar{x}_k\}_{k=1}^K$.
Let $\{\bar{\bvar{X}}_i\}_{i=1}^n$ and $\{\bar{\bvar{Y}}_i\}_{i=1}^n$ denote the Omnibar and Abar embeddigns, respectively. 
Then the community labels are recovered exactly with high probability provided
\begin{align*}
    \min_{i,j\in[K]}\|\sqrt{\bar{\bvar{C}}}(\bvar{x}_i - \bvar{x}_j)\| > \alpha(n,m) \quad \quad \min_{i,j\in[K]}\|\bar{\bvar{S}}(\bvar{x}_i - \bvar{x}_j)\| > \beta(n,m).
\end{align*}
Here, $\alpha(n,m), \beta(n,m)$ are constants depending on model parameters $(n, m)$ such that $\alpha(n,m) = \Theta(n^{-1/2}\log(nm))$ and $\beta(n,m) = \Theta(m^{3/2}n^{-1/2}\log(nm))$.
\end{Corollary}

An analogous analysis of k-means applied to the rows of adjacency spectral embedding for single network data was completed in \textcite{PerfectClustering}.
As these arguments follows \textit{mutatis mutandis} from Theorem 2.6 of \textcite{PerfectClustering} with the application of Theorem~\ref{Main Bias Theorem} and the extension of Theorem 8 of \textcite{RDPGSurvey} to the Abar embedding, we refer the reader to this analysis and state the sufficient conditions without proof.

While Corollary~\ref{Cor:Centroid_Distance} establishes that the Omnibar embedding separates centroid centers more than the Abar embedding, Corollary~\ref{Cor:exact_recovery_conditions} establishes that less separation is needed for the Abar embedding to achieve exact recovery. 
This tradeoff appears to be driven by the number of networks, $m$.
For small $m$, the concentration rates of the two methods are comparable, but the upper bound in Corollary~\ref{Cor:Centroid_Distance} ensure that the distances between the Omnibar centroids $\{\bar{\bvar{S}}\bvar{x}_k\}_{k=1}^K$ are comparable to the distances between the Abar centroids $\{\sqrt{\bar{\bvar{C}}}\bvar{x}_k\}_{k=1}^K$.
For large $m$, the Omnibar concentration could potentially be slower than the Abar concentration, yet the method could separate the centroids $\{\bar{\bvar{S}}\bvar{x}_k\}_{k=1}^K$ significantly further than the Abar centroids $\{\sqrt{\bar{\bvar{C}}}\bvar{x}_k\}_{k=1}^K$.
This insight could explain why the Omnibar embedding the Abar embedding achieve near identical clustering performance in Example~\ref{Ex:omni_under_heterogenous_comm_det}.

\subsubsection{Community Detection with Gaussian Mixture Models}
Under the RDPG parameterization of the MSBM, conditioning on a vertex's community assignment is equivalent to conditioning on its latent position $\{\bvar{X}_i = \bvar{x}_i\}$.
This observation gives rise to Corollary~\ref{SBM Conditional}.
\begin{Corollary}\label{SBM Conditional}
In the context of Theorem~\ref{CLT}, suppose for $\bvar{x}_k\in\text{supp}(F)$ that $\prob(\bvar{X}_i = \bvar{x}_k)> 0$.
Then conditional on $\{\bvar{X}_i = \bvar{x}_k\}$, the Omnibar embedding satisfies
\begin{equation}
    \lim_{n\to\infty}\prob\left[\sqrt{n}(\bar{\bvar{X}}\tilde{\bvar{W}}_n - \bvar{X}\bar{\bvar{S}})_i\leq \bvar{z} | \bvar{X}_i = \bvar{x}_k\right] = \Phi(\bvar{z}; \bvar{0}, \Sigma_{OB}(\bvar{x}_k))
\end{equation}
\end{Corollary}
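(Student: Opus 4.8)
The plan is to reduce the conditional statement to a per-vertex conditional central limit theorem for the residual $\bvar{R}_h$ that is already (implicitly) established inside the proof of Theorem~\ref{CLT}, and then to exploit the fact that under an SBM the conditioning event has strictly positive probability.

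First I would rewrite the quantity of interest purely in terms of $\bvar{R}_h$. Writing $h = n(g-1)+i$, Definition~\ref{Bias Matrices} gives $(\bvar{L}_S)_h = \bvar{S}^{(g)}\bvar{X}_i$ while $(\bvar{L})_h = \sqrt{\bvar{C}^{(g)}}\bvar{X}_i$, so subtracting decomposition (A) of Theorem~\ref{Main Bias Theorem} (with $\bvar{W}_n = \tilde{\bvar{W}}_n$ the alignment produced there) yields
\begin{align*}
(\hat{\bvar{L}}\bvar{W}_n - \bvar{L}_S)_h &= (\hat{\bvar{L}}\bvar{W}_n - \bvar{L})_h + (\bvar{L} - \bvar{L}_S)_h \\
&= \big[(\bvar{S}^{(g)} - \sqrt{\bvar{C}^{(g)}})\bvar{X}_i + \bvar{R}_h\big] + (\sqrt{\bvar{C}^{(g)}} - \bvar{S}^{(g)})\bvar{X}_i = \bvar{R}_h.
\end{align*}
Thus $\sqrt{n}(\hat{\bvar{L}}\bvar{W}_n - \bvar{L}_S)_h = \sqrt{n}\bvar{R}_h$: the deterministic bias cancels exactly and it suffices to identify the conditional limiting law of $\sqrt{n}\bvar{R}_h$ given $\{\bvar{X}_i = \bvar{x}\}$.

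Second, I would return to the argument behind Theorem~\ref{CLT}. That proof controls $\sqrt{n}\bvar{R}_h$ by isolating a leading stochastic term which, after conditioning on the latent position $\bvar{X}_i$ of the vertex indexing row $h$, is a sum of row-wise independent, mean-zero contributions built from the centered entries $\bvar{A}_{ij}^{(\ell)} - \bvar{P}_{ij}^{(\ell)}$ and the remaining positions $\{\bvar{X}_j\}_{j\neq i}$. Since conditioning on the single coordinate $\bvar{X}_i = \bvar{x}$ leaves $\{\bvar{X}_j\}_{j \neq i}$ i.i.d.\ from $F$, a multivariate Lindeberg--Feller argument shows this conditional sum converges to a centered Gaussian whose covariance is obtained by averaging the per-edge variances $\bvar{x}^T\bvar{C}^{(\ell)}\bvar{X}_j(1 - \bvar{x}^T\bvar{C}^{(\ell)}\bvar{X}_j)$ over $\bvar{X}_j\sim F$; this average is exactly $\tilde{\Sigma}_\ell(\bvar{x})$, and the block bookkeeping across the $m$ copies assembles $\Sigma_g(\bvar{x})$. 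In other words, the proof of Theorem~\ref{CLT} first establishes the conditional limit $\prob[\sqrt{n}\bvar{R}_h \leq \bvar{z}\mid \bvar{X}_i = \bvar{y}] \to \Phi(\bvar{z};\bvar{0},\Sigma_g(\bvar{y}))$ and only afterwards integrates against $dF(\bvar{y})$ (by bounded convergence) to recover the stated mixture. I would simply read off this intermediate conditional statement at $\bvar{y} = \bvar{x}$.

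Third, I would invoke the SBM hypothesis. Because $F$ is discrete and $\bvar{x}$ is one of its atoms, $\prob(\bvar{X}_i = \bvar{x}) > 0$, so $\{\bvar{X}_i = \bvar{x}\}$ is a genuine positive-probability event and the left-hand conditional probability is the elementary conditional probability; the conditional CLT from the previous step then applies directly at $\bvar{y}=\bvar{x}$, giving $\Phi(\bvar{z};\bvar{0},\Sigma_g(\bvar{x}))$. This positivity is precisely what distinguishes the SBM setting and lets us avoid the regular-conditional-distribution machinery a continuous $F$ would demand. The main obstacle is therefore not a new probabilistic estimate but confirming that the proof of Theorem~\ref{CLT} genuinely factors through this per-vertex conditional limit rather than only the integrated statement; once that conditional step is in hand, the corollary is immediate, since the cancellation of the first step removes the bias and $\Sigma_g(\bvar{x})$ depends on the conditioning only through the fixed value $\bvar{x}$ entering $\tilde{\Sigma}_\ell(\bvar{x})$.
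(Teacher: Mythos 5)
Your proposal is correct and follows essentially the same route as the paper: the proof of Theorem~\ref{CLT} (via Lemma~\ref{Power Method CLT}) already establishes the conditional Gaussian limit given $\{\bvar{X}_i = \bvar{x}\}$ before integrating over $F$, so conditioning on an atom of the discrete SBM distribution simply replaces the mixture integral by evaluation at $\bvar{x}$. Your added observations --- that $(\hat{\bvar{L}}\bvar{W}_n - \bvar{L}_S)_h = \bvar{R}_h$ exactly, and that positivity of $\prob(\bvar{X}_i=\bvar{x})$ lets the unconditional negligibility of the residual terms transfer to the conditional statement --- are correct and slightly more careful than the paper's own two-line argument.
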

\begin{proof}
The proof can be found in Appendix~\ref{Corollaries and Statistical Consequences Appendix}.
\end{proof}

The community specific covariance given in Corollary~\ref{SBM Conditional} suggests performing clustering with GMM fit using the EM algorithm as this approach can flexibly incorporate differing variance structures between communities.
While Corollary~\ref{SBM Conditional} establishes an asymptotic distribution for $\bar{\bvar{X}}_i|\{\bvar{X}_i = \bvar{x}_k\}$ and Corollary~\ref{Cor:Asym_Joint_Distribution_Different_Vertex} establishes approximate asymptotic independence between $\bar{\bvar{X}}_i$ and $\bar{\bvar{X}}_j$ for $i\neq j$, these results are only approximate for finite $n$. 
Nonetheless, these results motivate the use of a pseudo-likelihood method that assume $\bar{\bvar{X}}_i$ are independent, normally distributed vectors. 
Let $\bvar{Z}\in[K]^n$ be a vector in which $\bvar{Z}_i$ is the community assignment of vertex $i$.
Then we utilize the pseudo-likelihood of $(\bar{\bvar{X}}, \bvar{Z})$ which takes the form of a GMM
\begin{align*}
    \mathcal{PL}(\bar{\bvar{X}}, \bvar{Z}) =  \prod_{i=1}^n\prod_{k=1}^K\pi_k^{I(\bvar{Z}_i = k)}\phi(\bar{\bvar{S}}\bvar{x}_k, n^{-1}\Sigma_{OB}(\bvar{x}_k))^{I(\bvar{Z}_i = k)}
\end{align*}
where $\phi(\mu, \Sigma)$ is the density function of a normal random variable with mean $\mu$ and covariance $\Sigma$.
The maximization step is slightly more complex than that of a standard GMM as the variance is mean dependent and hence represent a curved sub-family of the multivariate normal model.
To avoid this difficultly, we implement the traditional GMM algorithm to derive estimates for the model parameters $\left(\pi, \bar{\bvar{S}}\bvar{x}_k, n^{-1}\Sigma_{OB}(\bvar{x}_k)\right)$ and community assignment vector $\bvar{Z}$.

With this choice of algorithm, we now look to study the Mahalanobis distance between centroids for each embedding method considered.
As the GMM estimates both the mean and variance of each community on each iteration, optimal embedding methods will both separate the centroids sufficiently and reduce dispersion around these centroids. 
By analyzing the Mahalanobis distance between centroids, we hope to quantify the general difficulty of the clustering task induced by each embedding technique. 
In Example~\ref{Ex:mahalanobis_comm_det_example}, we compare an assortment of joint embedding techniques in a community detection task where we record both the classification accuracy as well as the estimated Mahalanobis distances between community centroids.

\begin{ex}\label{Ex:mahalanobis_comm_det_example}
Suppose that $(\{\bvar{A}^{(g)}\}_{g=1}^4, \bvar{X})\sim\mathrm{ESRDPG}(F, n = 100, \{\bvar{C}^{(g)}\}_{g=1}^4)$ where $F$ corresponds to a SBM with $K = 3$ groups. 
Let $\bvar{B}$ be the block probability matrix given by 
\begin{align*}
    \bvar{B} = \begin{bmatrix} 0.3 & 0.1 & 0.1\\
    0.1 & 0.25 & 0.15\\
    0.1 & 0.15 & 0.25
    \end{bmatrix}.
\end{align*}
corresponding to latent positions $\ell_1 = (0.41,-0.37,0)^T$, $\ell_2 = (0.41, 0.18, -0.23)^T$, and $\ell_3 = (0.41,0.18, 0.23)^T$. 
Moreover, suppose the four weighting matrices are given by 
\begin{align*}
    \bvar{C}^{(1)} = \bvar{I}\hspace{1em}\bvar{C}^{(2)}(t) = \begin{bmatrix}1 & 0 & 0\\ 0 & 1 & 0 \\ 0 & 0 & 1-t\end{bmatrix}
    \hspace{1em}\bvar{C}^{(3)}(t) = \begin{bmatrix}1 & 0 & 0\\ 0 & 1-t & 0 \\ 0 & 0 & 1\end{bmatrix}
    \hspace{1em}\bvar{C}^{(4)}(t) = \begin{bmatrix}1 & 0 & 0\\ 0 & 1-t & 0 \\ 0 & 0 & 1-t\end{bmatrix}
\end{align*}
for $t\in [0,1]$. 
$t$ parameterizes the distance from homogeneity.
At $t = 1$, $\bvar{A}^{(2)}|\bvar{X}$ corresponds to a $K = 2$ group SBM, $\bvar{A}^{(3)}|\bvar{X}$ corresponds a $K = 3$ group SBM with different connectivity structure, and $\bvar{A}^{(4)}|\bvar{X}$ corresponds to an \ErdosRenyi graph. 

We sample networks of size $n = 100$ and attain estimated community labels from the GMM applied to node embedding produced by embedding techniques Abar, ASE1, JE, MASE, MRDPG, Omnibar.
ASE1, Abar, MASE, and Omnibar were introduce in Example~\ref{Ex:omni_under_heterogenous_comm_det} and JE and MRDPG correspond to the Joint Embedding of Graphs of \textcite{wang2017} and the Multiple Random Dot Product Graph (MRDPG) of \textcite{nielsen2018}, respectively.
In addition, using the estimated community labels, we estimated the Mahalanobis distance between community centroids by $(\bar{\bvar{x}}_k - \bar{\bvar{x}}_{\ell})^T\hat{\Sigma}_{k,\ell}^{-1}(\bar{\bvar{x}}_k - \bar{\bvar{x}}_{\ell})$ where $\bar{\bvar{x}}_k$ is the average node embedding for community $k\in\{1, 2, 3\}$ and $\hat{\Sigma}_{k,\ell}$ is the standard pooled variance estimate.
We replicate this process 500 times.
The results of the simulation can be found in Figure~\ref{fig:multiple network clustering comp 2}. 

\begin{figure}
    \centering
    \includegraphics[width=.7\linewidth]{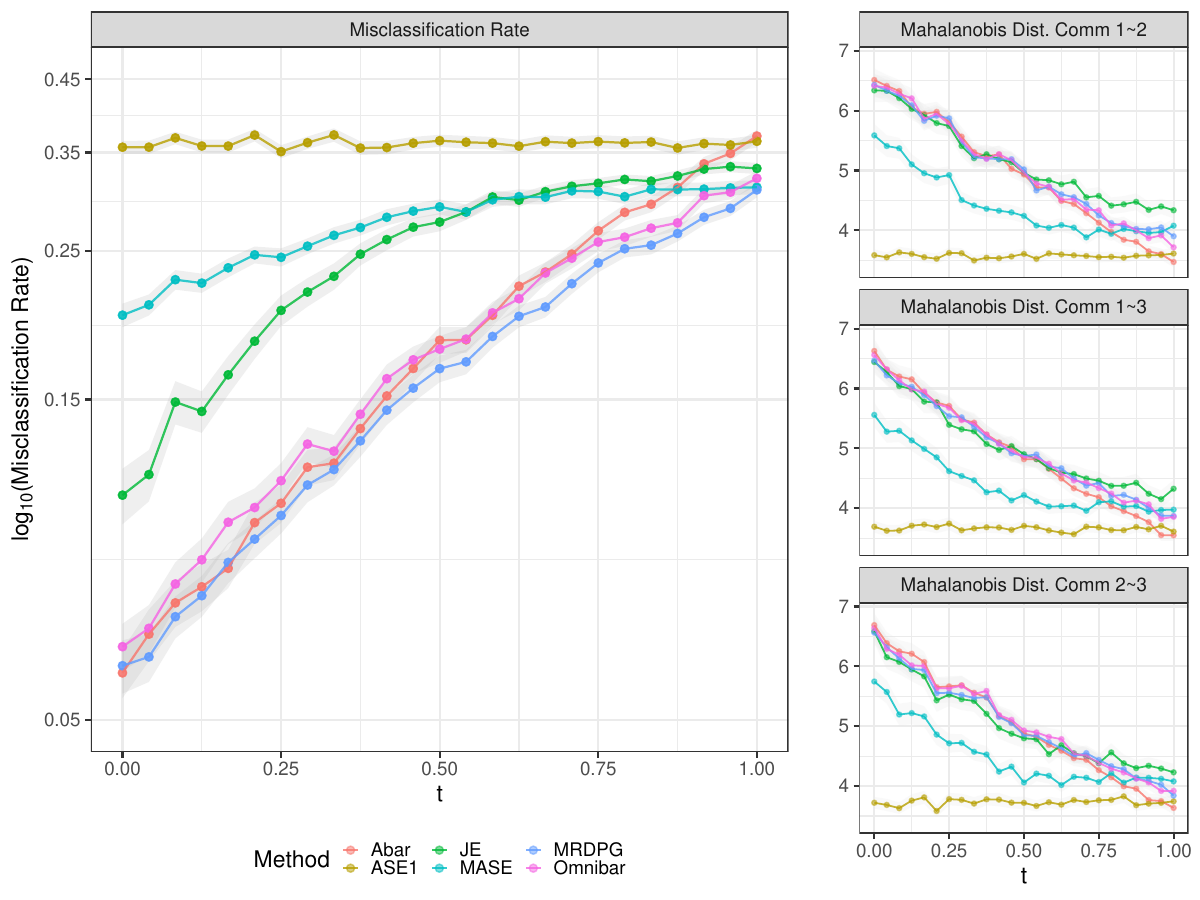}
    \caption{
    \textit{Left panel}: The misclassification rate for GMM clustering applied to node emebddings provided by several joint embedding techniques. 
    The vertical axis is the log-transformed missclassification rate, the horizontal axis corresponds to $t\in[0,1]$, and each technique is shaded a different color.
    \textit{Right panel}: The estimated Mahalanobis distance between community centroids for each pair $(k, \ell)$ for $k,\ell\in\{1,2,3\}$.}
    \label{fig:multiple network clustering comp 2}
\end{figure}

From Figure~\ref{fig:multiple network clustering comp 2} it is clear as the networks become more heterogeneous the performance of each joint embedding method declines.
As the first network is independent of $t$, the performance of ASE1 is constant with respect to $t$ and is included to compare joint embedding techniques to individual network embeddings.
Each joint embedding method outperforms the individual embedding approach, ASE1, for every value of $t\in[0,1]$.
The MRDPG, Omnibar, and Abar methods offer the best performance and are comparable for all values of $t\in[0,1]$. 
Of the joint embedding techniques, the JE and MASE techniques suffer the worst misclassification rate for homogeneous networks (i.e. $t = 0$) but appear to be relatively stable with respect to $t$. 

The performance of these methods is reflected in the Mahalanobis distance between centroids. 
ASE1 appears to separate the centroids the least for all $t\in[0,1]$ and hence suffers the worst classification accuracy.
The MRDPG, Abar, JE, and Omnibar embeddings all separate the centroids similarly and achieve comparable classification accuracy. 
Finally, MASE separates the centroids the least among joint embedding techniques and hence has the worse classification performance.
\qed
\end{ex}
Corollary~\ref{Cor:Centroid_Distance}, Corollary~\ref{Cor:exact_recovery_conditions}, and Example~\ref{Ex:mahalanobis_comm_det_example} establish that accurate community detection can be achieved by applying clustering algorithms to the Omnibar node embeddings in the heterogeneous network setting. 
These insights rely chiefly on the contents of Theorem~\ref{Main Bias Theorem} and Theorem~\ref{CLT} and suggest clustering algorithms that utilize the Omnibar node embeddings are competitive, if not preferable, when compared to other embedding techniques.

\subsection{Hypothesis Testing}
Under the ESRDPG, we parameterize network differences through the graph specific weighting matrices $\{\bvar{C}^{(g)}\}_{g=1}^m$. 
In this section, we consider the task of testing the hypothesis that two networks drawn from the ESRDPG share the same weighting matrix:
\begin{align*}
    H_0: \bvar{C}^{(g)} = \bvar{C}^{(k)} \quad \quad H_A: \bvar{C}^{(g)} \neq \bvar{C}^{(k)}. 
\end{align*}
For ease of notation, we let $g = 1$ and $k = 2$. 
Recall, as $\bvar{X}$ is full rank, $\bvar{C}^{(1)} = \bvar{C}^{(2)}$ if and only if $\bvar{X}\bvar{S}^{(1)} = \bvar{X}\bvar{S}^{(2)}$.
Therefore, an equivalent hypothesis test is given by 
\begin{align*}
    H_0: \bvar{XS}^{(1)} = \bvar{XS}^{(2)} \quad \quad H_A: \bvar{XS}^{(1)} \neq \bvar{XS}^{(2)}. 
\end{align*}
Theorem \ref{Main Bias Theorem} ensures that the matrix $\hat{\bvar{D}} = \hat{\bvar{X}}^{(1)} - \hat{\bvar{X}}^{(2)}$ will reflect differences in $\bvar{C}^{(1)}$ and $\bvar{C}^{(2)}$.
Thus, to asses $H_0$ a natural first step is to construct test statistics from the rows of $\hat{\bvar{D}}$.
We derive the asymptotic distribution of the rows of $\hat{\bvar{D}}$ in Corollary~\ref{Cor:Vertex_Differences_Test_Stat}.

\begin{Corollary}\label{Cor:Vertex_Differences_Test_Stat}
Let $i\in[n]$ be some row of $\hat{\bvar{D}}$ and let $\{\tilde{\bvar{W}}_n\}_{n=1}^{\infty}$ be as in Theorem \ref{CLT}.
Then, under $H_0$, we have the convergence
\begin{align*}
    \lim_{n\to\infty}\prob\left[\sqrt{n}(\hat{\bvar{D}}\tilde{\bvar{W}}_n - \bvar{X}(\bvar{S}^{(1)} - \bvar{S}^{(2)}))_i \leq \bvar{x}\right] = \int_{\text{supp}(F)}\Phi(\bvar{x}; \bvar{0}, \Sigma_{\text{D}}(\bvar{y}))dF(\bvar{y})
\end{align*}
The variance can be decomposed as $ \Sigma_{\text{D}}(\bvar{y}) =  \Sigma_{\text{D}}^{(N)}(\bvar{y}) +  \Sigma_{\text{D}}^{(M)}(\bvar{y})+  \Sigma_{\text{D}}^{(N,M)}(\bvar{y})$ and $\Sigma_{\text{D}}^{(N)}(\bvar{y})$ can be written explicitly as
\begin{align*}
    \Sigma_D^{(N)}(\bvar{y}) = \frac{m^2}{4}\Delta_S^{-1}\bar{\bvar{S}}(\tilde{\Sigma}_1(\bvar{y}) + \tilde{\Sigma}_2(\bvar{y}))\bar{\bvar{S}}\Delta_S^{-1}. 
\end{align*}
\end{Corollary}
\begin{proof}
The result follows from an application of Corollary~\ref{Cor:Asym_Joint_Distribution_Same_Vertex}.
\end{proof}

\textcite{OmniCLT} compare the test statistic $T = \|\hat{\bvar{D}}\|_F^2$ to a reference distribution constructed through Monte Carlo iterations under the null hypothesis.
In simulation settings, this test statistic demonstrates higher empirical power than a Procrustes based test introduced by \textcite{Semi-Par} that utilizes individual network embeddings. 
We stress that this statistic does not correct for row-wise correlation in $\hat{\bvar{D}}$ and relies on a reference distribution that is constructed with prior knowledge of the latent positions $\bvar{X}$.

In an attempt to remedy these issues, we propose a test statistic constructed from Wald statistics for each row of $\hat{\bvar{D}}$.
These statistics are estimated directly for the data and utilize the covariance expression presented in Corollary~\ref{Cor:Vertex_Differences_Test_Stat} to correct for row-wise variability.
We derive the asymptotic distribution of these Wald Statistics in Theorem \ref{Test Statistic W}.

\begin{theorem}\label{Test Statistic W}
Let $\bvar{D}(\bvar{x}) = (\bvar{S}^{(1)} - \bvar{S}^{(2)})\bvar{x}$ and let $F_{\chi^2_d}(x)$ be the cumulative distribution function of a $\chi^2$ random variable with $d$ degrees of freedom.
The asymptotic distribution of the statistic $W_i = \hat{\bvar{D}}_i^T\Sigma_D^{-1}(\bvar{X}_i)\hat{\bvar{D}}_i$ 
under both hypotheses is given by
\begin{align*}
    H_0:& \lim_{n\to\infty}\prob\left[nW_i \leq x\right] =  F_{\chi^2_d}(x)\\
    H_A:& \lim_{n\to\infty}\prob\left[\sqrt{n}[W_i  - \bvar{D}(\bvar{X}_i)^T\Sigma_D^{-1}(\bvar{X}_i)\bvar{D}(\bvar{X}_i)]\leq x\right]\\  
    &\hspace{10em}= \int_{\text{supp} F}\Phi(\bvar{x}; \bvar{0},4\bvar{D}(\bvar{y})^T\Sigma_D^{-1}(\bvar{y})\bvar{D}(\bvar{y}))dF(\bvar{y})
\end{align*}
\end{theorem}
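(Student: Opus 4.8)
The plan is to fix a vertex $i$, condition on its latent position $\bvar{X}_i = \bvar{y}$, reduce the statement to the continuous mapping theorem (under $H_0$) and the delta method (under $H_A$) applied to the conditional version of Corollary~\ref{Asy Covariances}, and finally integrate the resulting conditional limits against $F$. Conditioning as in Corollary~\ref{SBM Conditional}, Corollary~\ref{Asy Covariances} gives that $\sqrt{n}\big(\bvar{W}_n^T\hat{\bvar{D}}_i - \bvar{D}(\bvar{y})\big)$ converges in distribution to $\bvar{Z}\sim N(\bvar{0}, \Sigma_D(\bvar{y}))$, so I would write $\bvar{W}_n^T\hat{\bvar{D}}_i = \bvar{D}(\bvar{y}) + n^{-1/2}\bvar{Z}_n$ with $\bvar{Z}_n \Rightarrow \bvar{Z}$. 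Because $W_i$ is a quadratic form in the raw row $\hat{\bvar{D}}_i$, the first step is to reconcile the frame of $\hat{\bvar{D}}_i$ with that of the true covariance $\Sigma_D(\bvar{y})$: invoking the convergence of the alignment $\bvar{W}_n$ from the proof of Theorem~\ref{CLT}, $\bvar{W}_n$ acts trivially on $\Sigma_D(\bvar{y})$ in the limit (equivalently $\bvar{W}_n^T\Sigma_D^{-1}(\bvar{y})\bvar{W}_n \to \Sigma_D^{-1}(\bvar{y})$), so that $nW_i = n\hat{\bvar{D}}_i^T\Sigma_D^{-1}(\bvar{y})\hat{\bvar{D}}_i = \bvar{Z}_n^T\Sigma_D^{-1}(\bvar{y})\bvar{Z}_n + o_p(1)$ under $H_0$, and analogously under $H_A$.

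Under $H_0$ we have $\bvar{S}^{(1)} = \bvar{S}^{(2)}$, hence $\bvar{D}(\bvar{y}) = \bvar{0}$ and $\bvar{W}_n^T\hat{\bvar{D}}_i = n^{-1/2}\bvar{Z}_n$. Then $nW_i = \bvar{Z}_n^T\Sigma_D^{-1}(\bvar{y})\bvar{Z}_n + o_p(1)$, and whitening by $\Sigma_D^{-1/2}(\bvar{y})$ shows $\Sigma_D^{-1/2}(\bvar{y})\bvar{Z} \sim N(\bvar{0}, \bvar{I}_d)$, so the continuous mapping theorem yields $nW_i \Rightarrow \bvar{Z}^T\Sigma_D^{-1}(\bvar{y})\bvar{Z} \sim \chi^2_d$. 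Crucially this conditional limit is free of $\bvar{y}$, so the mixture over $F$ collapses and $\lim_{n\to\infty}\prob[nW_i \le x] = F_{\chi^2_d}(x)$.

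Under $H_A$ I would expand the quadratic form around $\bvar{D}(\bvar{y})$:
\begin{align*}
W_i &= \big(\bvar{D}(\bvar{y}) + n^{-1/2}\bvar{Z}_n\big)^T\Sigma_D^{-1}(\bvar{y})\big(\bvar{D}(\bvar{y}) + n^{-1/2}\bvar{Z}_n\big)\\
&= \bvar{D}(\bvar{y})^T\Sigma_D^{-1}(\bvar{y})\bvar{D}(\bvar{y}) + 2n^{-1/2}\bvar{D}(\bvar{y})^T\Sigma_D^{-1}(\bvar{y})\bvar{Z}_n + n^{-1}\bvar{Z}_n^T\Sigma_D^{-1}(\bvar{y})\bvar{Z}_n.
\end{align*}
Rearranging, $\sqrt{n}\big(W_i - \bvar{D}(\bvar{y})^T\Sigma_D^{-1}(\bvar{y})\bvar{D}(\bvar{y})\big) = 2\bvar{D}(\bvar{y})^T\Sigma_D^{-1}(\bvar{y})\bvar{Z}_n + n^{-1/2}\bvar{Z}_n^T\Sigma_D^{-1}(\bvar{y})\bvar{Z}_n$, where the last term is $O_p(n^{-1/2})$ and vanishes by Slutsky, while the linear term converges to a centered normal with variance $4\bvar{D}(\bvar{y})^T\Sigma_D^{-1}(\bvar{y})\Sigma_D(\bvar{y})\Sigma_D^{-1}(\bvar{y})\bvar{D}(\bvar{y}) = 4\bvar{D}(\bvar{y})^T\Sigma_D^{-1}(\bvar{y})\bvar{D}(\bvar{y})$. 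Keeping the (random) centering $\bvar{D}(\bvar{X}_i)^T\Sigma_D^{-1}(\bvar{X}_i)\bvar{D}(\bvar{X}_i)$ inside and integrating the conditional Gaussian limits against $F$ produces the stated mixture, since here the limiting variance genuinely depends on $\bvar{y}$ and does not collapse.

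The main obstacle I anticipate is the frame-alignment step flagged in the first paragraph: the statistic is defined through the raw row $\hat{\bvar{D}}_i$ while $\Sigma_D$ is the limiting covariance of the \emph{rotated} row $\bvar{W}_n^T\hat{\bvar{D}}_i$, so the argument hinges on showing that $\bvar{W}_n$ commutes with $\Sigma_D(\bvar{y})$ in the limit (or converges to the identity up to the symmetries of $\Sigma_D$); without this the limiting form $\bvar{Z}^T\bvar{W}_*^T\Sigma_D^{-1}\bvar{W}_*\bvar{Z}$ need not be $\chi^2_d$. A secondary technical point is making the passage from conditional to unconditional limits rigorous, which I would handle by observing that $nW_i$ (respectively the centered $\sqrt{n}$-scaled statistic) is a bounded-in-probability functional whose conditional law converges for $F$-a.e.\ $\bvar{y}$, so applying dominated convergence to $\prob[\,\cdot \le x\,] = \int_{\text{supp}(F)} \prob[\,\cdot \le x \mid \bvar{X}_i = \bvar{y}\,]\, dF(\bvar{y})$ delivers the mixture expressions.
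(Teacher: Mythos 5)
Your proposal is correct and follows essentially the same route as the paper: both decompose $\hat{\bvar{D}}_i$ into the bias term $\bvar{D}(\bvar{X}_i)$, an asymptotically Gaussian fluctuation of order $n^{-1/2}$ (the paper's $\bvar{N}_i - \bvar{N}_{n+i}$, your $n^{-1/2}\bvar{Z}_n$), and a negligible residual, then apply the continuous mapping theorem under $H_0$ and extract the dominant linear cross term under $H_A$ before integrating against $F$. If anything, your explicit flagging of the frame-alignment issue (that $\Sigma_D$ is the covariance of the \emph{rotated} row while $W_i$ is built from the raw row) is handled more carefully than in the paper, which passes over this point with the informal identity $W_i = \bvar{W}_n^T W_i \bvar{W}_n$.
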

\begin{proof}
The proof can be found Appendix~\ref{Corollaries and Statistical Consequences Appendix}. 
\end{proof}

These statistics are constructed for each vertex $i\in[n]$ but suggest a test statistic for the full network hypothesis $H_0:\bvar{XS}^{(1)} = \bvar{XS}^{(2)}$.
We propose using the test statistic $W = \sum_{i=1}^n W_i$ for evaluating $H_0$.
If each $W_i$ were independent for finite $n$, under the null $nW$ would follow a $\chi^2_{nd}$ distribution. 
However, results presented in Section~\ref{Section: Main Results} establish that $W_i$ and $W_j$ have a small covariance structure.
Nevertheless, we treat $nW$ as approximately distributed as $\chi^2_{nd}$ as a principled approach to testing $H_0: \bvar{C}^{(1)} = \bvar{C}^{(2)}$. 
This assumption effects the power of our test for small networks but this effect diminishes for moderate network sizes as demonstrated in Example~\ref{Ex:Hypothesis_Testing}.

To this point, the test statistic $W$ still relies on unknown model parameters, $\{\Sigma_D(\bvar{X}_i)\}_{i=1}^n$, which will need to be estimated in practice.
Following the argument presented in Section~\ref{Section: Main Results}, $\Sigma_D(\bvar{y})$ is dominated by $\Sigma_{D}^{(N)}(\bvar{y})$ presented in Corollary~\ref{Cor:Vertex_Differences_Test_Stat}.
Therefore, we propose a combination of method of moments estimators to estimate $\Sigma_D^{(N)}(\bvar{X}_i)$ under $H_0$ and use this as a plugin estimator for $\Sigma_D(\bvar{X}_i)$. 
Under the null hypothesis, $\Sigma_D^{(N)}(\bvar{X}_i)$ takes the form
\begin{align*}
    \Sigma_D^{(N)}(\bvar{X}_i) = \frac{\Delta^{-1}\tilde{\Sigma}(\bvar{X}_i)\Delta^{-1}}{2}
\end{align*}
where $\tilde{\Sigma}(\bvar{y}) = \E[(\bvar{y}^T\bvar{X}_j - (\bvar{y}^T\bvar{X}_j)^2)\bvar{X}_j\bvar{X}_j^T]$.
Due to Theorem 1, under the null hypothesis, $n^{-1}\tilde{\bvar{W}}_n\hat{\bvar{X}}^{(g)^T}\hat{\bvar{X}}^{(g)}\tilde{\bvar{W}}_n \overset{a.s.}{\longrightarrow} \Delta$ for $g = 1, 2$.
Therefore our estimator for $\Delta$ can be written as 
\begin{align*}
    \hat{\Delta} = \frac{1}{2n}\sum_{g=1}^2\hat{\bvar{X}}^{(g)T}\hat{\bvar{X}}^{(g)}.
\end{align*}
We estimate $\tilde{\Sigma}(\bvar{X}_i)$ using
\begin{align*}
    \hat{\tilde{\Sigma}}(\bvar{X}_i) = \frac{1}{2n}\sum_{g=1}^2\sum_{j=1}^n(\bar{\bvar{X}}_i^T\hat{\bvar{X}}_j^{(g)} - (\bar{\bvar{X}}_i^T\hat{\bvar{X}}_j^{(g)})^2) \hat{\bvar{X}}_j^{(g)}\hat{\bvar{X}}_j^{(g)T}
\end{align*}
Under a MANOVA null hypothesis, $H_0:\bvar{C}^{(1)} = \bvar{C}^{(2)} = \dots = \bvar{C}^{(m)}$, these estimators should include latent position from all $m$ networks, not just the two networks being compared.  
Combining these estimates, estimates for the precision $\hat{\Sigma}_D^{-1}(\bvar{X}_i)$ is written as
\begin{align*}
    \hat{\Sigma}_D(\bvar{X}_i)^{-1} = 2\hat{\Delta}\left(\hat{\tilde{\Sigma}}(\bvar{X}_i)\right)^{-1}\hat{\Delta}.
\end{align*}
Given our estimates for $\Sigma_D^{-1}(\bvar{X}_i)$, we define our estimates of $W_i$ and $W$ as 
\begin{align*}
\hat{W}_i= \hat{\bvar{D}}_i^T\hat{\Sigma}_D^{-1}(\bvar{X}_i)\hat{\bvar{D}}_i\hspace{4em} \hat{W} = \sum_{i=1}^n\hat{W}_i    
\end{align*}
Using this test statistic, we reject $H_0$ when $n\hat{W} > F^{-1}_{\chi^2_{nd}}(1-\alpha)$ where $F^{-1}_{\chi^2_{nd}}$ is the quantile function of a $\chi^2_{nd}$ random variable and $\alpha$ is the predetermined significance level. 
We note that $\hat{W}$ is purely a function of the data and can be estimated after having computed the omnibus embedding of $\{\bvar{A}^{(g)}\}_{g=1}^m$.
As the test statistic $T$ is computed based on Euclidean rather than Mahalanobis distances, we expect our test statistic will offer improvements in practice over that of $T$.
We also consider a level-corrected version of the $\hat{W}$ statistic, $\tilde{W}$.
These corrections are completed by choosing $c_n\in\N_0$ such that the critical value $\chi^2_{nd + c_n}$ achieves an $\alpha$-level rejection under $H_0$ for each value of $n$.
We compare the empirical power of $T$, $W$, $\hat{W}$, and $\tilde{W}$ in the following simulation setting. 

\begin{ex}\label{Ex:Hypothesis_Testing}

Suppose that $(\{\bvar{A}^{(g)}\}_{g=1}^2, \bvar{X})\sim\mathrm{ESRDPG}(F, n,\{\bvar{C}^{(g)}\}_{g=1}^2)$ where $F$ corresponds to a SBM with $K = 2$ groups. 
Let $\bvar{B}$ be the block probability matrix corresponding to latent position $\ell_1 = (0.39,-0.32)^T$ and $\ell_2 = (0.39,0.32)^T$ and suppose $\bvar{B}$ and the weighting matrices $\{\bvar{C}^{(g)}\}_{g=1}^m$ are given by 
\begin{align*}
    \bvar{B} = \begin{bmatrix} 0.25 & 0.05\\
    0.05 & 0.25
    \end{bmatrix}
    \quad 
    \bvar{C}^{(1)} = \bvar{I}\hspace{2em}\bvar{C}^{(2)}(t) = \begin{bmatrix}1+t & 0\\ 0 & 1-t\end{bmatrix}
\end{align*}
for $t\in [0,1]$. 
$t$ parameterizes the distance from homogeneity with where $t = 0$ corresponds to $H_0$ and $t\in (0,1]$ corresponds to $H_A$.
We sample networks of size $n \in \{50, 100, 200\}$ and test $H_0$ using $T$, $W$, $\hat{W}$, and $\tilde{W}$. 
We complete $1000$ Monte Carlo replicates and calculate the empirical power of each testing procedure.
The results of this simulations study can be found in Figure \ref{fig:Empirical Power}. 
\begin{figure}
    \centering
    \includegraphics[width = .9\linewidth, height = .45\linewidth]{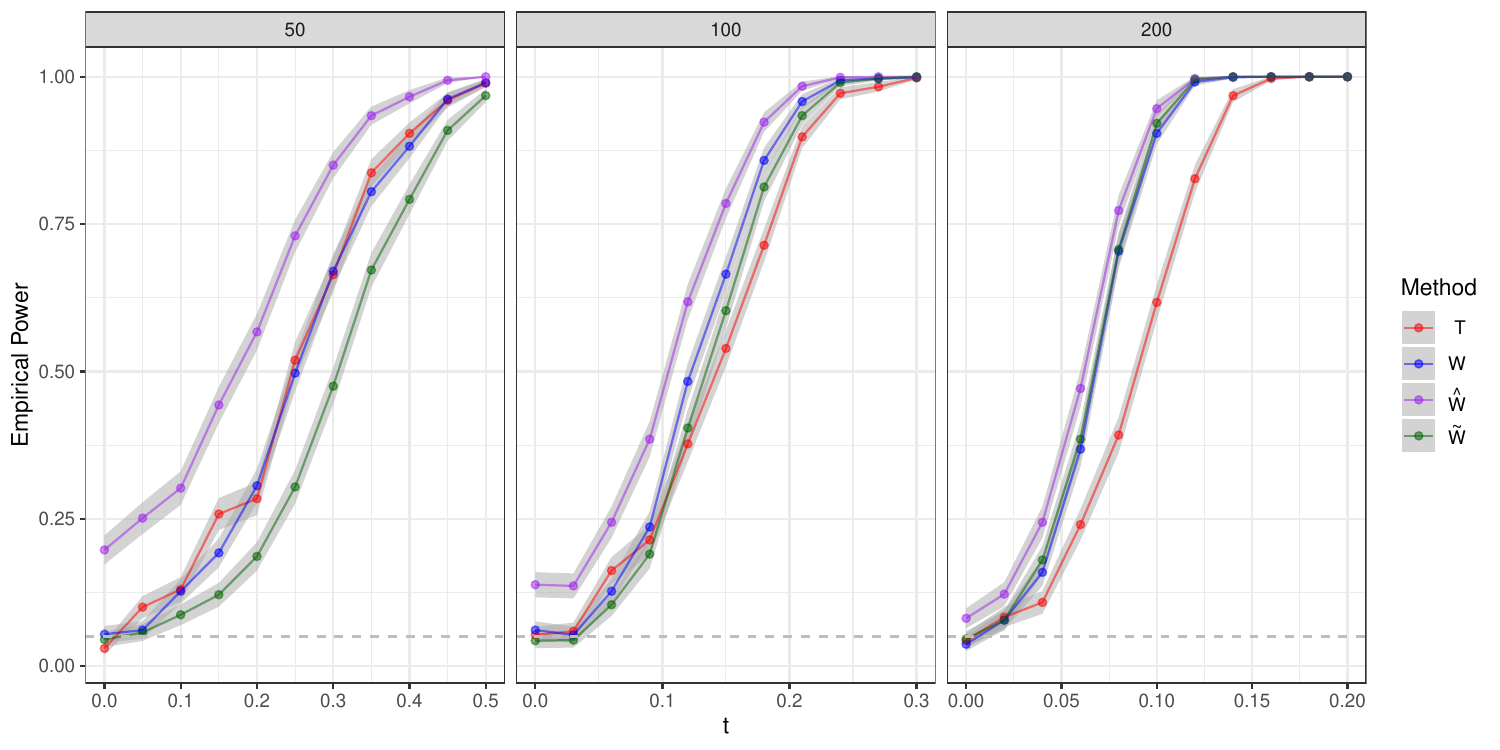}
    \caption{The empirical power of $T$, $W$, $\hat{W}$, and $\tilde{W}$ for testing $H_0: \bvar{I} = \bvar{C}(t)$ for differing values of $t$.}
    \label{fig:Empirical Power}
\end{figure}

First, as $t$ increases the power of each method achieves perfect power. 
Moreover, as $n$ increases, each method achieves perfect power for smaller values of $t$.
For networks of size $n = 50$, there appears to be no difference between the $T$ and $W$ test statistics but as the networks increase to moderate size, ($n = 100, 200$), our proposed test statistic $W$ outperforms the $T$ statistic.
Indeed, $W$ offers an average relative improvement in empirical power over $T$ of $\{1\%, 7\%, 21\%\}$ for networks of size $\{50, 100, 200$\}, respectively.
Both $T$ and $W$ achieve the correct level for each value of $n$ but the test statistic $\hat{W}$ is overpowered for small network sizes. 
This is due to the under estimation of the variance matrix $\Sigma_D(\bvar{x}_i)$.
As $n$ increases, our proposed estimator $\hat{\Sigma}_D(\bvar{x}_i)$ improves and $\hat{W}$ begins to achieve level. 
The corrected version of the $\hat{W}$ statistic, $\tilde{W}$, achieves similar level of power as that of $T$ for networks of size $n = 100$ and outperforms this statistic for networks of size $n = 200$.
The degree of freedom correction for $\tilde{W}$ was $(c_{50}, c_{100}, c_{200}) = (19, 13, 10)$. 
\qed
\end{ex}

The $\hat{W}$ test statistic is a fully data-dependent approach to testing the hypothesis $H_0: \bvar{C}^{(1)} = \bvar{C}^{(2)}$ yet does not achieve level for moderate network sizes.
The level corrected test statistic, $\tilde{W}$, offers comparable empirical power to semi-parametric testing approaches for moderate network sizes but relies on unknown model parameters. 
An estimation scheme for the degrees of freedom of the critical value for the level-correction of $\hat{W}$ will offer an improved fully data-dependent, parametric testing framework that achieves comparable empirical power to semi-parametric approaches.

\section{Discussion}\label{Section: Extensions and Discussion}

In this work we study the omnibus embedding under a heterogenous network model and establish its viability for multiple graph inference beyond the homogeneous network setting.  
We establish an explicit bias-variance tradeoff for latent position estimates provided by the omnibus embedding. 
We reveal an analytic bias expression, derive a uniform concentration bound on the residual term at a rate of $O(m^{3/2}n^{-1/2}\log  nm)$, and prove a central limit theorem which characterizes the distributional properties of the estimator.
These explicit bias and variance expressions enable us to state sufficient conditions for exact recovery in community detection tasks, determine appropriate clustering algorithms for community detection, and develop a test statistic to determine whether two graphs drawn from the ESRDPG share the same weighting matrices.
This analysis offers a first step in theoretically justifying the favorable performance of the omnibus embedding in the presence of heterogeneous network data. 

In what follows we provide remarks on possible extensions of Theorem~\ref{Main Bias Theorem} and Theorem~\ref{CLT} beyond the ESRDPG.
Specifically, we consider the possibility of reducing our assumptions on the weighting matrices $\mathbf{C}^{(g)}$ and the implications of asymptotics in the number of graphs $m$.

In Definition \ref{ESRDPG}, it is required that the $\{\bvar{C}^{(g)}\}_{g=1}^m$ are diagonal and nonnegative.
As discussed in Remark~\ref{Remark:Model Asummptions}, the results presented here can be extend to settings where the ESRDPG allows $\bvar{C}^{(g)}$ to have negative values.
This extension allows the ESRDPG to capture both assortative and disassortative community structures as in the Generalized Random Dot Product Graph of \parencite{rubindelanchy2017}.
By enriching the model class, however, $\tilde{\bvar{P}}$ isn't guaranteed to have $d$ positive eigenvalues which obfuscates the proper embedding approach and dimension.
However, under the assumption that there does not exist $i\in[d]$ such that $\bvar{C}_{ii}^{(g)} = c \leq  0$ for all $g\in[m]$, $\tilde{\bvar{P}}$ has $d$ positive eigenvalues and the results presented in Section~\ref{Section: Main Results} apply, after adjusting the requisite notation.
Furthermore, many of the results can be extended to embeddings which include negative eigenvalues of $\tilde{\bvar{A}}$, though this further complicates interpretations.

Further extending the ESRDPG to include non-diagonal $\bvar{C}^{(g)}$ complicates the interpretation and analysis of the scaling matrices $\bvar{S}^{(g)}$.
For example, if the $\bvar{C}^{(g)}$ are symmetric matrices, the scaling matrices $\bvar{S}^{(g)}$ are intricate functions of the eigenvectors and eigenvalues from the positive and negative definite part of the omnibus matrix of $\{\bvar{C}^{(g)}\}_{g=1}^m$.
While characterizing these matrices is possible, these matrices $\bvar{S}^{(g)}$ are not interpretable in terms of the ESRDPG model parameters.
Moreover, for general symmetric weighting matrices, it may be more appropriate to consider embedding methods that utilize both the negative and positive eigenvalues of $\tilde{\bvar{A}}$.
While we anticipate similar results as those presented here will extend to these embeddings, these approaches may produce node embeddings not in $\R^d$ and restrict our ability to analyze spectral embeddings as latent position estimators. 


Throughout, we assumed that the number of networks $m$ was of fixed size but we can readily extend the results to an asymptotic analysis in the number of networks, $m$. 
Considering the convergence rate presented in Theorem \ref{Main Bias Theorem}, $O(m^{3/2}n^{-1/2}\log nm)$ by letting $n = \omega(m^{3/2 + \xi})$ for $\xi > 0$ we still achieve asymptotic concentration in $m$.
If the number of nonzero $\{\bvar{C}_{ii}^{(g)}\}_{g=1}^m$ grows as $\Theta(m)$, for instance if each weighting matrix has strictly positive entries or if $\{\bvar{C}^{(g)}\}_{g=1}^m$ are sampled i.i.d.\ from a distribution over $\mathcal{C}_F$, then concentration presented in Theorem \ref{Main Bias Theorem} occurs at the rate consistent with \textcite{OmniCLT}, $O(m^{1/2}n^{-1/2}\log nm)$.
Hence, provided $n = \Theta(m^{1/2 + \xi})$ for $\xi>0$ concentration will occur asymptotically in $m$
\begin{align*}
    \max_{h\in[nm]}\|\bvar{R}_h\|_{2}\leq C\frac{\log m^{5/2 + \xi}}{m^{\xi}}.
\end{align*}
This result is of particular interest as we establish that the number of networks can dominate the number of vertices while still achieving concentration of the rows of the omnibus embedding.
This result suggests that the omnibus embedding may be useful in dynamical network applications where the weighting matrices are a discrete time stochastic process $\{\bvar{C}^{(t)}\}_{t=1}^T$.
This stochastic process could impose a dependence structure among edges across layers or among the scaling matrices $\{\bvar{S}^{(t)}\}_{t=1}^T$.
Characterizing this dependency structure for different stochastic processes models will result in a wide array of new theoretical questions as well as potential methodological developments for dynamical network models.  
A first step in this analysis has been presented in \textcite{pantazis2021importance}.

Finally, a full power analysis of the test statistic introduced in Section \ref{Section: Statistical Consequences} will provide further insight into our proposed testing paradigm.
Deriving guarantees on the covariance estimator $\hat{\Sigma}_D(\bvar{x}_i)$ will help in establishing the asymptotic distribution of $\hat{W}_i$ and by extension $\hat{W}$.
These asymptotic distributions could lead to a power analysis for this test statistic and offer insights into a data dependent choice of the degrees of freedom for test statistic $\tilde{W}$.
Moreover, as our analysis allows for testing the hypothesis $H_0: \bvar{C}^{(g)} = \bvar{C}^{(k)}$ for any $g\neq k$, this test provides groundwork for developing a full MANOVA framework for heterogeneous network data.
Finally, understanding the test's power with the alternative hypothesis falling outside the ESRDPG will further enrich the hypothesis testing framework supported by these theoretical findings.

\paragraph{Acknowledgements}

This material is based on research sponsored by the Air Force Research Laboratory and DARPA under agreement number FA8750-20-2-1001. The U.S. Government is authorized to reproduce and distribute reprints for Governmental purposes notwithstanding any copyright notation thereon. The views and conclusions contained herein are those of the authors and should not be interpreted as necessarily representing the official policies or endorsements, either expressed or implied, of the Air Force Research Laboratory and DARPA or the U.S. Government.

\printbibliography


\appendix

\section{Analysis Layout} \label{Layout Appendix}

Our main focus is on the rows of the matrix $\hat{\bvar{L}}\tilde{\bvar{W}}_n- \bvar{L}$ for a sequence of orthogonal matrices $\{\tilde{\bvar{W}}_n\}_{n=1}^{\infty}$. 
We propose that $\tilde{\bvar{W}}_n = \tilde{\bvar{V}}^T\tilde{\bvar{W}}_n^{*T}$ where $\tilde{\bvar{V}}$ and $\tilde{\bvar{W}}_n^{*}$ are rotation matrices to be introduced.
Define $\bvar{L}_S$ to be the $nm\times d$ block matrix whose $g$-th, $n\times d$ block is $\bvar{XS}^{(g)}$. 
Then by adding and subtracting this term, we arrive at our first moment expansion
\begin{align}\label{first_decomp}
    \hat{\bvar{L}}\tilde{\bvar{W}}_n - \bvar{L} = (\hat{\bvar{L}}\tilde{\bvar{W}}_n - \bvar{L}_S) + (\bvar{L}_S - \bvar{L}) =: \bvar{R} + (\bvar{L}_S - \bvar{L})
    \end{align}
From here we will prove the following. 
\begin{enumerate}
    \item The second term in \eqref{first_decomp}, $(\bvar{L}_S - \bvar{L})$, captures the asymptotic bias of the omnibus embedding and is a known matrix that is a function of the weighting matrices $\{\bvar{C}^{(g)}\}_{g=1}^m$ and the latent positions $\bvar{X}$. 
    \item Defining $\bvar{R} = \hat{\bvar{L}}\tilde{\bvar{W}}_n - \bvar{L}_S$ we intend to show 
    \begin{align*}
        \|\bvar{R}\|_{2,\infty}\leq O\left(m^{3/2}n^{-1/2}\log nm\right).
    \end{align*}
\end{enumerate}
We establish the first result directly by defining the $\{\bvar{S}^{(g)}\}_{g=1}^m$ in terms of the $\{\bvar{C}^{(g)}\}_{g=1}^m$.
We prove the second result through a series of perturbation arguments. 
Moving to the second moment, let $\bvar{Z} = \mathrm{ASE}(\tilde{\bvar{P}}, d)$ and consider the expansion 
\begin{align*}
    (\hat{\bvar{L}}\tilde{\bvar{W}}_n - \bvar{L}_S) &= (\hat{\bvar{L}} - \bvar{Z}\tilde{\bvar{V}})\tilde{\bvar{W}}_n + (\bvar{Z}\bvar{W}_n^{*T} - \bvar{L}_S) := \bvar{N} + \bvar{M}
\end{align*}
Heuristically, $\bvar{N}$ describes the variation between the eigenvectors of $\tilde{\bvar{A}}$ and $\tilde{\bvar{P}}$ while $\bvar{M}$ is due to variation between the eigenvectors of $n^{-1}\bvar{X}^T\bvar{X}$ and $\Delta$.
We address this first term, $\bvar{N}$, using a similar expansion given \textcite{OmniCLT}.
In particular, we consider the expansion 
\begin{equation}\label{eq:N_expansion}
    \bvar{N} = (\tilde{\bvar{A}} - \tilde{\bvar{P}})\bvar{U}_{\tilde{\bvar{P}}}\bvar{S}_{\tilde{\bvar{P}}}^{-1/2}\tilde{\bvar{V}}\tilde{\bvar{W}}_n + \bvar{R}^{(2)}\tilde{\bvar{W}}_n
\end{equation}
where $\bvar{R}^{(2)}$ is a further residual term that will converge in probability to zero after scaled by $\sqrt{n}$.
Following arguments from \textcite{EigenCLT} and \textcite{OmniCLT}, we will show the $\sqrt{n}$-scaled rows of the first term in \eqref{eq:N_expansion} will converge in distribution to a mixture of normal random variables with explicit covariance.
Finally, we will show $\sqrt{n}\bvar{M}$ converges to a matrix-vector product where the matrix contains normally distributed entries with degenerate covariances and the vector is a row of $\bvar{X}$.
For concreteness we consider the expansion 
\begin{align*}
    \hat{\bvar{L}}\tilde{\bvar{W}}_n - \bvar{L}_S = \bvar{M} + \bvar{N}
\end{align*}
from which we intend to prove the following
\begin{enumerate}
    \item 
    We will show $\sqrt{n}\bvar{M}|\{\bvar{X}_i = \bvar{x}_i\} \overset{D}{\longrightarrow}N(0, \Sigma_g^{(M)}(\bvar{x}_i))$
    where $\Sigma_g^{(M)}(\bvar{x}_i)$ is rank deficient.
    \item Following the analysis of \textcite{OmniCLT}, we intend to show $
    \sqrt{n}\bvar{N}|\{\bvar{X}_i = \bvar{x}_i\} \overset{D}{\longrightarrow}N(0, \Sigma_g^{(N)}(\bvar{x}_i))$.
\end{enumerate}
Having established the asymptotic bias and variance of the omnibus embedding estimates, we can use these results to prove corollaries and useful in subsequent statistical procedures. 
Before we proceed we provide a table of notation that we utilize in the Appendix in addition to Table~\ref{tab:notation_table2}.

\begin{table}[h!]
    \small
    \centering
    \begin{tabular}{|l|p{.6\textwidth}|}
    \hline
     \textbf{Symbol} & \textbf{Definition}\\
     \hline 
      $\tilde{\bvar{C}}\in\R^{dm\times dm}$ & The omnibus matrix of $\{\bvar{C}^{(g)}\}_{g=1}^m$\\
      $q\in[d]$ & The number of negative eigenvalues of $\tilde{\bvar{C}}$\\
      $\bvar{I}_{d,q}$ & Diagonal matrix with $d$, $1$s and $q$, $-1$s.\\
      $\mathbb{O}(d,q)$ & Set of indefinite orthogonal matrices with signature $(d, q)$\\
      $\bvar{S}_{\perp}^{(g)} = \kappa(2^{-1}[\bvar{C}^{(g)}\bvar{C}_m^{-1/4} - \bvar{C}_m^{1/4}])$ & The negative definition scaling matrix for graph $g\in[m]$ where $\kappa(\cdot)$ removes zero columns $\bvar{C}_m = m^{-1}\sum_{g=1}^m\bvar{C}^{(g)2}$\\
      $\bvar{S}\in\R^{md\times d}$, $\bvar{S}_{\perp}\in\R^{md\times q}$ & Block matrices of $[\bvar{S}^{(g)}]_{g=1}^m$ and $[\bvar{S}_{\perp}^{(g)}]_{g=1}^m$, respectively\\
      $\bvar{K} = [\bvar{S}|\bvar{S}_{\perp}]\in\R^{md\times (d+q)}$ & The concatenation of $\bvar{S}$ and $\bvar{S}_{\perp}$\\
      $\bvar{Z} = \bvar{U}_{\tilde{\bvar{P}}}|\bvar{S}_{\tilde{\bvar{P}}}|^{1/2}\in\R^{nm\times d}$ & The ASE of $\tilde{\bvar{P}}$\\
      $\bvar{Z}_{\perp} = \bvar{U}_{\tilde{\bvar{P}}}^{\perp}|\bvar{S}_{\tilde{\bvar{P}}}^{\perp}|^{1/2}\in\R^{nm\times q}$ & The matrix square root of the negative definite part of $\tilde{\bvar{P}}$\\
      \hline  
    \end{tabular}
    \caption{Notation used consistently throughout the Appendices. }
    \label{tab:notation_table2}
\end{table}

\section{First Moment}\label{First Moment Appendix}

To begin, let $\tilde{\bvar{P}}$ have eigendecomposition $\tilde{\bvar{P}} = [\bvar{U}_{\tilde{\bvar{P}}}|\bvar{U}_{\tilde{\bvar{P}}}^{\perp}](\bvar{S}_{\tilde{\bvar{P}}}\otimes \bvar{S}_{\tilde{\bvar{P}}}^{\perp})[\bvar{U}_{\tilde{\bvar{P}}}|\bvar{U}_{\tilde{\bvar{P}}}^{\perp}]^T$ where $\bvar{S}_{\tilde{\bvar{P}}}$ and $\bvar{S}_{\tilde{\bvar{P}}}^{\perp}$ are the diagonal matrices containing the positive and negative eigenvalues of $\tilde{\bvar{P}}$ in non-increasing order, respectively.
We anticipate that the rows of $\hat{\bvar{L}}$ will concentrate around those of $\bvar{U}_{\tilde{\bvar{P}}}\bvar{S}_{\tilde{\bvar{P}}}^{1/2}$, so our first goal is to relate this matrix to the latent positions $\bvar{X}$.
In doing so, we can analyze the difference between the rows of $\bvar{U}_{\tilde{\bvar{P}}}\bvar{S}_{\tilde{\bvar{P}}}^{1/2}$, properly rotated, and those of the weighted latent positions $\bvar{L}$.

First notice we may write $\tilde{\bvar{P}}$ as follows.
\begin{align*}
    \tilde{\bvar{P}} = \begin{bmatrix}
    \bvar{X} & \dots & \bvar{0}\\
    \vdots & \ddots & \vdots\\
    \bvar{0} & \dots & \bvar{X}
    \end{bmatrix}
    \begin{bmatrix}
    \bvar{C}^{(1)} & \dots & 2^{-1}[\bvar{C}^{(1)} + \bvar{C}^{(m)}]\\
    \vdots & \ddots & \vdots\\
    2^{-1}[\bvar{C}^{(1)} + \bvar{C}^{(m)}] & \dots &\bvar{C}^{(m)}
    \end{bmatrix}
    \begin{bmatrix}
    \bvar{X}^T & \dots & \bvar{0}\\
    \vdots & \ddots & \vdots\\
    \bvar{0} & \dots & \bvar{X}^T
    \end{bmatrix}.
\end{align*}
Therefore, $\tilde{\bvar{P}} = (\bvar{I}\otimes\bvar{X})\tilde{\bvar{C}}(\bvar{I}\otimes\bvar{X})^T$ where $\tilde{\bvar{C}}$ is the omnibus matrix of $\{\bvar{C}^{(g)}\}_{g=1}^m$.
Thus, a first step in characterizing the spectral structure of $\tilde{\bvar{P}}$ is to establish the spectral properties of $\tilde{\bvar{C}}$.
This is the focus of Lemma~\ref{Lemma:Ctil_properties} and Lemma~\ref{Lemma:S_definition}.

\begin{Lemma}\label{Lemma:Ctil_properties}
For some $d$-dimensional inner product distribution $F$, let $\{\bvar{C}^{(g)}\}_{g=1}^m\subset \mathcal{C}_F$.
Then, assuming $\min_{i\in[d]}\max_{g\in[m]}\bvar{C}^{(g)}_{ii} > 0$, $\tilde{\bvar{C}}$ has signature $(d, q)$ where $q\in[d]$. 
\end{Lemma}

\begin{proof}
Let $\lambda(\bvar{M})$ denote the multi-set of non-zero eigenvalues of $\bvar{M}$ and recall $\lambda(\bvar{AB}) = \lambda(\bvar{BA})$.
Let $\bvar{C} = [\bvar{C}^{(1)},\bvar{C}^{(2)},\dots, \bvar{C}^{(m)}]^T\in\R^{md\times d}$ and $\tilde{\bvar{I}} = [\bvar{I}, \bvar{I}, \dots, \bvar{I}]^T\in\R^{md\times d}$.
Notice we can write $\tilde{\bvar{C}}$ as 
\begin{align*}
    \tilde{\bvar{C}} = \frac{1}{2}\begin{bmatrix}
    \bvar{C} & \tilde{\bvar{I}}
    \end{bmatrix}
    \begin{bmatrix}
    \bvar{0} & \bvar{I}\\
    \bvar{I} & \bvar{0}
    \end{bmatrix}
    \begin{bmatrix}
    \bvar{C} & \tilde{\bvar{I}}
    \end{bmatrix}^T.
\end{align*}
Therefore, $\lambda(\tilde{\bvar{C}})$ can be computed by considering the following
\begin{align*}
    \lambda(\tilde{\bvar{C}}) &= \lambda\left(\frac{1}{2}\begin{bmatrix}
    \bvar{C} & \tilde{\bvar{I}}
    \end{bmatrix}^T\begin{bmatrix}
    \bvar{C} & \tilde{\bvar{I}}
    \end{bmatrix}\begin{bmatrix}
    \bvar{0} & \bvar{I}\\
    \bvar{I} & \bvar{0}
    \end{bmatrix}\right) = 
    \lambda\left(\frac{1}{2}\begin{bmatrix}
    \bvar{C}^T\tilde{\bvar{I}} & \bvar{C}^T\bvar{C}\\
    \tilde{\bvar{I}}^T\tilde{\bvar{I}} & \tilde{\bvar{I}}^T\bvar{C}
    \end{bmatrix}\right).
\end{align*}
Each block of this matrix is diagonal so writing the matrix as a sum of the $(i,i)$-th element of each block, we have 
\begin{align*}
    \lambda(\tilde{\bvar{C}}) = \lambda\left(\frac{1}{2}\sum_{i=1}^d\begin{bmatrix}
    \bvar{v}_i^T\bvar{1}_m & \bvar{v}_i^T\bvar{v}_i\\
    \bvar{1}_m^T\bvar{1}_m & \bvar{1}_m^T\bvar{v}_i\\
    \end{bmatrix} \otimes \bvar{e}_i\bvar{e}_i^T\right).
\end{align*}
As this is an orthogonal decomposition (e.g. $\bvar{e}_i\bvar{e}_i^T\bvar{e}_j\bvar{e}_j^T = 0$ for all $i\neq j$), the non-zero eigenvalues of $\tilde{\bvar{C}}$ are the union of the eigenvalues of the summands.
That is, 
\begin{align*}
    \lambda(\tilde{\bvar{C}}) = \bigcup_{i=1}^d\lambda\left(\frac{1}{2}\begin{bmatrix}
    \bvar{v}_i^T\bvar{1}_m & \bvar{v}_i^T\bvar{v}_i\\
    \bvar{1}_m^T\bvar{1}_m & \bvar{1}_m^T\bvar{v}_i\\
    \end{bmatrix} \otimes \bvar{e}_i\bvar{e}_i^T\right)
\end{align*}
Finally, let $\bvar{H}(\bvar{x}) = 2^{-1}(\bvar{x}\bvar{1}_m^T + \bvar{1}_m\bvar{x}^T)$.
By direct calculation, the eigenvalues of $\bvar{H}(\bvar{x})$ are given by $\lambda(\bvar{H}(\bvar{x})) = 2^{-1}(\bvar{x}^T\bvar{1}_m \pm \sqrt{m}\|\bvar{x}\|_2)$.
Moreover, as the eigenvalues of a Kronecker product are the product of the eigenvalues, we can write 
\begin{align*}
    \lambda(\tilde{\bvar{C}}) = \bigcup_{i=1}^d\lambda\left(\bvar{H}(\bvar{v}_i) \otimes \bvar{e}_i\bvar{e}_i^T\right) = \bigcup_{i=1}^d\left\{\frac{1}{2}\left(\bvar{v}_i^T\bvar{1}_m \pm \sqrt{m}\|\bvar{v}_i\|_2\right)\right\}.
\end{align*}
Under the ESRDPG, $\min_{i\in[d]}\max_{g\in[m]}\bvar{C}^{(g)}_{ii} > 0$ ensuring $\bvar{v}_i^T\bvar{1}_m> 0$ and $\|\bvar{v}_2\|_2> 0$ for all $i\in[d]$.
Moreover, as each $\bvar{C}_{ii}^{(g)}\geq 0$, we can also write $\bvar{v}_i^T\bvar{1}_m = \|\bvar{v}_i\|_1$.
Hence, as $\|\bvar{v}_i\|_1 \leq \sqrt{m}\|\bvar{v}_i\|_2$, for all $i\in[d]$
\begin{align*}
    \frac{1}{2}(\|\bvar{v}_i\|_1 - \sqrt{m}\|\bvar{v}_i\|_2) \leq 0 < \frac{1}{2}(\|\bvar{v}_i\|_1 + \sqrt{m}\|\bvar{v}_i\|_2).
\end{align*}
Therefore, $\tilde{\bvar{C}}$ has signature $(d, q)$ where $q\in[d]$.
Moreover, as $\|\bvar{v}_i\|_1 - \sqrt{m}\|\bvar{v}_i\|_2 = 0$ occurs only when $\bvar{v}_i = c\bvar{1}_m$ for some $c>0$, $q$ is the number of dimensions where the set $\{\bvar{C}^{(g)}_{ii}\}_{i=1}^d$ do not equal the same number. 
\end{proof}

As $\tilde{\bvar{C}}$ has signature $(d, q)$, it will be useful to introduce the set of indefinite orthogonal matrices $\mathbb{O}(d, q)$ given by
\begin{align*}
    \mathbb{O}(d, q) &= \{\bvar{Q}\in\R^{(d+q)\times (d+q)}:\bvar{Q}\bvar{I}_{d,q}\bvar{Q}^T = \bvar{I}_{d, q}\}\\
    \bvar{I}_{d,q} &= \text{diag}(\underbrace{1, \ldots, 1}_{\text{$d$, $1$s}}, \underbrace{-1, \ldots, -1}_{\text{$q$,  $-1$s}})
\end{align*}
As $\tilde{\bvar{C}}$ is indefinite, we will frequently encounter matrices of the form $\bvar{M}\bvar{I}_{d,q}\bvar{M}^T$.
Identification of the matrix $\bvar{M}$ can only be completed up to an indefinite orthogonal rotation as $\bvar{M}\bvar{I}_{d,q}\bvar{M}^T = (\bvar{MQ})\bvar{I}_{d,q}(\bvar{MQ})^T$. 
When $q = 0$, $\mathbb{O}(d, 0) = \mathcal{O}^{(d)}$ and this non-identifiability is constrained to an orthogonal matrix $\bvar{W}\in\mathcal{O}^{(d)}$.
With this observations, we're ready to relate $\mathrm{ASE}(\tilde{\bvar{C}}, d)$ to the weighting matrices $\{\bvar{C}^{(g)}\}_{g=1}^m$ which enable us to analytically express the bias in Theorem~\ref{Main Bias Theorem}.

\begin{Lemma}\label{Lemma:S_definition}
Let $\tilde{\bvar{C}}$ have eigendecomposition $[\bvar{U}_{\tilde{\bvar{C}}}|\bvar{U}_{\tilde{\bvar{C}}}^{\perp}][\Lambda_{\tilde{\bvar{C}}}\oplus\Lambda_{\tilde{\bvar{C}}}^{\perp}][\bvar{U}_{\tilde{\bvar{C}}}|\bvar{U}_{\tilde{\bvar{C}}}^{\perp}]^T$ where $\Lambda_{\tilde{\bvar{C}}}\in\R^{d\times d}$ and $\Lambda_{\tilde{\bvar{C}}}^{\perp}\in\R^{q\times q}$ are the positive and negative eigenvalues, respectively, ordered in non-increasing order.
Define $\bvar{H} = \bvar{U}_{\tilde{\bvar{C}}}|\Lambda_{\tilde{\bvar{C}}}|^{1/2}$ and $\bvar{H}_{\perp} = \bvar{U}_{\tilde{\bvar{C}}}^{\perp}|\Lambda_{\tilde{\bvar{C}}}^{\perp}|^{1/2}$.
Denote the $g$-th $d\times d$ block of $\bvar{H}$ as $\bvar{H}^{(g)}$ and the $g$-th $d\times q$ block of $\bvar{H}_{\perp}$ as $\bvar{H}_{\perp}^{(g)}$.
For each $g\in[m]$ define 
\begin{align*}
    \bvar{S}^{(g)}= \frac{\bvar{C}^{(g)}\bvar{C}_m^{-1/4} + \bvar{C}_m^{1/4}}{2} \quad \quad \bvar{S}_{\perp}^{(g)} =\kappa\left( \frac{\bvar{C}^{(g)}\bvar{C}_m^{-1/4} - \bvar{C}_m^{1/4}}{2}\right)
\end{align*}
where $\kappa:\R^{d\times d}\to\R^{d\times q}$ is the function that removes zero columns.
Then, there exists orthogonal matrices $\bvar{W}_H\in\mathcal{O}^{(d)}$ and $\bvar{W}^{\perp}_H\in\mathcal{O}^{(q)}$ such that $\bvar{H}^{(g)}\bvar{W}_H = \bvar{S}^{(g)}$ and $\bvar{H}_{\perp}^{(g)}\bvar{W}_H^{\perp} = \bvar{S}_{\perp}^{(g)}$.
\end{Lemma}

\begin{proof}
First notice, under the assumption $\min_{i\in[d]}\max_{g\in[m]} \bvar{C}^{(g)}_{ii} > 0$, that $(\bvar{C}_m)_{ii} > 1$ for all $i\in[d]$.
Hence $\bvar{C}_m^{-1/4}$ is well defined.
Define $\bvar{S} = [\bvar{S}^{(1)T},\ldots, \bvar{S}^{(m)T}]^T\in\R^{md\times d}$ and $\bvar{S}_{\perp} = [\bvar{S}_{\perp}^{(1)T},\ldots, \bvar{S}_{\perp}^{(m)T}]^T\in\R^{md\times q}$.
Notice the $(g, k)$-th, $d\times d$ block of $[\bvar{S}|\bvar{S}_{\perp}]\bvar{I}_{d,q}[\bvar{S}|\bvar{S}_{\perp}]^T$ 
\begin{align*}
 \left([\bvar{S}|\bvar{S}_{\perp}]\bvar{I}_{d,q}[\bvar{S}|\bvar{S}_{\perp}]^T\right)_{gk} &= \frac{1}{4}[\bvar{C}^{(g)}\bvar{C}^{(k)}\bvar{C}_m^{-1/2} + \bvar{C}^{(g)} + \bvar{C}^{(k)} + \bvar{C}_m^{1/2}]\\
 &- \frac{1}{4}[\bvar{C}^{(g)}\bvar{C}^{(k)}\bvar{C}_m^{-1/2} - \bvar{C}^{(g)} - \bvar{C}^{(k)} - \bvar{C}_m^{1/2}]\\
 &= \frac{\bvar{C}^{(g)} + \bvar{C}^{(k)}}{2}.
\end{align*}
As this equality holds for all $g,k\in[m]$ we have
\begin{align*}
    \tilde{\bvar{C}}= [\bvar{H}|\bvar{H}_{\perp}]\bvar{I}_{d,q}[\bvar{H}|\bvar{H}_{\perp}]^T = [\bvar{S}|\bvar{S}_{\perp}]\bvar{I}_{d,q}[\bvar{S}|\bvar{S}_{\perp}]^T.
\end{align*}
Next, consider the products $\bvar{S}^T\bvar{S}_{\perp}$, $\bvar{S}^T\bvar{S}$, and $\bvar{S}_{\perp}^T\bvar{S}_{\perp}$
\begin{align*}
    \bvar{S}^T\bvar{S}_{\perp} &= \frac{1}{4}\sum_{g=1}^m\left((\bvar{C}^{(g)})^2\bvar{C}_m^{-1/2} - \bvar{C}^{(g)} + \bvar{C}^{(g)} - \bvar{C}_m^{1/2}\right)= \left(\frac{m}{4}\bvar{C}_m\bvar{C}_m^{-1/2} - \frac{m}{4}\bvar{C}_m^{1/2}\right) = 0\\
    \bvar{S}^T\bvar{S} &= \frac{1}{4}\sum_{g=1}^m\left((\bvar{C}^{(g)})^2\bvar{C}_m^{-1/2} + 2\bvar{C}^{(g)} + \bvar{C}_m^{1/2}\right) =  \frac{m}{2}(\bvar{C}_m^{1/2} +  \bar{\bvar{C}})\\ 
    \bvar{S}_{\perp}^T\bvar{S}_{\perp} &= \kappa\left(\frac{1}{4}\sum_{g=1}^m\left((\bvar{C}^{(g)})^2\bvar{C}_m^{-1/2} - 2\bvar{C}^{(g)} + \bvar{C}_m^{1/2}\right)\right)
        = \kappa\left(\frac{m}{2}(\bvar{C}_m^{1/2} - \bar{\bvar{C}})\right).
\end{align*}
As $\bvar{C}_m$ is full rank, we see that $\text{rank}(\bvar{S}) = d$ and $\text{rank}(\bvar{S}_{\perp}) = q$.
This observation with the fact $\bvar{S}_{\perp}^T\bvar{S} = \bvar{0}$ implies that $[\bvar{S}|\bvar{S}_{\perp}]$ is full rank. 
Next, using notation from Lemma~\ref{Lemma:Ctil_properties}, notice $\bvar{C}_m^{1/2} = m^{-1/2}\text{diag}(\|\bvar{v}_i\|_2)$ and $\bar{\bvar{C}} = m^{-1}\text{diag}(\bvar{v}_i^T\bvar{1}_m)$ and therefore
\begin{align*}
\bvar{S}^T\bvar{S} &= \frac{m}{2}\left(\bar{\bvar{C}} + \bvar{C}_m^{1/2}\right) = \text{diag}\left(\frac{\bvar{v}_i^T\bvar{1}_m + \sqrt{m}\|\bvar{v}_i\|_2}{2}\right)\\
-\bvar{S}_{\perp}^T\bvar{S}_{\perp} &= \kappa\left[\frac{m}{2}\left(\bar{\bvar{C}} - \bvar{C}_m^{1/2}\right)\right] = \kappa\left[\text{diag}\left(\frac{\bvar{v}_i^T\bvar{1}_m - \sqrt{m}\|\bvar{v}_i\|_2}{2}\right)\right]
\end{align*}
which are exactly the positive and negative eigenvalues of $\tilde{\bvar{C}}$.
Therefore, there exists permutation matrices $\bvar{Q}_1\in\mathcal{O}^{(d)}$ and $\bvar{Q}_2\in\mathcal{O}^{(q)}$ that satisfy
\begin{align*}
\bvar{Q}_1\bvar{S}^T\bvar{S}\bvar{Q}_1^T &= \Lambda_{\tilde{\bvar{C}}}\\ \bvar{Q}_2\bvar{S}_{\perp}^T\bvar{S}_{\perp}\bvar{Q}_2^T &= |\Lambda_{\tilde{\bvar{C}}}^{\perp}|    
\end{align*}
Define $\bvar{Q} = \bvar{Q}_1\oplus\bvar{Q}_2$, and notice $\bvar{Q}\in\mathcal{O}^{(d+q)}\cap\mathbb{O}(d, q)$.
With these observations, we can can write 
\begin{align*}
    \bvar{Q}[\bvar{S}|\bvar{S}_{\perp}]^T[\bvar{S}|\bvar{S}_{\perp}]\bvar{Q}^T =
    \begin{bmatrix} 
    \Lambda_{\tilde{\bvar{C}}} & \bvar{0}\\
    \bvar{0} & |\Lambda_{\tilde{\bvar{C}}}^{\perp}|
    \end{bmatrix} 
\end{align*}
Set $\bvar{U} = [\bvar{S}|\bvar{S}_{\perp}]([\bvar{S}|\bvar{S}_{\perp}]^T[\bvar{S}|\bvar{S}_{\perp}])^{-1/2}\bvar{Q}^T$ and notice 
\begin{align*}
    \tilde{\bvar{C}}\bvar{U} &= [\bvar{S}|\bvar{S}_{\perp}]\bvar{I}_{d,q}[\bvar{S}|\bvar{S}_{\perp}]^T[\bvar{S}|\bvar{S}_{\perp}]([\bvar{S}|\bvar{S}_{\perp}]^T[\bvar{S}|\bvar{S}_{\perp}])^{-1/2}\bvar{Q}^T\\
    &= [\bvar{S}|\bvar{S}_{\perp}]\bvar{I}_{d,q}([\bvar{S}|\bvar{S}_{\perp}]^T[\bvar{S}|\bvar{S}_{\perp}])^{1/2}\bvar{Q}^T\\
    &= [\bvar{S}|\bvar{S}_{\perp}]\bvar{Q}^T\bvar{Q}\bvar{I}_{d,q}\bvar{Q}^T\bvar{Q}([\bvar{S}|\bvar{S}_{\perp}]^T[\bvar{S}|\bvar{S}_{\perp}])^{1/2}\bvar{Q}^T\\
    &= [\bvar{S}|\bvar{S}_{\perp}]\bvar{Q}^T\bvar{I}_{d,q}(\Lambda_{\tilde{\bvar{C}}}^{1/2}\oplus |\Lambda_{\tilde{\bvar{C}}}^{\perp}|^{1/2})\\
    &= [\bvar{S}|\bvar{S}_{\perp}]\bvar{Q}^T(\Lambda_{\tilde{\bvar{C}}}\oplus |\Lambda_{\tilde{\bvar{C}}}^{\perp}|)^{-1/2}\bvar{Q}\bvar{Q}^T(\Lambda_{\tilde{\bvar{C}}}\oplus \Lambda_{\tilde{\bvar{C}}}^{\perp})\\
    &= [\bvar{S}|\bvar{S}_{\perp}]([\bvar{S}|\bvar{S}_{\perp}]^T[\bvar{S}|\bvar{S}_{\perp}])^{-1/2}\bvar{Q}^T(\Lambda_{\tilde{\bvar{C}}}\oplus \Lambda_{\tilde{\bvar{C}}}^{\perp})\\
    &= \bvar{U}(\Lambda_{\tilde{\bvar{C}}}\oplus \Lambda_{\tilde{\bvar{C}}}^{\perp})\\
\end{align*}
Therefore for some $\bvar{W}\in\mathcal{O}^{(d+q)}\cap\mathbb{O}(d, q)$ that commutes with $\Lambda_{\tilde{\bvar{C}}}\oplus \Lambda_{\tilde{\bvar{C}}}^{\perp}$ we see that $[\bvar{U}_{\tilde{\bvar{C}}}|\bvar{U}_{\tilde{\bvar{C}}}^{\perp}]\bvar{W} = [\bvar{S}|\bvar{S}_{\perp}]([\bvar{S}|\bvar{S}_{\perp}]^T[\bvar{S}|\bvar{S}_{\perp}])^{-1/2}\bvar{Q}^T$.
As $\Lambda_{\tilde{\bvar{C}}}$ and $\Lambda_{\tilde{\bvar{C}}}^{\perp}$ contains the positive and negative eigenvalues of $\tilde{\bvar{C}}$, respectively, we can partition $\bvar{W} = \bvar{W}_1\oplus \bvar{W}_2$ where $\bvar{W}_1\in\mathcal{O}^{(d)}$ and $\bvar{W}_2\in\mathcal{O}^{(q)}$.
Finally, we have 
\begin{align*}
    [\bvar{H}|\bvar{H}_{\perp}]\bvar{W}\bvar{Q} &= [\bvar{U}_{\tilde{\bvar{C}}}|\bvar{U}_{\tilde{\bvar{C}}}^{\perp}](\Lambda_{\tilde{\bvar{C}}}^{1/2}\oplus |\Lambda_{\tilde{\bvar{C}}}^{\perp}|^{1/2})\bvar{WQ}\\
    &=[\bvar{U}_{\tilde{\bvar{C}}}\bvar{W}_1|\bvar{U}_{\tilde{\bvar{C}}}^{\perp}\bvar{W}_2](\Lambda_{\tilde{\bvar{C}}}^{1/2}\oplus |\Lambda_{\tilde{\bvar{C}}}^{\perp}|^{1/2})\bvar{Q}\\
    &= [\bvar{S}|\bvar{S}_{\perp}]([\bvar{S}|\bvar{S}_{\perp}]^T[\bvar{S}|\bvar{S}_{\perp}])^{-1/2}\bvar{Q}^T(\Lambda_{\tilde{\bvar{C}}}^{1/2}\oplus |\Lambda_{\tilde{\bvar{C}}}^{\perp}|^{1/2})\bvar{Q}\\
    &= [\bvar{S}|\bvar{S}_{\perp}]([\bvar{S}|\bvar{S}_{\perp}]^T[\bvar{S}|\bvar{S}_{\perp}])^{-1/2}([\bvar{S}|\bvar{S}_{\perp}]^T[\bvar{S}|\bvar{S}_{\perp}])^{1/2}\\
    &= [\bvar{S}|\bvar{S}_{\perp}].
\end{align*}
Defining $\bvar{W}_H = \bvar{W}_1\bvar{Q}_1\in\mathcal{O}^{(d)}$ and $\bvar{W}_H^{\perp} = \bvar{W}_2\bvar{Q}_2\in\mathcal{O}^{(q)}$ concludes the proof. 
\end{proof}

Having characterized the spectral structure of $\tilde{\bvar{C}}$, we now look to extend these findings to study the spectral structure of $\tilde{\bvar{P}}$.
Notice as $\text{rank}(\bvar{X}) = d$ that $\bvar{X}^T\bvar{X}$ is invertible.
Therefore, we see that 
\begin{align*}
    \lambda(\tilde{\bvar{P}}) = \lambda[(\bvar{I}\otimes \bvar{X})\tilde{\bvar{C}}(\bvar{I}\otimes \bvar{X})^T] = \lambda[(\bvar{I}\otimes (\bvar{X}^T\bvar{X})^{1/2})\tilde{\bvar{C}}(\bvar{I}\otimes (\bvar{X}^T\bvar{X})^{1/2})]
\end{align*}
and by Sylveter's law of inertia, $\tilde{\bvar{P}}$ and $\tilde{\bvar{C}}$ have the same signature $(d,q)$.
That is, under the ESRDPG, $\tilde{\bvar{P}}$ can be indefinite with $q\in[0, d]$ negative eigenvalues characterized by Lemma~\ref{Lemma:Ctil_properties}.
Moreover, as we establish in Lemma~\ref{Lemma:Ctil_properties} $\tilde{\bvar{C}} = [\bvar{S}|\bvar{S}_{\perp}]\bvar{I}_{d,q}[\bvar{S}|\bvar{S}_{\perp}]^T$ which implies
\begin{align*}
    \tilde{\bvar{P}} = [(\bvar{I}\otimes \bvar{X})\bvar{S}|(\bvar{I}\otimes \bvar{X})\bvar{S}_{\perp}]\bvar{I}_{d,q}[(\bvar{I}\otimes \bvar{X})\bvar{S}|(\bvar{I}\otimes \bvar{X})\bvar{S}_{\perp}]^T
\end{align*}
Therefore, we look to relate the eigenvectors corresponding to the positive eigenvalues of $\tilde{\bvar{P}}$ to $(\bvar{I}\otimes\bvar{X})\bvar{S}$.
We specify this relation in Lemma~\ref{Lemma:indefinite_omni}.

\begin{Lemma}\label{Lemma:indefinite_omni}
Let $\tilde{\bvar{P}} = [\bvar{U}_{\tilde{\bvar{P}}}|\bvar{U}_{\tilde{\bvar{P}}}^{\perp}](\bvar{S}_{\tilde{\bvar{P}}}\otimes \bvar{S}_{\tilde{\bvar{P}}}^{\perp})[\bvar{U}_{\tilde{\bvar{P}}}|\bvar{U}_{\tilde{\bvar{P}}}^{\perp}]^T$ and define
$\bvar{Z} = \bvar{U}_{\tilde{\bvar{P}}}\bvar{S}_{\tilde{\bvar{P}}}^{1/2}$ and $\bvar{Z}_{\perp} = \bvar{U}_{\tilde{\bvar{P}}}^{\perp}|\bvar{S}_{\tilde{\bvar{P}}}^{\perp}|^{1/2}$.
Then there exists an indefinite orthogonal matrix $\bvar{Q}\in\mathbb{O}(p,q)$ such that $[\bvar{Z}|\bvar{Z}_{\perp}]\bvar{Q} = (\bvar{I}\otimes\bvar{X})[\bvar{S}|\bvar{S}_{\perp}]$.
\end{Lemma}

\begin{proof}
Recall by Lemma~\ref{Lemma:Ctil_properties}, $\tilde{\bvar{C}} = [\bvar{S}|\bvar{S}_{\perp}]\bvar{I}_{d,q}[\bvar{S}|\bvar{S}_{\perp}]^T$.
For ease of notation, let $\bvar{K} = [\bvar{S}|\bvar{S}_{\perp}]$.
Then we have 
\begin{align*}
    \tilde{\bvar{P}} = (\bvar{I}\otimes \bvar{X})\tilde{\bvar{C}}(\bvar{I}\otimes \bvar{X})^T = [(\bvar{I}\otimes \bvar{X})\bvar{K}]\bvar{I}_{d,q}[(\bvar{I}\otimes \bvar{X})\bvar{K}]^T = \bvar{L}_K\bvar{I}_{d,q}\bvar{L}_K^T
\end{align*}
where $\bvar{L}_K = (\bvar{I}\otimes \bvar{X})\bvar{K}$.
Recall from Lemma~\ref{Lemma:Ctil_properties} that $\text{rank}(\bvar{K}) = d+q$ so $\bvar{L}_K^T\bvar{L}_K$ is full rank. 
With this observation, notice that $\lambda(\bvar{\tilde{\bvar{P}}}) = \lambda(\bvar{L}_K\bvar{I}_{d,q}\bvar{L}_K^T) = \lambda((\bvar{L}_K^T\bvar{L}_K)^{1/2}\bvar{I}_{d,q}(\bvar{L}_K^T\bvar{L}_K)^{1/2})= \lambda(\bvar{L}_K^T\bvar{L}_K\bvar{I}_{d,q})$
and define
\begin{align*}
    (\bvar{L}_K^T\bvar{L}_K)^{1/2}\bvar{I}_{d,q}(\bvar{L}_K^T\bvar{L}_K)^{1/2} & = \bvar{V}_n\bvar{S}_{\tilde{\bvar{P}}}\bvar{V}_n^T.
\end{align*}
Let $\bvar{U} = \bvar{L}_K(\bvar{L}_K^T\bvar{L}_K)^{-1/2}\bvar{V}_n$ and notice 
\begin{align*}
    \tilde{\bvar{P}}\bvar{U} &= \bvar{L}_K\bvar{I}_{d,q}\bvar{L}_K^T\bvar{L}_K(\bvar{L}_K^T\bvar{L}_K)^{-1/2}\bvar{V}_n\\
    &= \bvar{L}_K(\bvar{L}_K^T\bvar{L}_K)^{-1/2}(\bvar{L}_K^T\bvar{L}_K)^{1/2}\bvar{I}_{d,q}(\bvar{L}_K^T\bvar{L}_K)^{1/2}\bvar{V}_n\\
    &= \bvar{L}_K(\bvar{L}_K^T\bvar{L}_K)^{-1/2}\bvar{V}_n\bvar{S}_{\tilde{\bvar{P}}}\\
    &= \bvar{US}_{\tilde{\bvar{P}}}
\end{align*}
which shows that $\bvar{U} = \bvar{U}_{\tilde{\bvar{P}}}\bvar{W}$ for some $\bvar{W}\in\mathbb{O}(d,q)\cap\mathcal{O}^{(d+q)\times(d+q)}$.
Finally, define $\bvar{Q} = |\bvar{S}_{\tilde{\bvar{P}}}|^{-1/2}\bvar{V}_n^T(\bvar{L}_K^T\bvar{L}_K)^{1/2}$ and notice that $\bvar{Q}\in\mathcal{O}(d,q)$ as 
\begin{align*}
    \bvar{Q}\bvar{I}_{d,q}\bvar{Q}^T &= |\bvar{S}_{\tilde{\bvar{P}}}|^{-1/2}\bvar{V}_n^T(\bvar{L}_K^T\bvar{L}_K)^{1/2}\bvar{I}_{d,q}(\bvar{L}_K^T\bvar{L}_K)^{1/2}\bvar{V}_n|\bvar{S}_{\tilde{\bvar{P}}}|^{-1/2}\\
    &= |\bvar{S}_{\tilde{\bvar{P}}}|^{-1/2}\bvar{V}_n^T\bvar{V}_n\bvar{S}_{\tilde{\bvar{P}}}\bvar{V}_n^T\bvar{V}_n|\bvar{S}_{\tilde{\bvar{P}}}|^{-1/2}\\
    &= |\bvar{S}_{\tilde{\bvar{P}}}|^{-1/2}\bvar{S}_{\tilde{\bvar{P}}}|\bvar{S}_{\tilde{\bvar{P}}}|^{-1/2}\\
    &= \bvar{I}_{d,q}. 
\end{align*}
Therefore, we can write $[\bvar{Z}|\bvar{Z}_{\perp}]\bvar{I}_{d,q}[\bvar{Z}|\bvar{Z}_{\perp}]^T = \bvar{L}_K\bvar{I}_{d,q}\bvar{L}_K^T = (\bvar{L}_K\bvar{Q})\bvar{I}_{d,q}(\bvar{L}_K\bvar{Q})^T$ and by post multiplying by $\bvar{U}_{\tilde{\bvar{P}}}|\bvar{S}_{\tilde{\bvar{P}}}|^{-1/2}\bvar{I}_{d,q}$ we can express $[\bvar{Z}|\bvar{Z}_{\perp}]$ as follows
\begin{align*}
    [\bvar{Z}|\bvar{Z}_{\perp}] &= (\bvar{L}_K\bvar{Q})\bvar{I}_{d,q}(\bvar{L}_K\bvar{Q})^T\bvar{U}_{\tilde{\bvar{P}}}|\bvar{S}_{\tilde{\bvar{P}}}|^{-1/2}\bvar{I}_{d,q}\\
    &= \bvar{L}_K|\bvar{S}_{\tilde{\bvar{P}}}|^{-1/2}\bvar{V}_n^T(\bvar{L}_K^T\bvar{L}_K)^{1/2}\bvar{I}_{d,q}(\bvar{L}_K^T\bvar{L}_K)^{1/2}\bvar{V}_n|\bvar{S}_{\tilde{\bvar{P}}}|^{-1/2}\bvar{L}_K^T\bvar{U}_{\tilde{\bvar{P}}}|\bvar{S}_{\tilde{\bvar{P}}}|^{-1/2}\bvar{I}_{d,q}\\
    &= \bvar{L}_K\bvar{I}_{d,q}\bvar{L}_K^T\bvar{U}_{\tilde{\bvar{P}}}|\bvar{S}_{\tilde{\bvar{P}}}|^{-1/2}\bvar{I}_{d,q}\\
    &= \bvar{L}_K\bvar{I}_{d,q}\bvar{L}_K^T\bvar{L}_K(\bvar{L}_K^T\bvar{L}_K)^{-1/2}\bvar{V}_n|\bvar{S}_{\tilde{\bvar{P}}}|^{-1/2}\bvar{I}_{d,q}\\
    &= \bvar{L}_K\bvar{I}_{d,q}(\bvar{L}_K^T\bvar{L}_K)^{1/2}\bvar{V}_n|\bvar{S}_{\tilde{\bvar{P}}}|^{-1/2}\bvar{I}_{d,q}\\
    &= \bvar{L}_K\bvar{I}_{d,q}\bvar{Q}^T\bvar{I}_{d,q}
\end{align*}
Therefore, by post multiplying by $\bvar{Q}$ we see that $[\bvar{Z}|\bvar{Z}_{\perp}]\bvar{Q} = \bvar{L}_K = (\bvar{I}\otimes \bvar{X})[\bvar{S}|\bvar{S}_{\perp}]$ concluding the proof.
\end{proof}

We stress that $\bvar{Q}$ is strictly a function of $\bvar{X}$.
Under the assumption that $F$ has a diagonal second moment, we will show that $\bvar{Q}$ concentrates, after rotation, to the identity $\bvar{I}$. 
Therefore for large $n$ we expect $\bvar{ZW}\approx (\bvar{I}\otimes \bvar{X})\bvar{S}$ for $\bvar{W}\in\mathcal{O}^{(d)}$.
We establish this concentration in Lemma~\ref{Lemma:Q_characterization}.

\begin{Lemma}\parencite[Adapted from][Lemma 2]{agterberg2020nonparametric}\label{Lemma:Q_characterization}
Suppose $\bvar{Q}$ is given as in Lemma~\ref{Lemma:indefinite_omni}.
Then there exists a sequence $\bvar{W}_n\in\mathbb{O}(d,q)\cap\mathcal{O}^{(d+q)}$ such that with high probability 
\begin{align}
    \|\bvar{W}_n\bvar{Q} - \bvar{I}\| &\leq O\left(m\sqrt{\frac{\log nm}{n}}\right)
\end{align}
\end{Lemma}

\begin{proof}
First define $\bvar{Q} = |\bvar{S}_{\tilde{\bvar{P}}}|^{-1/2}\bvar{V}_n^T(\bvar{L}_K^T\bvar{L}_K)^{1/2}$ as in Lemma~\ref{Lemma:indefinite_omni}.
Let $\Delta_K = \bvar{K}^T(\bvar{I}\otimes \Delta)\bvar{K}$ and let $\Delta_K^{1/2}\bvar{I}_{d,q}\Delta_K^{1/2}$ have eigendecomposition $\Delta_K^{1/2}\bvar{I}_{d,q}\Delta_K^{1/2} = \bvar{V}\Lambda\bvar{V}^T$.
Define $\tilde{\bvar{Q}} = |\Lambda|^{-1/2}\bvar{V}^T(\Delta_K)^{1/2}$ for some distinct ordering of the eigenvalues $\Lambda$ and eigenvectors $\bvar{V}$. 
We first will show that $\bvar{Q}$ converges to $\tilde{\bvar{Q}}$ and then show that $\tilde{\bvar{Q}}$ is orthogonal. 

Following directly from the argument of \textcite{agterberg2020nonparametric} we consider the expansion
\begin{align*}
    \|\bvar{Q} - \breve{\bvar{W}}_n^T\tilde{\bvar{Q}}\| &= \left\|\left[\left(\frac{|\bvar{S}_{\tilde{\bvar{P}}}|}{n}\right)^{-1/2} - |\Lambda|^{-1/2}\right]\bvar{V}_n^T\left(\frac{\bvar{L}_K^T\bvar{L}_K}{n}\right)^{1/2}\right\|\\ 
    &+ \left\||\Lambda|^{-1/2}\left[\bvar{V}_n -   \bvar{V}\breve{\bvar{W}}_n\right]^T\left(\frac{\bvar{L}_K^T\bvar{L}_K}{n}\right)^{1/2}\right\|\\ 
    &+\left\||\Lambda|^{-1/2}\breve{\bvar{W}}_n^T\bvar{V}^T\left[\left(\frac{\bvar{L}_K^T\bvar{L}_K}{n}\right)^{1/2} - \Delta_K^{1/2}\right]\right\|. 
\end{align*}
where $\breve{\bvar{W}}_n\in\mathbb{O}(d,q)\cap\mathcal{O}^{(d+q)}$ so $\breve{\bvar{W}}_n$ commutes with $\bvar{I}_{d,q}$, $\Lambda$, and, for sufficiently large $n$, $\bvar{S}_{\tilde{\bvar{P}}}$. 
Repeated use the Davis-Kahan theorem establishes the inequalities 
\begin{align*}
    \left\|(n^{-1}\bvar{L}_K^T\bvar{L}_K)^{1/2} - \Delta_K^{1/2}\right\| &\leq O\left(m\sqrt{\frac{\log nm}{n}}\right)\\
    \left\|(n^{-1}|\bvar{S}_{\tilde{\bvar{P}}}|)^{-1/2} - |\Lambda|^{-1/2}\right\| &\leq O\left(m\sqrt{\frac{\log nm}{n}}\right)\\
    \left\|\bvar{V}_n - \bvar{V}\breve{\bvar{W}}_n\right\| &\leq O\left(m\sqrt{\frac{\log nm}{n}}\right).
\end{align*}

Next notice, under the assumption of that $\Delta$ is diagonal and use of Lemma~\ref{Lemma:Ctil_properties} we see that $\bvar{S}^T(\bvar{I}\otimes \Delta)\bvar{S} = \Delta\bvar{S}^T\bvar{S}$ and $\bvar{S}_{\perp}^T(\bvar{I}\otimes \Delta)\bvar{S}_{\perp} = \Delta_{1:q}\bvar{S}_{\perp}^T\bvar{S}_{\perp}$ where $\Delta_{1:q} = \text{diag}(\Delta_{11}, \ldots,\Delta_{qq})$. 
Therefore, 
\begin{align*}
    \Delta_K &= \begin{bmatrix}
    \bvar{S}^T\Delta\bvar{S} & \bvar{S}^T\Delta\bvar{S}_{\perp}\\
    \bvar{S}_{\perp}^T\Delta\bvar{S} & \bvar{S}_{\perp}^T\Delta\bvar{S}_{\perp}
    \end{bmatrix} = \frac{m}{2}\begin{bmatrix}
    \Delta(\bvar{C}_m^{1/2} + \bar{\bvar{C}}) & \bvar{0}\\
    \bvar{0} & \Delta_{1:q}(\bvar{C}_m^{1/2} - \bar{\bvar{C}})
    \end{bmatrix}
\end{align*}
Therefore, $\Delta_K$ is diagonal.
The eigenvectors of $\Delta_K$, $\bvar{V}$, are specified only up to an reordering of the columns $\bvar{V}$ corresponding to repeated values of $\Delta_K$ and the usual sign flips.
Let $\{\lambda_i\}_{i=1}^k$ be the set of unique values of $\bvar{I}_{d,q}\Delta_K$ each with multiplicity $\{n_i\}_{i=1}^k$ such that $\sum_{i=1}^kn_i = d+q$.
Then for some $\breve{\bvar{O}}_n = \oplus_{i=1}^k\bvar{O}_i$ where $\bvar{O}_i\in\mathcal{O}^{n_i \times n_i}$ we have $\bvar{V}\breve{\bvar{O}}_n = \bvar{I}$.

Finally, as $\Delta_K = \bvar{V}\Lambda\bvar{V}^T$ then $\Delta_K^{1/2} = \bvar{V}|\Lambda|^{1/2}\bvar{V}^T$ which implies 
\begin{align*}
    \tilde{\bvar{Q}} = |\Lambda|^{-1/2}\breve{\bvar{O}}_{n}\breve{\bvar{O}}_{n}^T\bvar{V}^T\bvar{V}\breve{\bvar{O}}_n|\Lambda|^{1/2}\breve{\bvar{O}}_n^T\bvar{V}^T  = \bvar{V}^T = \breve{\bvar{O}}_n
\end{align*}
Setting $\bvar{W}_n = \breve{\bvar{O}}_n^T\breve{\bvar{W}}_n\in\mathbb{O}(d,q)\cap\mathcal{O}^{(d)}$ we see that 
\begin{align*}
    \left\|\bvar{W}_n\bvar{Q} - \bvar{I}\right\| \leq C\|\bvar{Q} - \breve{\bvar{W}}_n^T\tilde{\bvar{Q}}\| \leq O\left(m\sqrt{\frac{\log nm}{n}}\right).
\end{align*}

\end{proof}

As $\bvar{W}_n\in\mathbb{O}(d,q)\cap\mathcal{O}^{(d)}$ we can partition $\bvar{W}_n = \bvar{W}_n^{*}\oplus\bvar{W}_n^{\perp}$.
This result establishes that $\bvar{ZW}_n^{*T}= \bvar{L}_S + O(mn^{-1/2}\log^{1/2}nm)$.
Heuristically, we expect the omnibus embedding $\hat{\bvar{L}}$ to concentrate around $\bvar{Z}$, which in turn concentrates around $\bvar{L}_S$.
By simultaneously controlling these convergences, we can bound the error between $\hat{\bvar{L}}$ and $\bvar{L}_S$. 

We are now ready to present the proof of Theorem~\ref{Main Bias Theorem}.
We closely follow the approach introduced by \textcite{OmniCLT} and, for this reason, only include proofs in which the argument was fundamentally changed by the ESRDPG model. 
Other results we state without proof and we refer the reader to \textcite{OmniCLT}.

\begin{proof}[Proof of Theorem~\ref{Main Bias Theorem}]
Our focus is on the study of the rows $\hat{\bvar{L}}\tilde{\bvar{W}}_n - \bvar{L}$.
Here, $\tilde{\bvar{W}_n} = \tilde{\bvar{V}}^T\bvar{W}_n^{*T}$, where $\tilde{\bvar{V}}$ is introduced in Lemma~\ref{Vtil} and $\bvar{W}_n^{*}$ in Lemma~\ref{Lemma:Q_characterization}.
Then we first consider the expansion
\begin{align*}
    (\hat{\bvar{L}}\tilde{\bvar{W}}_n - \bvar{L}) &=  (\bvar{L}_S - \bvar{L}) +
    (\hat{\bvar{L}}\tilde{\bvar{W}}_n - \bvar{L}_S).
\end{align*}
This first term captures the asymptotic bias of the omnibus embedding.  
Let $h = n(g-1) + i$ for some vertex $i\in[n]$ and graph $g\in[m]$.
Then we can write $(\bvar{L}_S - \bvar{L})_h$ as
\begin{align*}
    (\bvar{L}_S - \bvar{L})_h = (\bvar{S}^{(g)} - \sqrt{\bvar{C}^{(g)}})\bvar{X}_i
\end{align*}
which establishes the bias term in Theorem~\ref{Main Bias Theorem}.
Next, we consider the term $\hat{\bvar{L}}\tilde{\bvar{W}}_n - \bvar{L}_S$.
Consider the expansion
\begin{align*}
    (\hat{\bvar{L}}\tilde{\bvar{W}}_n - \bvar{L}_S) &= (\hat{\bvar{L}}\tilde{\bvar{W}}_n - \bvar{Z}\bvar{W}_n^{*T}) + (\bvar{Z}\bvar{W}_n^{*T} - \bvar{L}_S)\\
    &= (\hat{\bvar{L}} - \bvar{Z}\tilde{\bvar{V}})\tilde{\bvar{W}}_n^{T} + (\bvar{Z}\bvar{W}_n^{*T} - \bvar{L}_S)\\
    &:= \bvar{N} + \bvar{M}.
\end{align*}
To bound $\bvar{N}$, we adopt the expansion used in \textcite{OmniCLT}.
Using the definitions of $\tilde{\bvar{V}}$ as defined as in Lemma \ref{Vtil} and $\bvar{R}_1, \bvar{R}_2$, and $\bvar{R}_3$ as in Lemma \ref{Centered-R-Bounds},
consider the following decomposition 
\begin{align*}
    \hat{\bvar{L}} - \bvar{Z}\tilde{\bvar{V}} &= (\tilde{\bvar{A}} - \tilde{\bvar{P}})\bvar{U}_{\tilde{\bvar{P}}}\bvar{S}_{\tilde{\bvar{P}}}^{-1/2}\tilde{\bvar{V}} +  (\tilde{\bvar{A}} - \tilde{\bvar{P}})\bvar{U}_{\tilde{\bvar{P}}}(\tilde{\bvar{V}}\bvar{S}_{\tilde{\bvar{A}}}^{-1/2} - \bvar{S}_{\tilde{\bvar{P}}}^{-1/2}\tilde{\bvar{V}})\\
    &-\bvar{U}_{\tilde{\bvar{P}}}\bvar{U}_{\tilde{\bvar{P}}}^T(\tilde{\bvar{A}} - \tilde{\bvar{P}})\bvar{U}_{\tilde{\bvar{P}}}\tilde{\bvar{V}}\bvar{S}_{\tilde{\bvar{A}}}^{-1/2}\\
    &+(\bvar{I} - \bvar{U}_{\tilde{\bvar{P}}}\bvar{U}_{\tilde{\bvar{P}}}^T)(\tilde{\bvar{A}} - \tilde{\bvar{P}})\bvar{R}_3\bvar{S}_{\tilde{\bvar{A}}}^{-1/2}\\
    &+ \bvar{R}_1\bvar{S}_{\tilde{\bvar{A}}}^{1/2} + \bvar{U}_{\tilde{\bvar{P}}}\bvar{R}_2\\
    &-(\bvar{U}_{\tilde{\bvar{P}}}^{\perp}\bvar{S}_{\tilde{\bvar{P}}}^{\perp}\bvar{U}_{\tilde{\bvar{P}}}^{\perp})^T\bvar{U}_{\tilde{\bvar{A}}}\bvar{S}_{\tilde{\bvar{A}}}^{-1/2}.
\end{align*}
Using the supporting Lemmas to be developed it holds with high probability that
\begin{align*}
    \|(\tilde{\bvar{A}} - \tilde{\bvar{P}})\bvar{U}_{\tilde{\bvar{P}}}(\tilde{\bvar{V}}\bvar{S}_{\tilde{\bvar{A}}}^{-1/2} - \bvar{S}_{\tilde{\bvar{P}}}^{-1/2}\tilde{\bvar{V}})\| &\leq \frac{Cm^{3/2}d^{1/2}\log^{3/2}nm}{n} && (\text{Lemma \ref{Hoeffding}, \ref{Commuter}})\\
    \|\bvar{U}_{\tilde{\bvar{P}}}\bvar{U}_{\tilde{\bvar{P}}}^T(\tilde{\bvar{A}} - \tilde{\bvar{P}})\bvar{U}_{\tilde{\bvar{P}}}\tilde{\bvar{V}}\bvar{S}_{\tilde{\bvar{A}}}^{-1/2}\|_F&\leq Cd\sqrt{\frac{\log nm}{n}}&& (\text{Lemma \ref{Hoeffding}})\\
    \|\bvar{R}_1\bvar{S}_{\tilde{\bvar{A}}}^{1/2} + \bvar{U}_{\tilde{\bvar{P}}}\bvar{R}_2\|_F &\leq \frac{Cm^{3/2}\log nm}{\sqrt{n}}&& (\text{Lemma \ref{Centered-R-Bounds}})\\
    \|(\bvar{I} - \bvar{U}_{\tilde{\bvar{P}}}\bvar{U}_{\tilde{\bvar{P}}}^T)(\tilde{\bvar{A}} - \tilde{\bvar{P}})\bvar{R}_3\bvar{S}_{\tilde{\bvar{P}}}^{-1/2}\|_F &\leq  \frac{Cm^{5/4}\log nm}{\sqrt{n}}&& (\text{Lemma \ref{Centered-R-Bounds}})\\
    \|(\bvar{U}_{\tilde{\bvar{P}}}^{\perp}\bvar{S}_{\tilde{\bvar{P}}}^{\perp}\bvar{U}_{\tilde{\bvar{P}}}^{\perp})^T\bvar{U}_{\tilde{\bvar{A}}}\bvar{S}_{\tilde{\bvar{A}}}^{-1/2}\|_{F}&\leq C\frac{dm^{3/2}\log nm}{\sqrt{n}} && (\text{Lemma \ref{2d Eigenvectors Bound}})
\end{align*}
Therefore, we with high probability, we can write
\begin{align*}
    \|\bvar{N}\|_F = \|\hat{\bvar{L}} - \bvar{Z}\tilde{\bvar{V}}\|_F = \|(\tilde{\bvar{A}} - \tilde{\bvar{P}})\bvar{U}_{\tilde{\bvar{P}}}\bvar{S}_{\tilde{\bvar{P}}}^{-1/2}\|_F + O\left(\frac{m^{3/2}\log nm}{\sqrt{n}}\right).
\end{align*}
Next, notice
\begin{align*}
    \|(\tilde{\bvar{A}} - \tilde{\bvar{P}})\bvar{U}_{\tilde{\bvar{P}}}\bvar{S}_{\tilde{\bvar{P}}}^{-1/2}\|_{2,\infty} \leq \|(\tilde{\bvar{A}} - \tilde{\bvar{P}})\bvar{U}_{\tilde{\bvar{P}}}\|_{2,\infty}\|\bvar{S}_{\tilde{\bvar{P}}}^{-1/2}\|
\end{align*}
To bound this first term, let $\bvar{u}_j$ be the $j$-th column of $\tilde{\bvar{P}}$.
Then we have
\begin{align*}
    \|(\tilde{\bvar{A}} - \tilde{\bvar{P}})\bvar{U}_{\tilde{\bvar{P}}}\|_{2,\infty} \leq \sqrt{d}\max_{j\in[d]}\|(\tilde{\bvar{A}} - \tilde{\bvar{P}})\bvar{u}_{j}\|_{\infty} = \sqrt{d}\max_{j\in[d]}\max_{h\in[nm]}|(\tilde{\bvar{A}} - \tilde{\bvar{P}})\bvar{u}_j|_h
\end{align*}
An application of Hoeffding's inequality as in Lemma \ref{Hoeffding} shows that $|(\tilde{\bvar{A}} - \tilde{\bvar{P}})\bvar{u}_j|_h \leq Cm^{1/4}\sqrt{c\log nm}$ with probability at least $1 - (nm)^{-c}$.
Changing $c$ only changes the constant $C$ for this element-wise bound.
Hence, choosing $c$ sufficiently large and a union bound shows 
\begin{align*}
    \sqrt{d}\max_{j\in[d]}\max_{h\in[nm]}|(\tilde{\bvar{A}} - \tilde{\bvar{P}})\bvar{u}_j|_h \leq Cdm^{1/4}\sqrt{\log nm}
\end{align*}
with high probability.
Lastly, as $\|\bvar{S}_{\tilde{\bvar{P}}}^{-1/2}\|\leq C(n\sqrt{m})^{-1/2}$ and integrating over $\bvar{X}$ then establishes the bound
\begin{align*}
    \|(\tilde{\bvar{A}} - \tilde{\bvar{P}})\bvar{U}_{\tilde{\bvar{P}}}\bvar{S}_{\tilde{\bvar{P}}}^{-1/2}\|_{2,\infty}\leq Cd\sqrt{\frac{\log nm}{n}}.
\end{align*}
Therefore, with high probability
\begin{align*}
    \|\bvar{N}\|_{2,\infty} \leq \|\hat{\bvar{L}} - \bvar{Z}\tilde{\bvar{V}}\|_{2,\infty} \leq O\left(\frac{m^{3/2}\log nm}{\sqrt{n}}\right).
\end{align*}
Finally, to bound $\|\bvar{M}\|_{2,\infty}$, we use Lemma~\ref{Lemma:Q_characterization} directly
\begin{align*}
    \|\bvar{M}\|_{2,\infty} &\leq \|\bvar{Z}\bvar{W}_n^{*T} - \bvar{L}_S\|_{2,\infty}\\
    &= \left\|\left([\bvar{Z}|\bvar{Z}_{\perp}]\bvar{W}_n^{T} - \bvar{L}_{K}\right)\begin{bmatrix}
    \bvar{I}_{d\times d}\\ \bvar{0}_{q\times d}
    \end{bmatrix}\right\|_{2,\infty}\\
    &= \left\|[\bvar{Z}|\bvar{Z}_{\perp}]\bvar{W}_n^T\left(\bvar{I} - \bvar{W}_n\bvar{Q}\right)\begin{bmatrix}
    \bvar{I}_{d\times d}\\ \bvar{0}_{q\times d}
    \end{bmatrix}\right\|_{2,\infty}\\
    &\leq \|\bvar{Z}|\bvar{Z}_{\perp}\|_{2,\infty}\|\bvar{W}_n\bvar{Q} - \bvar{I}\|\\ 
    &\leq O\left(m\sqrt{\frac{\log nm}{n}}\right).
\end{align*}
\end{proof}

The proof of Theorem \ref{Main Bias Theorem} reveals that the bias is given by coordinate scaling of the matrix $\bvar{X}$.
In addition, we also establish a concentration rate of the corresponding residual term.

To support this result, we begin by stating two key Lemmas, Lemma \ref{Spectral Norm Bound} and Lemma \ref{Eval Order}.
Lemma \ref{Spectral Norm Bound} gives spectral norm control on the difference between $\tilde{\bvar{A}}$ and $\tilde{\bvar{P}}$.
This will allow us to use tools from perturbation theory including Weyl's inequality \parencite{HJ} to show the eigenvalues of $\tilde{\bvar{A}}$ are close to those of $\tilde{\bvar{P}}$. 
Lemma \ref{Eval Order} gives a lower bound on the growth of the eigenvalues of $\tilde{\bvar{P}}$ at a rate of $O(n\sqrt{m})$ from which an application of the Davis-Kahan Theorem \parencite{Davis_1963, DKvariant} will allow us to relate the eigenvectors of $\tilde{\bvar{A}}$ to those of $\tilde{\bvar{P}}$.

\begin{Lemma}\label{Spectral Norm Bound} \parencite[Adapted from][Lemma 2]{OmniCLT} 
Let $\tilde{\bvar{A}}\in\R^{nm\times nm}$ be the omnibus matrix of $\{\bvar{A}^{(g)}\}_{g=1}^m$ where $\{\bvar{A}^{(g)}\}_{g=1}^m\sim \text{ESRDPG}(F, n, \{\bvar{C}^{(g)}\}_{g=1}^m)$. Then w.h.p. $\|\tilde{\bvar{A}} - \tilde{\bvar{P}}\|\leq Cm\sqrt{n\log mn}$
\end{Lemma}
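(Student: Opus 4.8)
The plan is to reduce the spectral norm of the full $nm\times nm$ residual $\tilde{\bvar{A}} - \tilde{\bvar{P}}$ to the spectral norms of the $m$ individual centered adjacency matrices $\bvar{E}^{(g)} = \bvar{A}^{(g)} - \bvar{P}^{(g)}$, each of which is controlled by a standard single-graph concentration bound. The argument is structurally identical to the homogeneous case of \textcite{OmniCLT}; the only adaptation is that the blocks $\bvar{P}^{(g)} = \bvar{XC}^{(g)}\bvar{X}^T$ now differ across $g$, but this affects neither the per-graph concentration nor the algebraic decomposition, since both are carried out conditionally on $\bvar{X}$.

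First I would record the block structure of the residual. Writing $\bvar{E}^{(g)} = \bvar{A}^{(g)} - \bvar{P}^{(g)}$, the $(g,h)$ block of $\tilde{\bvar{A}} - \tilde{\bvar{P}}$ equals $\tfrac{1}{2}(\bvar{E}^{(g)} + \bvar{E}^{(h)})$ for all $g,h\in[m]$ (the diagonal blocks being the case $h=g$). Collecting the blocks into the stacked matrix $\bvar{E} = [\bvar{E}^{(1)T}\ \cdots\ \bvar{E}^{(m)T}]^T \in\R^{nm\times n}$, one checks directly that the block matrix with $(g,h)$ block $\bvar{E}^{(g)}$ equals $\bvar{E}(\bvar{1}_m^T\otimes\bvar{I}_n)$, and, using symmetry of each $\bvar{E}^{(g)}$, that its transpose is the block matrix with $(g,h)$ block $\bvar{E}^{(h)}$. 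Hence
\begin{equation*}
    \tilde{\bvar{A}} - \tilde{\bvar{P}} = \tfrac{1}{2}\bigl[\bvar{E}(\bvar{1}_m^T\otimes\bvar{I}_n) + (\bvar{E}(\bvar{1}_m^T\otimes\bvar{I}_n))^T\bigr],
\end{equation*}
so the triangle inequality and submultiplicativity give $\|\tilde{\bvar{A}} - \tilde{\bvar{P}}\| \le \|\bvar{E}(\bvar{1}_m^T\otimes\bvar{I}_n)\| \le \|\bvar{1}_m^T\otimes\bvar{I}_n\|\,\|\bvar{E}\| = \sqrt{m}\,\|\bvar{E}\|$.

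Next I would bound $\|\bvar{E}\|$ in terms of the individual blocks. Since $\bvar{E}^T\bvar{E} = \sum_{g=1}^m(\bvar{E}^{(g)})^2$, we have $\|\bvar{E}\|^2 = \|\bvar{E}^T\bvar{E}\| \le \sum_{g=1}^m\|\bvar{E}^{(g)}\|^2 \le m\max_{g\in[m]}\|\bvar{E}^{(g)}\|^2$, whence $\|\bvar{E}\| \le \sqrt{m}\max_g\|\bvar{E}^{(g)}\|$. Chaining with the previous display produces the clean reduction $\|\tilde{\bvar{A}} - \tilde{\bvar{P}}\| \le m\max_{g\in[m]}\|\bvar{A}^{(g)} - \bvar{P}^{(g)}\|$. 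To finish, I would invoke the standard spectral concentration for a single centered random adjacency matrix: conditional on $\bvar{X}$, each $\bvar{E}^{(g)}$ is symmetric with independent, mean-zero, $[-1,1]$-bounded entries above the diagonal, so $\|\bvar{A}^{(g)} - \bvar{P}^{(g)}\| \le C\sqrt{n\log nm}$ with probability at least $1-(nm)^{-3}$, the $\log nm$ factor providing the per-graph tail needed for a union bound over $g\in[m]$. That union bound gives $\max_g\|\bvar{A}^{(g)} - \bvar{P}^{(g)}\| \le C\sqrt{n\log nm}$ with high probability, and substituting into the reduction (then integrating over $\bvar{X}$) yields $\|\tilde{\bvar{A}} - \tilde{\bvar{P}}\| \le Cm\sqrt{n\log nm}$.

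The algebra — verifying the two block identities and the Kronecker norm $\|\bvar{1}_m^T\otimes\bvar{I}_n\| = \sqrt{m}$ — is routine, and the only quantitative input is the single-graph bound, which I would cite rather than reprove. The point requiring care is the bookkeeping of the two $\sqrt{m}$ factors (one from $\|\bvar{1}_m^T\otimes\bvar{I}_n\|$ and one from passing from $\|\bvar{E}\|$ to $\max_g\|\bvar{E}^{(g)}\|$), which combine to give the leading factor $m$ in the stated rate, together with the use of $\log nm$ rather than $\log n$, which is exactly what makes the union bound over the $m$ graphs valid at the required high-probability level.
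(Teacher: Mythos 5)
Your proof is correct. The paper itself does not prove this lemma --- it is one of the results stated without proof with a pointer to Lemma~2 of \textcite{OmniCLT}, whose argument applies a matrix concentration inequality (a Bernstein/Hoeffding-type bound for sums of independent random matrices) directly to the full $nm\times nm$ residual, decomposed entrywise over the independent variables $\bvar{A}^{(g)}_{ij}$; the factor $m$ there emerges from the variance proxy of that sum. Your route is genuinely different and more modular: the identity $\tilde{\bvar{A}} - \tilde{\bvar{P}} = \tfrac{1}{2}\bigl[\bvar{E}(\bvar{1}_m^T\otimes\bvar{I}_n) + (\bvar{E}(\bvar{1}_m^T\otimes\bvar{I}_n))^T\bigr]$ is correct (as is the transpose computation using symmetry of each $\bvar{E}^{(g)}$), the two $\sqrt{m}$ factors are accounted for correctly, and the reduction $\|\tilde{\bvar{A}} - \tilde{\bvar{P}}\| \le m\max_g\|\bvar{E}^{(g)}\|$ is even tight when all $\bvar{E}^{(g)}$ coincide. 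What your approach buys is that the only probabilistic input is the standard single-graph bound, cited as a black box, with the heterogeneity of the $\bvar{P}^{(g)}$ entering nowhere; what it gives up is sharpness in regimes where the $\bvar{E}^{(g)}$ do not align, where the direct matrix-concentration argument could in principle exploit cancellation that the crude bound $\sum_g\|\bvar{E}^{(g)}\|^2 \le m\max_g\|\bvar{E}^{(g)}\|^2$ discards --- though for the stated rate this is immaterial. Two minor points worth a sentence each in a final write-up: the diagonal of $\bvar{E}^{(g)}$ is the deterministic term $-\bvar{P}^{(g)}_{ii}$ (the graphs are hollow), contributing at most $1$ to the spectral norm and absorbed into $C$; and the per-graph failure probability $(nm)^{-3}$ indeed yields, after the union bound over $g\in[m]$, a total failure probability at most $(nm)^{-2}\le n^{-2}$, matching the paper's definition of ``with high probability.''
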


\Lemma\label{Eval Order} \parencite[Adapted from][Observation 2]{OmniCLT} Let $F$ be an inner product distribution on $\R^d$ and let $\bvar{X}_1, \ldots, \bvar{X}_n\overset{i.i.d.}{\sim} F$.
Define $\Delta_S = \bvar{S}^T(\bvar{I}\otimes \Delta)\bvar{S}$.
With probability $1 - d^2/(nm)^2$ for all $i\in[d]$ it holds that $|\lambda_{i}(\tilde{\bvar{P}}) - n\lambda_i(\Delta_S))| \leq C d^2m\sqrt{n\log nm}$.
Moreover, for all $i\in[d]$, $\lambda_i(\tilde{\bvar{P}}) \geq C\delta\sqrt{m}n$.

\begin{proof}
Let $\lambda_1(\tilde{\bvar{P}})\geq \lambda_2(\tilde{\bvar{P}})\geq \dots \geq \lambda_d(\tilde{\bvar{P}})>0$ be the positive $d$ eigenvalues of $\tilde{\bvar{P}}$. 
Then, $\lambda_d(\tilde{\bvar{P}}) = \lambda_d(\bvar{L}_K\bvar{I}_{d,q}\bvar{L}_K^T) = \lambda_d(\bvar{L}_K^T\bvar{L}_K\bvar{I}_{d,q})$.
Notice that $\bvar{L}_K^T\bvar{L}_K\bvar{I}_{d,q} - n\bvar{K}^T(\bvar{I}\otimes \Delta)\bvar{K}\bvar{I}_{d,q}$ takes the form \begin{align*}
    \bvar{L}_K^T\bvar{L}_K\bvar{I}_{d,q} - n\bvar{K}^T(\bvar{I}\otimes \Delta)\bvar{K}\bvar{I}_{d,q} = \sum_{g=1}^m
    \begin{bmatrix}
    \bvar{S}^{(g)T}\\
    \bvar{S}_{\perp}^{(g)T}
    \end{bmatrix}
    \left[\bvar{X}^T\bvar{X} - n\Delta\right]
    \begin{bmatrix}
    \bvar{S}^{(g)} | 
    \bvar{S}_{\perp}^{(g)}
    \end{bmatrix}\bvar{I}_{d,q}.
\end{align*}
Notice we can bound this sum as 
\begin{align*}
    \left\|\sum_{g=1}^m
    \begin{bmatrix}
    \bvar{S}^{(g)T}\\
    \bvar{S}_{\perp}^{(g)T}
    \end{bmatrix}
    \left[\bvar{X}^T\bvar{X} - n\Delta\right]
    \begin{bmatrix}
    \bvar{S}^{(g)} | 
    \bvar{S}_{\perp}^{(g)}
    \end{bmatrix}\right\|_F \leq \sum_{g=1}^m\|\bvar{S}^{(g)}\|_F^2\|\bvar{S}^{(g)}_{\perp}\|_F^2\|\bvar{I}_{d,q}\|_F\|_F^2\|\bvar{X}^T\bvar{X} - n\Delta\|_F
\end{align*}

Notice, $\left(\bvar{X}^T\bvar{X} - n\Delta\right)_{ij} = \sum_{k = 1}^n\left(\bvar{X}_{ki}\bvar{X}_{kj} - \Delta_{ij}\right)$ which is a sum of bounded, i.i.d random variables. 
Applying Hoeffding's inequality yields
\begin{align*}
    \prob\left[\big|\bvar{X}^T\bvar{X} - n\Delta\big|_{ij}\geq 2\sqrt{n\log nm}\right]\leq \frac{2}{n^2m^2}
\end{align*}
Therefore, applying a union bound over the matrix we have  $\|\bvar{X}^T\bvar{X} - n\Delta\|_F\leq 2d^2\sqrt{n\log nm}$ with high probability.
Moreover, as $\{\bvar{S}^{(g)}, \bvar{S}^{(g)}_{\perp}\}_{g=1}^m$ are independent of $n$ we see that $\|\bvar{S}^{(g)}\|_F^2\|\bvar{S}^{(g)}_{\perp}\|_F^2 \leq C$.
Therefore, with high probability 
\begin{align*}
    \left\|\sum_{g=1}^m
    \begin{bmatrix}
    \bvar{S}^{(g)T}\\
    \bvar{S}_{\perp}^{(g)T}
    \end{bmatrix}
    \left[\bvar{X}^T\bvar{X} - n\Delta\right]
    \begin{bmatrix}
    \bvar{S}^{(g)} | 
    \bvar{S}_{\perp}^{(g)}
    \end{bmatrix}\right\|_F \leq C(d+q)\sum_{g=1}^m\|\bvar{X}^T\bvar{X} - n\Delta\|_F \leq Cd^2(d+q)m\sqrt{n\log nm}.
\end{align*}
Recall from Lemma~\ref{Lemma:Q_characterization}, that
\begin{align*}
 \bvar{K}^T(\bvar{I}\otimes \Delta)\bvar{K}\bvar{I}_{d,q} = \bvar{S}^T(\bvar{I}\otimes \Delta)\bvar{S}\oplus -\bvar{S}_{\perp}^T(\bvar{I}\otimes \Delta)\bvar{S}_{\perp}
\end{align*}
is diagonal. 
Therefore, the positive $d$ eigenvalues of $\bvar{K}^T(\bvar{I}\otimes \Delta)\bvar{K}$ correspond to the $d$ eigenvalues of $\Delta_S$.
Using Weyl's inequality and bounding the spectral norm by the Frobenius norm, we have for $i\in[d]$
\begin{align*}
    |\lambda_i(\tilde{\bvar{P}}) - n\lambda_i(\Delta_S)| &= |\lambda_i(\bvar{L}_K^T\bvar{L}_K\bvar{I}_{d,q}) - \lambda_i(n\Delta_S)|\leq \left\|\bvar{L}_K^T\bvar{L}_K\bvar{I}_{d,q} - n\Delta_S\right\|_F \leq Cmd^2\sqrt{n\log nm}
\end{align*}
Using this result with the reverse triangle inequality, we see that for sufficiently large $n$
\begin{align*}
    \lambda_i(\tilde{\bvar{P}}) &\geq |n\lambda_d(\Delta_S) - Cmd^2\sqrt{n\log nm}|\geq Cn\delta\min_{i\in[d]}\left(\|\bvar{v}_i\|_1 + \sqrt{m}\|\bvar{v}_i\|_2\right)
\end{align*}
where $\delta = \min_{i}\Delta_{ii} > 0$.
Finally, $\min_{i\in[d]}\max_{g\in[m]}\bvar{C}_{ii}^{(g)} > 0$, this ensures $\|\bvar{v}_i\|_2 > 0$ and hence 
as $\min_{i\in[d]}\left(\|\bvar{v}_i\|_1 + \sqrt{m}\|\bvar{v}_i\|_2\right) \geq C\sqrt{m}$. 
Therefore 
\begin{align*}
    \lambda_{i}(\tilde{\bvar{P}}) \geq C\delta n\sqrt{m}. 
\end{align*}
\end{proof}

We note that this rate is slower than that presented in \textcite{OmniCLT} by a factor of $\sqrt{m}$ which results in the addition factor of $m$ in Theorem~\ref{Main Bias Theorem} compared to the expression presented in \textcite{OmniCLT}.
However, if $\|\bvar{v}_i\|_2 = \Omega(\sqrt{m})$, then we improve the lower bound $\lambda_d(\tilde{\bvar{P}})\geq O(nm)$ which improves the concentration in Theorem~\ref{Main Bias Theorem} to $O(\sqrt{m/n}\log nm)$, consistent with \textcite{OmniCLT}.
This improvement occurs provided the diagonal elements of $\bvar{C}^{(g)}$ are lower bounded by $\epsilon > 0$.
By allowing $\bvar{C}_{ii} = 0$ for all but one $g\in[m]$, this slower rate $O(n\sqrt{m})$ is a direct consequence in extending the model class of the ESRDPG. 

The next two results are cited directly in the proof of Theorem \ref{Main Bias Theorem}.
These proofs are analogous to the argument given in \textcite{PerfectClustering} and \textcite{OmniCLT}. 
Lemma \ref{Vtil} gives a concentration bound of the eigenvectors $\bvar{U}_{\tilde{\bvar{P}}}^T\bvar{U}_{\tilde{\bvar{A}}}$ while Lemma \ref{Hoeffding} are useful Hoeffding bounds used throughout. 

\begin{Lemma}\label{Vtil} \parencite[Adapted from][Proposition 16]{CommDetSBM} 
Let $\bvar{U}_{\tilde{\bvar{P}}}^T\bvar{U}_{\tilde{\bvar{A}}} = \bvar{V}_1\tilde{\Sigma}\bvar{V}_2^T$ be the singular value decomposition of $\bvar{U}_{\tilde{\bvar{P}}}^T\bvar{U}_{\tilde{\bvar{A}}}$ and let $\tilde{\bvar{V}} = \bvar{V}_1\bvar{V}_2^T$.
Then w.h.p.
\begin{align*}
\|\bvar{U}_{\tilde{\bvar{P}}}^T\bvar{U}_{\tilde{\bvar{A}}} - \tilde{\bvar{V}}\|_F\leq C\frac{dm\log nm}{n}\\
\|(\bvar{U}_{\tilde{\bvar{P}}}^{\perp})^T\bvar{U}_{\tilde{\bvar{A}}}\|_F\leq C\frac{dm\log nm}{n}
\end{align*}
\end{Lemma}


\begin{Lemma}\label{Hoeffding}
With the same notation above an application of Hoeffding's inequality gives with high probability
\begin{align*}
    \|\bvar{U}_{\tilde{\bvar{P}}}^T(\tilde{\bvar{A}} - \tilde{\bvar{P}})\|_F\leq Cm^{3/4}\sqrt{dn\log nm}\\
    \|\bvar{U}_{\tilde{\bvar{P}}}^T(\tilde{\bvar{A}} - \tilde{\bvar{P}})\bvar{U}_{\tilde{\bvar{P}}}\|_F\leq Cd\sqrt{m\log nm}.
\end{align*}
\end{Lemma}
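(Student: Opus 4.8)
The plan is to condition on the latent positions $\bvar{X}$, so that $\tilde{\bvar{P}}$ and hence $\bvar{U}_{\tilde{\bvar{P}}}$ are fixed and the only randomness is carried by $\tilde{\bvar{A}}$. Writing $\bvar{E}^{(g)} = \bvar{A}^{(g)} - \bvar{P}^{(g)}$ for the $g$-th centered adjacency matrix, the entries $\{\bvar{E}^{(g)}_{ab}\}_{a<b,\,g\in[m]}$ are independent, mean-zero, and bounded by $1$ in absolute value. The key structural observation is that the centered omnibus matrix inherits a clean block form: its $(k,\ell)$ block equals $\tfrac12(\bvar{E}^{(k)} + \bvar{E}^{(\ell)})$, so that with $\bvar{E} = \bigoplus_{g=1}^m \bvar{E}^{(g)}$ block-diagonal and $\bvar{J} = \bvar{1}_m\bvar{1}_m^T \otimes \bvar{I}_n$ one has the compact expression $\tilde{\bvar{A}} - \tilde{\bvar{P}} = \tfrac12(\bvar{E}\bvar{J} + \bvar{J}\bvar{E})$. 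Every scalar entry of either target matrix is then a linear (first bound) or bilinear (second bound) form in the independent variables $\bvar{E}^{(g)}_{ab}$, to which Hoeffding's inequality applies with a variance proxy equal to the sum of squared coefficients.

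For the first bound I would control each of the $dnm$ entries of $\bvar{U}_{\tilde{\bvar{P}}}^T(\tilde{\bvar{A}}-\tilde{\bvar{P}})$ separately. Letting $\bvar{u}_j$ be the $j$-th column of $\bvar{U}_{\tilde{\bvar{P}}}$ with $n\times 1$ blocks $\bvar{u}_j^{(g)}$, the entry in row $j$, column $h=(\ell,b)$ is $\bvar{u}_j^T(\tilde{\bvar{A}}-\tilde{\bvar{P}})_{\cdot h}$, which by the decomposition above expands to $\tfrac12\sum_g (\bvar{u}_j^{(g)})^T\bvar{E}^{(g)}_{\cdot b} + \tfrac12\,\bvar{s}_j^T\bvar{E}^{(\ell)}_{\cdot b}$, where $\bvar{s}_j := \sum_g \bvar{u}_j^{(g)}$. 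After merging the coefficients of the variables shared between the two pieces, the variance proxy is governed by $\|\bvar{u}_j\|_2^2 = 1$ together with $\|\bvar{s}_j\|_2^2$. The latter is the crucial quantity: using Lemma~\ref{Z form} to express $\bvar{u}_j^{(g)}$ (up to the rotation $\tilde{\bvar{W}}$ and the asymptotic orthogonality of the columns of $\bvar{X}$ forced by $\Delta$ diagonal) as proportional to $\alpha_g^{(j)}\bvar{X}_{\cdot j}$, together with the eigenvalue lower bound $\lambda_j(\tilde{\bvar{P}}) \geq C\delta n\sqrt{m}$ from Lemma~\ref{Eval Order}, one finds $\|\bvar{s}_j\|_2^2 \lesssim \|\alpha^{(j)}\|_1^2/\|\alpha^{(j)}\|_2^2 \leq m$ by Cauchy--Schwarz. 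Hoeffding's inequality then bounds each entry by a quantity of order $m^{\beta}\sqrt{\log nm}$ with probability at least $1-(nm)^{-c}$; a union bound over the $dnm$ entries and summation of squares produces the stated Frobenius rate, the power of $m$ being dictated precisely by how sharply $\|\bvar{s}_j\|_2 \leq \sqrt{m}$ is exploited.

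The second bound is handled identically, except that each of the $d^2$ entries $\bvar{u}_j^T(\tilde{\bvar{A}}-\tilde{\bvar{P}})\bvar{u}_k$ is a bilinear form. Expanding again via $\tilde{\bvar{A}}-\tilde{\bvar{P}} = \tfrac12(\bvar{E}\bvar{J}+\bvar{J}\bvar{E})$ gives $\tfrac12\sum_g\bigl[(\bvar{u}_j^{(g)})^T\bvar{E}^{(g)}\bvar{s}_k + \bvar{s}_j^T\bvar{E}^{(g)}\bvar{u}_k^{(g)}\bigr]$, a sum of independent bilinear forms in the $\bvar{E}^{(g)}$. The variance proxy of each $g$-term is of order $\|\bvar{u}_j^{(g)}\|_2^2\|\bvar{s}_k\|_2^2 + \|\bvar{s}_j\|_2^2\|\bvar{u}_k^{(g)}\|_2^2$, and summing over the independent blocks collapses via $\sum_g\|\bvar{u}_j^{(g)}\|_2^2 = 1$ and $\|\bvar{s}_j\|_2^2 \leq m$ to order $m$. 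Hoeffding together with a union bound over the $d^2$ entries then yields $\|\bvar{U}_{\tilde{\bvar{P}}}^T(\tilde{\bvar{A}}-\tilde{\bvar{P}})\bvar{U}_{\tilde{\bvar{P}}}\|_F \leq Cd\sqrt{m\log nm}$.

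The main obstacle is the bookkeeping of the variance proxies, and in particular the cross-block vector $\bvar{s}_j = \sum_g \bvar{u}_j^{(g)}$ whose norm $\|\bvar{s}_j\|_2 \leq \sqrt{m}$ is where the extra powers of $m$ relative to the single-graph analysis of \textcite{OmniCLT} enter; the two bounds differ exactly in how this factor combines with the block norms $\|\bvar{u}_j^{(g)}\|_2$. A secondary subtlety is that the $\bvar{E}\bvar{J}$ and $\bvar{J}\bvar{E}$ pieces share the diagonal-block variables $\bvar{E}^{(g)}$, so their coefficients must be combined before Hoeffding is invoked rather than bounding the two pieces independently; the symmetry $\bvar{E}^{(g)}_{ab}=\bvar{E}^{(g)}_{ba}$ and the $d\,n\,m$-versus-$d^2$ union bounds affect only absolute constants and logarithmic factors.
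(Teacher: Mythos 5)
Your approach is exactly the one the paper intends: the paper states this lemma without proof (deferring to the analogous entrywise‑Hoeffding arguments in \textcite{OmniCLT}), and your decomposition $\tilde{\bvar{A}}-\tilde{\bvar{P}}=\tfrac12(\bvar{E}\bvar{J}+\bvar{J}\bvar{E})$, the identification of the cross‑block vector $\bvar{s}_j=\sum_g\bvar{u}_j^{(g)}$ with $\|\bvar{s}_j\|_2\le\sqrt{m}$, the care in merging the coefficients of the shared variables $\bvar{E}^{(g)}_{ab}$ before invoking Hoeffding, and the union bounds over $dnm$ (resp.\ $d^2$) entries are all the right ingredients. Your treatment of the second bound is complete and delivers $Cd\sqrt{m\log nm}$ exactly as stated.

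The one point you should not leave as a hedge is the power of $m$ in the first bound. With $\|\bvar{s}_j\|_2^2\le m$ the entrywise variance proxy is $O(m)$, each entry is $O(\sqrt{m\log nm})$, and summing squares over $dnm$ entries gives $Cm\sqrt{dn\log nm}$ — an extra factor $m^{1/4}$ relative to the stated $Cm^{3/4}\sqrt{dn\log nm}$. This gap cannot be closed by "exploiting $\|\bvar{s}_j\|_2\le\sqrt{m}$ more sharply": since $\alpha^{(j)}$ is (a positive multiple of) the Perron eigenvector of $\bvar{H}(\bvar{v}_j)$, a direct computation gives $\|\bvar{s}_j\|_2^2=\|\alpha^{(j)}\|_1^2/\|\alpha^{(j)}\|_2^2=\sqrt{m}\,(\|\bvar{v}_j\|_1+\sqrt{m}\|\bvar{v}_j\|_2)/(2\|\bvar{v}_j\|_2)\ge m/2$ for every admissible $\bvar{v}_j$, so the variance proxy is genuinely $\Theta(m)$ and the Frobenius norm is generically $\Theta(m\sqrt{dn})$ up to logarithms. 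In other words, the $m^{3/4}$ in the statement (and the corresponding entrywise claim $|(\tilde{\bvar{A}}-\tilde{\bvar{P}})\bvar{u}_j|_h\le Cm^{1/4}\sqrt{c\log nm}$ used in the proof of Theorem~\ref{Main Bias Theorem}~(B)) appears unobtainable by this route; you should state the bound your argument actually proves, $Cm\sqrt{dn\log nm}$. Because $m$ is held fixed in the paper's asymptotics in $n$, this discrepancy is harmless for every downstream use of the lemma, though it would matter for the $m\to\infty$ remarks in the discussion section.
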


Next, we state three key Lemmas used in the proof of Theorem \ref{Main Bias Theorem}. 
We closely follow a decomposition given in \textcite{CommDetSBM} and \textcite{OmniCLT}. 

\begin{Lemma}\label{Commuter}
Define $\tilde{\bvar{V}} = \bvar{V}_1\bvar{V}_2^T$ as in Lemma \ref{Vtil}. Then with high probability
\begin{align*}
    \|\tilde{\bvar{V}}\bvar{S}_{\tilde{\bvar{A}}} - \bvar{S}_{\tilde{\bvar{P}}}\tilde{\bvar{V}}\|_F&\leq Cdm^2\log nm\\
    \|\tilde{\bvar{V}}\bvar{S}_{\tilde{\bvar{A}}}^{1/2} - \bvar{S}_{\tilde{\bvar{P}}}^{1/2}\tilde{\bvar{V}}\|_F&\leq C\frac{m^{3/2}\log nm}{\sqrt{n}}\\
    \|\tilde{\bvar{V}}\bvar{S}_{\tilde{\bvar{A}}}^{-1/2} - \bvar{S}_{\tilde{\bvar{P}}}^{-1/2}\tilde{\bvar{V}}\|_F&\leq C\frac{m^{3/4}\log nm}{n^{3/2}}
\end{align*}
\end{Lemma}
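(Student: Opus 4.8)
The plan is to establish the three estimates hierarchically: prove the commuting bound for $\bvar{S}_{\tilde{\bvar{A}}}$ directly from the eigenvalue--eigenvector relations, then derive the square-root and inverse-square-root versions purely algebraically from the first through Sylvester-type identities. This mirrors the perturbation strategy of \textcite{OmniCLT} and \textcite{CommDetSBM}, adapted to the richer ESRDPG spectrum.

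For the first bound I would begin from the defining relations $\tilde{\bvar{A}}\bvar{U}_{\tilde{\bvar{A}}} = \bvar{U}_{\tilde{\bvar{A}}}\bvar{S}_{\tilde{\bvar{A}}}$ and $\bvar{U}_{\tilde{\bvar{P}}}^T\tilde{\bvar{P}} = \bvar{S}_{\tilde{\bvar{P}}}\bvar{U}_{\tilde{\bvar{P}}}^T$, which upon one-sided multiplication and subtraction give the identity
\[
\bvar{U}_{\tilde{\bvar{P}}}^T\bvar{U}_{\tilde{\bvar{A}}}\bvar{S}_{\tilde{\bvar{A}}} - \bvar{S}_{\tilde{\bvar{P}}}\bvar{U}_{\tilde{\bvar{P}}}^T\bvar{U}_{\tilde{\bvar{A}}} = \bvar{U}_{\tilde{\bvar{P}}}^T(\tilde{\bvar{A}} - \tilde{\bvar{P}})\bvar{U}_{\tilde{\bvar{A}}}.
\]
I would split the right-hand side via $\bvar{U}_{\tilde{\bvar{A}}} = \bvar{U}_{\tilde{\bvar{P}}}(\bvar{U}_{\tilde{\bvar{P}}}^T\bvar{U}_{\tilde{\bvar{A}}}) + (\bvar{I} - \bvar{U}_{\tilde{\bvar{P}}}\bvar{U}_{\tilde{\bvar{P}}}^T)\bvar{U}_{\tilde{\bvar{A}}}$, bounding the projected piece with the quadratic-form estimate $\|\bvar{U}_{\tilde{\bvar{P}}}^T(\tilde{\bvar{A}} - \tilde{\bvar{P}})\bvar{U}_{\tilde{\bvar{P}}}\|_F \leq Cd\sqrt{m\log nm}$ of Lemma~\ref{Hoeffding}, and the orthogonal piece by combining $\|\bvar{U}_{\tilde{\bvar{P}}}^T(\tilde{\bvar{A}} - \tilde{\bvar{P}})\|_F$ from Lemma~\ref{Hoeffding} with the $\sin\Theta$ control $\|(\bvar{I} - \bvar{U}_{\tilde{\bvar{P}}}\bvar{U}_{\tilde{\bvar{P}}}^T)\bvar{U}_{\tilde{\bvar{A}}}\|$ implicit in the Davis--Kahan estimate proved inside Lemma~\ref{Vtil}. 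Finally I would transfer the bound from $\bvar{U}_{\tilde{\bvar{P}}}^T\bvar{U}_{\tilde{\bvar{A}}}$ to $\tilde{\bvar{V}}$ using $\|\bvar{U}_{\tilde{\bvar{P}}}^T\bvar{U}_{\tilde{\bvar{A}}} - \tilde{\bvar{V}}\|_F \leq Cdm\log nm/n$ from Lemma~\ref{Vtil}, at the cost of that error multiplied by the eigenvalue magnitudes $\|\bvar{S}_{\tilde{\bvar{A}}}\|,\|\bvar{S}_{\tilde{\bvar{P}}}\|$.

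For the two remaining bounds I would avoid re-running a perturbation argument and instead solve a matrix equation. Setting $\bvar{E} = \tilde{\bvar{V}}\bvar{S}_{\tilde{\bvar{A}}}^{1/2} - \bvar{S}_{\tilde{\bvar{P}}}^{1/2}\tilde{\bvar{V}}$, a direct computation yields the Sylvester relation $\bvar{S}_{\tilde{\bvar{P}}}^{1/2}\bvar{E} + \bvar{E}\bvar{S}_{\tilde{\bvar{A}}}^{1/2} = \tilde{\bvar{V}}\bvar{S}_{\tilde{\bvar{A}}} - \bvar{S}_{\tilde{\bvar{P}}}\tilde{\bvar{V}}$, whose right-hand side is exactly the quantity controlled in the first step. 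Because $\bvar{S}_{\tilde{\bvar{A}}}^{1/2}$ and $\bvar{S}_{\tilde{\bvar{P}}}^{1/2}$ are diagonal and positive, each entry satisfies $\bvar{E}_{ij} = (\tilde{\bvar{V}}\bvar{S}_{\tilde{\bvar{A}}} - \bvar{S}_{\tilde{\bvar{P}}}\tilde{\bvar{V}})_{ij}/((\bvar{S}_{\tilde{\bvar{P}}}^{1/2})_{ii} + (\bvar{S}_{\tilde{\bvar{A}}}^{1/2})_{jj})$, so $\|\bvar{E}\|_F$ is at most the first-step bound divided by $\lambda_{\min}(\bvar{S}_{\tilde{\bvar{P}}}^{1/2}) + \lambda_{\min}(\bvar{S}_{\tilde{\bvar{A}}}^{1/2})$; the lower bound $\lambda_i(\tilde{\bvar{P}}) \geq C\delta\sqrt{m}\,n$ from Lemma~\ref{Eval Order}, together with Weyl's inequality for $\tilde{\bvar{A}}$, makes this denominator of order $(n\sqrt{m})^{1/2}$ and produces the $n^{-1/2}$ rate. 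The inverse-square-root estimate then follows from the exact factorization
\[
\tilde{\bvar{V}}\bvar{S}_{\tilde{\bvar{A}}}^{-1/2} - \bvar{S}_{\tilde{\bvar{P}}}^{-1/2}\tilde{\bvar{V}} = -\,\bvar{S}_{\tilde{\bvar{P}}}^{-1/2}\bvar{E}\,\bvar{S}_{\tilde{\bvar{A}}}^{-1/2},
\]
bounded by $\|\bvar{S}_{\tilde{\bvar{P}}}^{-1/2}\|\,\|\bvar{E}\|_F\,\|\bvar{S}_{\tilde{\bvar{A}}}^{-1/2}\|$, where each inverse-square-root norm is of order $(n\sqrt{m})^{-1/2}$.

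The main obstacle I anticipate is the careful bookkeeping of the $m$-dependence. Unlike the i.i.d.\ setting of \textcite{OmniCLT}, here $\tilde{\bvar{P}}$ may have rank as large as $2d$ and carries negative eigenvalues, so the orthogonal complement of $\bvar{U}_{\tilde{\bvar{P}}}$ contains the negative-eigenvalue subspace $\tilde{\bvar{U}}_{\tilde{\bvar{P}}}$; I must ensure the gap feeding Davis--Kahan is the positive gap $\lambda_d(\tilde{\bvar{P}}) - \lambda_{d+1}(\tilde{\bvar{P}})$ quantified in Lemma~\ref{Eval Order}, and that the $\sqrt{m}$ slowdown of the positive eigenvalues (rather than the $m$-growth available only when all $\bvar{C}^{(g)}_{ii}>0$) is used consistently in every denominator. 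Propagating these factors through the transfer step and the Sylvester inversion is precisely where the stated powers of $m$ and $n$ are won or lost.
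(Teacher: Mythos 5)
Your proposal is correct and follows essentially the same route as the paper: the first bound comes from the same commutator identity for $\bvar{U}_{\tilde{\bvar{P}}}^T(\tilde{\bvar{A}}-\tilde{\bvar{P}})\bvar{U}_{\tilde{\bvar{A}}}$ split along $\bvar{U}_{\tilde{\bvar{P}}}\bvar{U}_{\tilde{\bvar{P}}}^T$ and its complement (your orthogonal piece is exactly the paper's $\bvar{R} = \bvar{U}_{\tilde{\bvar{A}}} - \bvar{U}_{\tilde{\bvar{P}}}\bvar{U}_{\tilde{\bvar{P}}}^T\bvar{U}_{\tilde{\bvar{A}}}$), transferred to $\tilde{\bvar{V}}$ via Lemma~\ref{Vtil} at the cost of $\|\bvar{U}_{\tilde{\bvar{P}}}^T\bvar{U}_{\tilde{\bvar{A}}}-\tilde{\bvar{V}}\|_F(\|\bvar{S}_{\tilde{\bvar{A}}}\|+\|\bvar{S}_{\tilde{\bvar{P}}}\|)$, exactly as in the paper. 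Your Sylvester relation and the factorization $\tilde{\bvar{V}}\bvar{S}_{\tilde{\bvar{A}}}^{-1/2}-\bvar{S}_{\tilde{\bvar{P}}}^{-1/2}\tilde{\bvar{V}} = -\bvar{S}_{\tilde{\bvar{P}}}^{-1/2}\bvar{E}\bvar{S}_{\tilde{\bvar{A}}}^{-1/2}$ are just a cleaner packaging of the paper's identical entrywise computations with $\tilde{\bvar{V}}_{ij}(\lambda_i(\tilde{\bvar{A}})-\lambda_j(\tilde{\bvar{P}}))/(\lambda_i^{1/2}(\tilde{\bvar{A}})+\lambda_j^{1/2}(\tilde{\bvar{P}}))$, and your use of the $\lambda_i(\tilde{\bvar{P}})\geq C\delta n\sqrt{m}$ floor from Lemma~\ref{Eval Order} together with Weyl matches the paper's bookkeeping.
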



We place a bound on the remaining residual terms in Lemma \ref{Centered-R-Bounds} and Lemma \ref{2d Eigenvectors Bound}. 
We do this directly following Lemma 5 in \textcite{OmniCLT}, using Lemmas \ref{Spectral Norm Bound},  \ref{Vtil}, \ref{Hoeffding}, \ref{Commuter} stated above.
These Lemmas introduce the leading order concentration rate and the argument follows directly from the above results combined with the strategy proposed in \cite{OmniCLT}. 

\begin{Lemma}\label{Centered-R-Bounds}
Define $\bvar{R}_1 = \bvar{U}_{\tilde{\bvar{P}}}\bvar{U}_{\tilde{\bvar{P}}}^T\bvar{U}_{\tilde{\bvar{A}}} - \bvar{U}_{\tilde{\bvar{P}}}\tilde{\bvar{V}}$, $\bvar{R}_2 = \tilde{\bvar{V}}\bvar{S}_{\tilde{\bvar{A}}}^{1/2} - \bvar{S}_{\tilde{\bvar{P}}}^{1/2}\tilde{\bvar{V}}$, and $\bvar{R}_3 = \bvar{U}_{\tilde{\bvar{A}}} -\bvar{U}_{\tilde{\bvar{P}}}\bvar{U}_{\tilde{\bvar{P}}}^T\bvar{U}_{\tilde{\bvar{A}}} + \bvar{R}_1$. 
Then with high probability 
\begin{align*}
\|\bvar{R}_1\bvar{S}_{\tilde{\bvar{A}}}^{1/2} + \bvar{U}_{\tilde{\bvar{P}}}\bvar{R}_2\|_F&\leq \frac{Cm^{3/2}\log mn}{\sqrt{n}}\\
\|(\bvar{I} - \bvar{U}_{\tilde{\bvar{P}}}\bvar{U}_{\tilde{\bvar{P}}}^T)(\tilde{\bvar{A}} - \tilde{\bvar{P}})\bvar{R}_3\bvar{S}_{\tilde{\bvar{A}}}^{-1/2}\|_F &\leq \frac{Cm\log nm}{\sqrt{n}}
\end{align*}
\end{Lemma}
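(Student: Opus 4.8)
The plan is to treat the two bounds separately; in each case I would first collapse the definitions of $\bvar{R}_1,\bvar{R}_2,\bvar{R}_3$ into a single tractable expression, then apply submultiplicativity together with the spectral and eigenvalue estimates already established in Lemmas~\ref{Spectral Norm Bound},~\ref{Eval Order},~\ref{Vtil}, and~\ref{Commuter}. For the first bound I would exploit $\bvar{R}_1 = \bvar{U}_{\tilde{\bvar{P}}}(\bvar{U}_{\tilde{\bvar{P}}}^T\bvar{U}_{\tilde{\bvar{A}}} - \tilde{\bvar{V}})$ to factor $\bvar{U}_{\tilde{\bvar{P}}}$ out on the left, which gives
\begin{align*}
\bvar{R}_1\bvar{S}_{\tilde{\bvar{A}}}^{1/2} + \bvar{U}_{\tilde{\bvar{P}}}\bvar{R}_2 = \bvar{U}_{\tilde{\bvar{P}}}\left[(\bvar{U}_{\tilde{\bvar{P}}}^T\bvar{U}_{\tilde{\bvar{A}}} - \tilde{\bvar{V}})\bvar{S}_{\tilde{\bvar{A}}}^{1/2} + (\tilde{\bvar{V}}\bvar{S}_{\tilde{\bvar{A}}}^{1/2} - \bvar{S}_{\tilde{\bvar{P}}}^{1/2}\tilde{\bvar{V}})\right].
\end{align*}
Since $\bvar{U}_{\tilde{\bvar{P}}}$ has orthonormal columns the left factor is norm-preserving under $\|\cdot\|_F$. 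The second bracketed term is exactly $\bvar{R}_2$, controlled by the middle estimate of Lemma~\ref{Commuter}, namely $\|\bvar{R}_2\|_F \leq Cm^{3/4}\log(nm)/\sqrt{n}$. For the first bracketed term I would pass to $\|\bvar{U}_{\tilde{\bvar{P}}}^T\bvar{U}_{\tilde{\bvar{A}}} - \tilde{\bvar{V}}\|_F\,\|\bvar{S}_{\tilde{\bvar{A}}}^{1/2}\|$ and invoke $\|\bvar{U}_{\tilde{\bvar{P}}}^T\bvar{U}_{\tilde{\bvar{A}}} - \tilde{\bvar{V}}\|_F \leq Cdm\log(nm)/n$ from Lemma~\ref{Vtil}; this will be the dominant contribution.

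The ingredient missing from both bounds is the order of the extreme eigenvalues of $\tilde{\bvar{A}}$, which I would obtain by combining Weyl's inequality with Lemmas~\ref{Spectral Norm Bound} and~\ref{Eval Order}. Because $\|\tilde{\bvar{A}} - \tilde{\bvar{P}}\| \leq Cm\sqrt{n\log nm} = o(\lambda_d(\tilde{\bvar{P}}))$ and $\lambda_d(\tilde{\bvar{P}}) \geq C\delta n\sqrt{m}$, the top $d$ eigenvalues of $\tilde{\bvar{A}}$ inherit the scaling of those of $\tilde{\bvar{P}}$: the largest is of order $nm$ (governed by $n\lambda_1(\bvar{S}^2\Delta)$ with $\bvar{S}^2_{ii} = 2^{-1}(\|\bvar{v}_i\|_1 + \sqrt{m}\|\bvar{v}_i\|_2) = O(m)$), giving $\|\bvar{S}_{\tilde{\bvar{A}}}^{1/2}\| = O(\sqrt{nm})$, while the smallest positive eigenvalue obeys $\lambda_d(\tilde{\bvar{A}}) \gtrsim n\sqrt{m}$, giving $\|\bvar{S}_{\tilde{\bvar{A}}}^{-1/2}\| = O((n\sqrt{m})^{-1/2})$. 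Feeding $\|\bvar{S}_{\tilde{\bvar{A}}}^{1/2}\| = O(\sqrt{nm})$ into the first-term estimate yields $\tfrac{dm\log nm}{n}\cdot\sqrt{nm} = O(m^{3/2}\log(nm)/\sqrt{n})$, which dominates the $\bvar{R}_2$ term and establishes the first inequality.

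For the second bound I would first observe that the $\bvar{R}_1$ inside $\bvar{R}_3$ cancels the projection term, so $\bvar{R}_3 = \bvar{U}_{\tilde{\bvar{A}}} - \bvar{U}_{\tilde{\bvar{P}}}\tilde{\bvar{V}}$; this is exactly the quantity bounded by $O(\sqrt{m\log(nm)/n})$ via Davis--Kahan in the opening display of the proof of Lemma~\ref{Commuter}. Since $\bvar{I} - \bvar{U}_{\tilde{\bvar{P}}}\bvar{U}_{\tilde{\bvar{P}}}^T$ is an orthogonal projection of spectral norm $1$, submultiplicativity gives
\begin{align*}
\|(\bvar{I} - \bvar{U}_{\tilde{\bvar{P}}}\bvar{U}_{\tilde{\bvar{P}}}^T)(\tilde{\bvar{A}} - \tilde{\bvar{P}})\bvar{R}_3\bvar{S}_{\tilde{\bvar{A}}}^{-1/2}\|_F \leq \|\tilde{\bvar{A}} - \tilde{\bvar{P}}\|\,\|\bvar{R}_3\|_F\,\|\bvar{S}_{\tilde{\bvar{A}}}^{-1/2}\|,
\end{align*}
and substituting $Cm\sqrt{n\log nm}$, $C\sqrt{m\log(nm)/n}$, and $C(n\sqrt{m})^{-1/2}$ collapses to $Cm^{5/4}\log(nm)/\sqrt{n}$ after tallying powers of $m$, $n$, and $\log nm$.

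The collapsing of $\bvar{R}_1,\bvar{R}_2,\bvar{R}_3$ and the submultiplicativity steps are essentially bookkeeping; the genuine difficulty — and the only place the ESRDPG departs from \textcite{OmniCLT} — is tracking the $m$-dependence of the eigenvalue scalings. Under the ESRDPG the top eigenvalues of $\tilde{\bvar{P}}$ can spread by a factor of $\sqrt{m}$ (order $nm$ at the top against the guaranteed $n\sqrt{m}$ at the bottom), so the factors $\|\bvar{S}_{\tilde{\bvar{A}}}^{\pm 1/2}\|$ must be estimated with their worst-case orders and then combined carefully with the $O(m\sqrt{n\log nm})$ spectral bound to land on the stated $m^{3/2}$ and $m^{5/4}$ exponents rather than the $m$-free rates of the i.i.d.\ analysis.
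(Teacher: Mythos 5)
Your proposal is correct and follows essentially the route the paper intends: the paper states this lemma without a written proof, deferring to Lemma 5 of \textcite{OmniCLT} together with Lemmas~\ref{Spectral Norm Bound}, \ref{Eval Order}, \ref{Vtil}, and \ref{Commuter}, which is exactly the decomposition (factoring $\bvar{U}_{\tilde{\bvar{P}}}$ out of $\bvar{R}_1$, collapsing $\bvar{R}_3$ to $\bvar{U}_{\tilde{\bvar{A}}} - \bvar{U}_{\tilde{\bvar{P}}}\tilde{\bvar{V}}$, and Weyl-transferring the eigenvalue orders $O(nm)$ and $\Omega(n\sqrt{m})$ from $\tilde{\bvar{P}}$ to $\tilde{\bvar{A}}$) that you carry out. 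Your bookkeeping of the $m$-exponents correctly reproduces the stated $m^{3/2}$ and $m^{5/4}$ rates, and your closing remark correctly identifies the eigenvalue spread under the ESRDPG as the only substantive departure from the i.i.d.\ analysis.
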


 
 \begin{Lemma}\label{2d Eigenvectors Bound}
 Let $\tilde{\bvar{P}}^{\perp} = -(\bvar{U}_{\tilde{\bvar{P}}}^{\perp}\bvar{S}_{\tilde{\bvar{P}}}^{\perp}\bvar{U}_{\tilde{\bvar{P}}}^{\perp})^T\in\R^{nm\times nm}$ be the negative definite part of $\tilde{\bvar{P}}$.
 Then with high probability
 \begin{align*}
     \|\tilde{\bvar{P}}^{\perp}\bvar{U}_{\tilde{\bvar{A}}}\bvar{S}_{\tilde{\bvar{A}}}^{-1/2}\|_{2,\infty}\leq C\frac{dm\log nm}{n}\\
     \|\tilde{\bvar{P}}^{\perp}\bvar{U}_{\tilde{\bvar{A}}}\bvar{S}_{\tilde{\bvar{A}}}^{-1/2}\|_{F}\leq C\frac{dm^{3/2}\log nm}{\sqrt{n}}
 \end{align*}
 \end{Lemma}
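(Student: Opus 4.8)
The plan is to exploit that the negative part $\tilde{\bvar{P}}^{-}=\tilde{\bvar{U}}_{\tilde{\bvar{P}}}\tilde{\bvar{S}}_{\tilde{\bvar{P}}}\tilde{\bvar{U}}_{\tilde{\bvar{P}}}^T$ is supported on the span of the \emph{negative} eigenvectors of $\tilde{\bvar{P}}$, a subspace that is nearly orthogonal to the top-$d$ eigenspace of $\tilde{\bvar{A}}$ (since the latter aligns with the \emph{positive} eigenspace of $\tilde{\bvar{P}}$). Writing
\begin{equation*}
\tilde{\bvar{P}}^{-}\bvar{U}_{\tilde{\bvar{A}}}\bvar{S}_{\tilde{\bvar{A}}}^{-1/2} = \tilde{\bvar{U}}_{\tilde{\bvar{P}}}\,\tilde{\bvar{S}}_{\tilde{\bvar{P}}}\,\bigl(\tilde{\bvar{U}}_{\tilde{\bvar{P}}}^T\bvar{U}_{\tilde{\bvar{A}}}\bigr)\,\bvar{S}_{\tilde{\bvar{A}}}^{-1/2},
\end{equation*}
both bounds reduce to controlling the three factors $\tilde{\bvar{S}}_{\tilde{\bvar{P}}}$, $\tilde{\bvar{U}}_{\tilde{\bvar{P}}}^T\bvar{U}_{\tilde{\bvar{A}}}$, and $\bvar{S}_{\tilde{\bvar{A}}}^{-1/2}$. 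For the Frobenius bound I would use $\|\tilde{\bvar{U}}_{\tilde{\bvar{P}}}\bvar{M}\|_F=\|\bvar{M}\|_F$ together with submultiplicativity, giving $\|\tilde{\bvar{P}}^{-}\bvar{U}_{\tilde{\bvar{A}}}\bvar{S}_{\tilde{\bvar{A}}}^{-1/2}\|_F\le \|\tilde{\bvar{S}}_{\tilde{\bvar{P}}}\|\,\|\tilde{\bvar{U}}_{\tilde{\bvar{P}}}^T\bvar{U}_{\tilde{\bvar{A}}}\|_F\,\|\bvar{S}_{\tilde{\bvar{A}}}^{-1/2}\|$; for the $2,\infty$ bound I would instead peel off the left factor via $\|\tilde{\bvar{U}}_{\tilde{\bvar{P}}}\bvar{M}\|_{2,\infty}\le \|\tilde{\bvar{U}}_{\tilde{\bvar{P}}}\|_{2,\infty}\|\bvar{M}\|$, with $\bvar{M}=\tilde{\bvar{S}}_{\tilde{\bvar{P}}}(\tilde{\bvar{U}}_{\tilde{\bvar{P}}}^T\bvar{U}_{\tilde{\bvar{A}}})\bvar{S}_{\tilde{\bvar{A}}}^{-1/2}$ and $\|\bvar{M}\|\le\|\bvar{M}\|_F$.

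The three factors I would estimate as follows. The near-orthogonality $\|\tilde{\bvar{U}}_{\tilde{\bvar{P}}}^T\bvar{U}_{\tilde{\bvar{A}}}\|_F\le C\,dm\log nm/n$ is exactly the second display of Lemma~\ref{Vtil}, and it is crucial that this is the \emph{squared}-angle (order $1/n$) bound, since a bare Davis--Kahan $\sin\theta$ estimate would not produce the required decay in $n$. The smallest surviving singular value is handled by Lemma~\ref{Eval Order} together with Weyl's inequality applied to the perturbation bound $\|\tilde{\bvar{A}}-\tilde{\bvar{P}}\|\le Cm\sqrt{n\log nm}$ of Lemma~\ref{Spectral Norm Bound}: because $\lambda_d(\tilde{\bvar{P}})\ge C\delta n\sqrt{m}$ dominates the perturbation, $\lambda_d(\tilde{\bvar{A}})=\Theta(n\sqrt{m})$ and hence $\|\bvar{S}_{\tilde{\bvar{A}}}^{-1/2}\|=O((n\sqrt{m})^{-1/2})$. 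The magnitude $\|\tilde{\bvar{S}}_{\tilde{\bvar{P}}}\|$ of the negative eigenvalues I would read off from the Kronecker decomposition $\tilde{\bvar{P}}=\sum_{j=1}^d\bvar{H}(\bvar{v}_j)\otimes\bvar{X}_{\cdot j}\bvar{X}_{\cdot j}^T$ of Lemma~\ref{Z form}: each negative eigenvalue is of order $|\lambda_{\min}(\bvar{H}(\bvar{v}_j))|\,\|\bvar{X}_{\cdot j}\|^2$, where $\|\bvar{X}_{\cdot j}\|^2=O(n)$ concentrates near $n\Delta_{jj}$ and $|\lambda_{\min}(\bvar{H}(\bvar{v}_j))|=\tfrac12(\sqrt{m}\|\bvar{v}_j\|_2-\|\bvar{v}_j\|_1)$ is given explicitly by Lemma~\ref{H eigenvalues}. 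Finally, for the $2,\infty$ estimate I need the row-wise delocalization $\|\tilde{\bvar{U}}_{\tilde{\bvar{P}}}\|_{2,\infty}=O(n^{-1/2})$, which again follows from the block structure of Lemma~\ref{Z form}, the negative eigenvectors being built from the normalized columns of $\bvar{X}$ whose entries are $O(n^{-1/2})$.

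Assembling the factors and integrating over $\bvar{X}$ then yields the two stated rates. I expect the delicate point to be the sharp accounting of the $m$-dependence: the negative-eigenvalue magnitude $\|\tilde{\bvar{S}}_{\tilde{\bvar{P}}}\|$ varies between $n\sqrt{m}$ and $nm$ according to how many of the $\{\bvar{C}_{jj}^{(g)}\}_{g}$ vanish, so pinning down the advertised $m^{3/2}$ power requires combining the $O(dm\log nm/n)$ near-orthogonality of Lemma~\ref{Vtil} with the precise $\lambda_{\min}$ formula of Lemma~\ref{H eigenvalues}, rather than bounding $\|\tilde{\bvar{S}}_{\tilde{\bvar{P}}}\|$ by the crude $\|\tilde{\bvar{P}}\|$; the extremal regime $\bvar{C}_{jj}^{(g)}=0$ for all but one $g$, which also drives the slow $O(n\sqrt{m})$ growth in Lemma~\ref{Eval Order}, is the case to track carefully.
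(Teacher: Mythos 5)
Your proposal is correct and rests on the same two pillars as the paper's proof: the factorization of $\tilde{\bvar{P}}^{-}\bvar{U}_{\tilde{\bvar{A}}}\bvar{S}_{\tilde{\bvar{A}}}^{-1/2}$ through $\tilde{\bvar{U}}_{\tilde{\bvar{P}}}^T\bvar{U}_{\tilde{\bvar{A}}}$, and the quadratic-order alignment bound $\|\tilde{\bvar{U}}_{\tilde{\bvar{P}}}^T\bvar{U}_{\tilde{\bvar{A}}}\|_F\le C\,dm\log (nm)/n$ from Lemma~\ref{Vtil} combined with $\lambda_d(\tilde{\bvar{A}})=\Theta(n\sqrt{m})$; you are right that the squared-angle estimate, not a bare $\sin\theta$ bound, is the ingredient that produces the $1/n$ decay. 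Where you diverge is in the bookkeeping of the remaining factors, and the paper's choices are worth noting because they resolve exactly the $m$-accounting worry you flag at the end. For the $2,\infty$ bound the paper splits $\tilde{\bvar{S}}_{\tilde{\bvar{P}}}=\tilde{\bvar{S}}_{\tilde{\bvar{P}}}^{1/2}\tilde{\bvar{S}}_{\tilde{\bvar{P}}}^{1/2}$ and uses the deterministic row bound $\|\tilde{\bvar{U}}_{\tilde{\bvar{P}}}\tilde{\bvar{S}}_{\tilde{\bvar{P}}}^{1/2}\|_{2,\infty}\le 1$ (the squared row norms are the diagonal entries of $\tilde{\bvar{P}}^{-}=\tilde{\bvar{P}}^{+}-\tilde{\bvar{P}}$, bounded by those of $\tilde{\bvar{P}}^{+}$, i.e.\ by $\|\bvar{S}^{(g)}\bvar{X}_i\|_2^2\le 1$); this avoids both your separate delocalization claim $\|\tilde{\bvar{U}}_{\tilde{\bvar{P}}}\|_{2,\infty}=O(n^{-1/2})$ — true, but requiring its own near-orthogonality argument for the negative eigenvectors — and any sharp control of $\|\tilde{\bvar{S}}_{\tilde{\bvar{P}}}\|$, since only the product $\|\tilde{\bvar{S}}_{\tilde{\bvar{P}}}^{1/2}\|\,\|\bvar{S}_{\tilde{\bvar{A}}}^{-1/2}\|=O(1)$ (up to small powers of $m$) enters. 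The Frobenius bound is then obtained by simply multiplying the $2,\infty$ bound by $\sqrt{nm}$ rather than by a fresh submultiplicativity estimate; your direct route, with the generic worst case $\|\tilde{\bvar{S}}_{\tilde{\bvar{P}}}\|=\Theta(nm)$ and $\|\bvar{S}_{\tilde{\bvar{A}}}^{-1/2}\|=O((n\sqrt{m})^{-1/2})$, yields $d\,m^{7/4}\log(nm)/\sqrt{n}$, slightly worse than the stated $m^{3/2}$. So your plan goes through, but to land the advertised exponents you should pair the eigenvalue factor with the eigenvector rows as the paper does rather than bounding $\|\tilde{\bvar{S}}_{\tilde{\bvar{P}}}\|$ in isolation.
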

 
\begin{proof}
 First recall that the rows of $\tilde{\bvar{U}}_{\tilde{\bvar{P}}}\tilde{\bvar{S}}_{\tilde{\bvar{P}}}^{1/2}$ are bounded in Euclidean norm by $1$.
 Using this fact and the fact $\|\tilde{\bvar{S}}_{\tilde{\bvar{P}}}^{1/2}\|\leq C(nm)^{1/2}$ and $\|\tilde{\bvar{S}}_{\tilde{\bvar{A}}}^{1/2}\|\leq C(nm)^{-1/2}$ we have 
  \begin{align*}
     \|\bvar{U}_{\tilde{\bvar{P}}}^{\perp}\bvar{S}_{\tilde{\bvar{P}}}^{\perp}\bvar{U}_{\tilde{\bvar{P}}}^{\perp})^T\bvar{U}_{\tilde{\bvar{A}}}\bvar{S}_{\tilde{\bvar{A}}}^{-1/2}\|_{2,\infty}\leq \|\bvar{U}_{\tilde{\bvar{P}}}^{\perp}|\bvar{S}_{\tilde{\bvar{P}}}^{\perp}|^{1/2}\|_{2,\infty}\||\bvar{S}_{\tilde{\bvar{P}}}^{\perp}|^{1/2}\|\|(\bvar{U}_{\tilde{\bvar{P}}}^{\perp})^T\bvar{U}_{\tilde{\bvar{A}}}\|\|\bvar{S}_{\tilde{\bvar{A}}}^{-1/2}\| \leq C\|\tilde{\bvar{U}}_{\tilde{\bvar{P}}}^T\bvar{U}_{\tilde{\bvar{A}}}\|_F
 \end{align*}
  Then applying Lemma \ref{Vtil} we have the result 
  \begin{align*}
      \|\tilde{\bvar{P}}^{\perp}\bvar{U}_{\tilde{\bvar{A}}}\bvar{S}_{\tilde{\bvar{A}}}^{-1/2}\|_{2,\infty}\leq C\|\tilde{\bvar{U}}_{\tilde{\bvar{P}}}^T\bvar{U}_{\tilde{\bvar{A}}}\|_F \leq C\frac{dm\log nm}{n}.
  \end{align*}
  We can use this result to directly establish the Frobenius norm bound.
  \begin{align*}
      \|\tilde{\bvar{P}}^{\perp}\bvar{U}_{\tilde{\bvar{A}}}\bvar{S}_{\tilde{\bvar{A}}}^{-1/2}\|_F = \sqrt{\sum_{i=1}^{nm}\|(\tilde{\bvar{P}}^{-}\bvar{U}_{\tilde{\bvar{A}}}\bvar{S}_{\tilde{\bvar{A}}}^{-1/2})_i\|_2^2}\leq \sqrt{nm}\|\tilde{\bvar{P}}^{-}\bvar{U}_{\tilde{\bvar{A}}}\bvar{S}_{\tilde{\bvar{A}}}^{-1/2}\|_{2,\infty} \leq C\frac{m^{3/2}\log nm}{\sqrt{n}}
  \end{align*}
 
 \end{proof}
 
This concludes our analysis of the first moment properties of the omnibus embedding under the ESRDPG. 
By first studying the spectral structure of the expected omnibus matrix $\tilde{\bvar{P}}$ we are able to express the omnibus embedding of $\tilde{\bvar{P}}$, $\bvar{Z}$, in terms of the latent positions $\bvar{X}$ and the corresponding scaling matrices $\{\bvar{S}^{(g)}\}_{g=1}^m$ which capture the bias of the omnibus embedding. 
Then by demonstrating spectral bound control on the difference of $\tilde{\bvar{A}}$ and $\tilde{\bvar{P}}$ and developing a lower bond on the eigenvalues of $\tilde{\bvar{P}}$ we successfully employ techniques from perturbation theory, as sketched by \textcite{OmniCLT}, to provide a uniform concentration rate of the residual term. 
Next, we turn our attention to the distributional proprieties of this residual term. 

\section{Second Moment}\label{Second Moment Appendix}

In this section we focus on the distributional properties of the residual terms introduced in Theorem \ref{Main Bias Theorem}.
We further factor this residual into three terms, an additional residual term, and two terms with an asymptotic distribution.
We will prove that this residual term converges in probability to 0 after being scaled by $\sqrt{n}$.
Next, following follow the approach introduced by \textcite{OmniCLT} \textit{mutatis mutandis}, we show one of these residual terms converges to a mixture of normal random variables and specify its variance explicitly. 
Finally, we establish this final residual term converges to a randomly projected vector in the direction of $\bvar{X}_i$, with mean zero.
We only include proofs in which the argument was fundamentally changed by the ESRDPG model. 
We first prove Theorem \ref{CLT}; highlighting results that will be justified later in the Appendix. 

\begin{proof}[Proof of Theorem~\ref{CLT}]
First consider the decomposition utilized in the proof of Theorem~\ref{Main Bias Theorem}
\begin{align*}
    (\hat{\bvar{L}}\tilde{\bvar{W}}_n - \bvar{L}_S) = (\hat{\bvar{L}} - \bvar{Z}\tilde{\bvar{V}})\tilde{\bvar{W}}_n + (\bvar{Z}\bvar{W}_n^{*T} - \bvar{L}_S) = \bvar{N} + \bvar{M}.
\end{align*}
We further expand $\bvar{N} = \bvar{H} + \bvar{R}^{(2)}\tilde{\bvar{W}}_n$
where where 
\begin{align*}
    \bvar{H} = (\tilde{\bvar{A}} - \tilde{\bvar{P}})\bvar{U}_{\tilde{\bvar{P}}}\bvar{S}_{\tilde{\bvar{P}}}^{-1/2}\tilde{\bvar{V}}\tilde{\bvar{W}}_n
\end{align*}
and $\bvar{R}^{(2)}$ is given by 
\begin{align*}
    \bvar{R}^{(2)} &= (\tilde{\bvar{A}}-\tilde{\bvar{P}}) \bvar{U}_{\tilde{\bvar{P}}}\left(\tilde{\bvar{V}} \bvar{S}_{\tilde{\bvar{A}}}^{-1/2}-\bvar{S}_{\tilde{\bvar{P}}}^{-1/2} \tilde{\bvar{V}}\right)-\bvar{U}_{\tilde{\bvar{P}}} \bvar{U}_{\tilde{\bvar{P}}}^{T}(\tilde{\bvar{A}}-\tilde{\bvar{P}}) \bvar{U}_{\tilde{\bvar{P}}} \tilde{\bvar{V}} \bvar{S}_{\tilde{\bvar{A}}}^{-1 / 2} \\ 
&+\left(\bvar{I}-\bvar{U}_{\tilde{\bvar{P}}} \bvar{U}_{\tilde{\bvar{P}}}^{T}\right)(\tilde{\bvar{A}}-\tilde{\bvar{P}}) \bvar{R}_{3} \bvar{S}_{\tilde{\bvar{A}}}^{-1/2}+\bvar{R}_{1} \bvar{S}_{\tilde{\bvar{A}}}^{1/2} + \bvar{U}_{\tilde{\bvar{P}}} \bvar{R}_{2} -(\bvar{U}_{\tilde{\bvar{P}}}^{\perp}\bvar{S}_{\tilde{\bvar{P}}}^{\perp}\bvar{U}_{\tilde{\bvar{P}}}^{\perp})^T\bvar{U}_{\tilde{\bvar{A}}}\bvar{S}_{\tilde{\bvar{A}}}^{-1/2}.
\end{align*}
This yields our second moment decomposition 
\begin{equation}\label{eq:second_moment_expansion}
\hat{\bvar{L}}\bvar{W}_n - \bvar{L}_S = \bvar{M} + \bvar{H} + \bvar{R}^{(2)}\tilde{\bvar{W}}_n.    
\end{equation}
In Lemma~\ref{Lemma:mu-distribution} and Lemma~\ref{Power Method CLT}, we first establish conditional on $\{\bvar{X}_i = \bvar{x}_i\}$ 
\begin{align*}
    \sqrt{n}\bvar{H}_h |\{\bvar{X}_i = \bvar{x}_i\} &\overset{D}{\longrightarrow} N(\bvar{0}, \Sigma_g^{(N)}(\bvar{x}_i))\\
    \sqrt{n}\bvar{M}_h |\{\bvar{X}_i = \bvar{x}_i\} &\overset{D}{\longrightarrow} N(\bvar{0}, \Sigma_g^{(M)}(\bvar{x}_i))
\end{align*}
Next, as $\tilde{\bvar{W}}_n$ is orthogonal, we show that $\sqrt{n}\bvar{R}_h^{(2)}\overset{\prob}{\longrightarrow}0$ in a series of subsequent Lemmas. 
\begin{align*}
    \sqrt{n}\bvar{R}_h^{(n)} =&\phantom{+} \sqrt{n}[(\tilde{\bvar{A}} - \tilde{\bvar{P}})\bvar{U}_{\tilde{\bvar{P}}}(\tilde{\bvar{V}}\bvar{S}_{\tilde{\bvar{A}}}^{-1/2} - \bvar{S}_{\tilde{\bvar{P}}}^{-1/2}\tilde{\bvar{V}})]_h && (\text{Lemma } \ref{root n: Easy Residual})\\
    &-\sqrt{n}[\bvar{U}_{\tilde{\bvar{P}}}\bvar{U}_{\tilde{\bvar{P}}}^T(\tilde{\bvar{A}} - \tilde{\bvar{P}})\bvar{U}_{\tilde{\bvar{P}}}\tilde{\bvar{V}}\bvar{S}_{\tilde{\bvar{A}}}^{-1/2}]_h&& (\text{Lemma } \ref{root n: Easy Residual})\\
    &+\sqrt{n}[(\bvar{I} - \bvar{U}_{\tilde{\bvar{P}}}\bvar{U}_{\tilde{\bvar{P}}}^T)(\tilde{\bvar{A}} - \tilde{\bvar{P}})\bvar{R}_3\bvar{S}_{\tilde{\bvar{A}}}^{-1/2}]_h && (\text{Lemma } \ref{root n: Exchangeable})\\
    &+ \sqrt{n}[\bvar{R}_1\bvar{S}_{\tilde{\bvar{A}}}^{1/2} + \bvar{U}_{\tilde{\bvar{P}}}\bvar{R}_2]_h &&(\text{Lemma } \ref{root n: Easy Residual})\\
    &-\sqrt{n}[(\bvar{U}_{\tilde{\bvar{P}}}^{\perp}\bvar{S}_{\tilde{\bvar{P}}}^{\perp}\bvar{U}_{\tilde{\bvar{P}}}^{\perp})^T\bvar{U}_{\tilde{\bvar{A}}}\bvar{S}_{\tilde{\bvar{A}}}^{-1/2}]_h && (\text{Lemma \ref{root n: Easy Residual}})
\end{align*}
Finally, we employ Slutsky's Theorem and the Lebesgue Dominated Convergence Theorem so integrating over all possible values of $\bvar{x}_i$ yields the result 
\begin{align*}
    \lim_{n\to\infty}\prob\left[\sqrt{n}(\bvar{M} + \bvar{N})_h \leq \bvar{x}\right] = \int_{\text{supp}(F)}\Phi(\bvar{x}; \bvar{0}, \Sigma_g^{(N)}(\bvar{y}) + \Sigma_g^{(M)}(\bvar{y}) + \Sigma_g^{(N,M)}(\bvar{y}))dF(\bvar{y}) 
\end{align*}
where $\Phi(\bvar{x}; \mu, \Sigma)$ is the normal cumulative distribution function with mean $\mu$ and covariance matrix $\Sigma$ evaluated at $\bvar{x}$ and $\Sigma_g^{(N,M)}(\bvar{y})$ is the covariance between $\bvar{M}_h$ and $\bvar{N}_h$. 
\end{proof}

\begin{Lemma}\label{Lemma:mu-distribution}
Let $\bvar{M}_h = (\bvar{Z}\bvar{W}_n^{*T} - \bvar{L}_S)_h$.
Then, conditioning on the event $\{\bvar{X}_i = \bvar{x}_i\}$, we have
\begin{align*}
    \sqrt{n}\bvar{M}_h|\{\bvar{X}_i = \bvar{x}_i\} \overset{D}{\longrightarrow} N(0, \Sigma_g^{(M)}(\bvar{x}_i))
\end{align*}
\end{Lemma}
\begin{proof}
For ease of notation, for all that follows condition on the event $\{\bvar{X}_i = \bvar{x}_i\}$.
Notice we can rewrite $\bvar{M}$ as 
\begin{align*}
    \bvar{M} &= \left(\bvar{Z}\bvar{W}_n^{*T} - \bvar{L}_S\right) = \left([\bvar{Z}|\bvar{Z}_{\perp}]\bvar{W}_n^T - \bvar{L}_K\right)\begin{bmatrix}
    \bvar{I}_{d\times d}\\\bvar{0}_{q\times d}\end{bmatrix} 
    = \bvar{L}_K\left(\bvar{Q}^{-1}\bvar{W}_n^T - \bvar{I}\right)\begin{bmatrix}
    \bvar{I}_{d\times d}\\\bvar{0}_{q\times d}\end{bmatrix}\\ 
    &= \bvar{L}_K\bvar{I}_{d,q}\left(\bvar{W}_n\bvar{Q} - \bvar{I}\right)^T\begin{bmatrix}
    \bvar{I}_{d\times d}\\\bvar{0}_{q\times d}\end{bmatrix}.
\end{align*}
Then, for $h = n(g-1) + i$ for some $i\in[n]$ and $g\in[m]$ we have
\begin{align*}
    \bvar{M}_h = [\bvar{I}_{d\times d}, \bvar{0}_{d\times q}]\left(\bvar{W}_n\bvar{Q} - \bvar{I}\right)[\bvar{S}^{(g)},  \bvar{S}^{(g)}_{\perp}]^T\bvar{x}_i
\end{align*}

For a positive definite matrix $\bvar{B}\in\R^{p\times p}$, let $ f_{\bvar{Q}}:\R^{p\times p}\to\R^{p\times p}$ be given by 
\begin{align*}
    f_{\bvar{Q}}(\bvar{B}) = [(\bvar{B}^{1/2}\bvar{I}_{d,q}\bvar{B}^{1/2})^{-1/2}]^T\bvar{B}^{1/2}
\end{align*} where $\bvar{B}^{1/2} = \bvar{U}_{\bvar{B}}|\bvar{S}_{\bvar{B}}|^{1/2}$ and $\bvar{B}^{-1/2} = \bvar{U}_{\bvar{B}}|\bvar{S}_{\bvar{B}}|^{-1/2}$ corresponding to the eigen-decomposition $\bvar{B} = \bvar{U}_{\bvar{B}}\bvar{S}_{\bvar{B}}\bvar{U}_{\bvar{B}}^T$.
Notice that $f_{\bvar{Q}}(\bvar{B}) = f_{\bvar{Q}}(c\bvar{B})$ for any $c\neq 0$ as 
\begin{align*}
    f_{\bvar{Q}}(\bvar{B}) &= [(\bvar{B}^{1/2}\bvar{I}_{d,q}\bvar{B}^{1/2})^{-1/2}]^T\bvar{B}^{1/2} = [((c\bvar{B})^{1/2}\bvar{I}_{d,q}(c\bvar{B})^{1/2}))^{-1/2}]^T(c\bvar{B})^{1/2}.
\end{align*}
Let $\Delta_K = \bvar{K}^T(\bvar{I}\otimes \Delta)\bvar{K}$ and with these observations, notice we can write 
\begin{align*}
    \sqrt{n}\bvar{M}_h &= \sqrt{n}[\bvar{I}_{d\times d},\bvar{0}_{d\times q}](\bvar{W}_nf_{\bvar{Q}}(\bvar{L}_K^T\bvar{L}_K) - f_{\bvar{Q}}(\Delta_K))\bvar{I}_{d,q}[\bvar{S}^{(g)},  \bvar{S}^{(g)}_{\perp}]^T\bvar{x}_i\\ 
    &= \sqrt{n}[\bvar{I}_{d\times d},\bvar{0}_{d\times q}](\bvar{W}_nf_{\bvar{Q}}(n^{-1}\bvar{L}_K^T\bvar{L}_K) - f_{\bvar{Q}}(\Delta_K))\bvar{I}_{d,q}[\bvar{S}^{(g)},  \bvar{S}^{(g)}_{\perp}]^T\bvar{x}_i
\end{align*}
We note that function $f_{\bvar{Q}}(\bvar{B})$ is only defined up to some ordering of the eigenvalues of $\bvar{B}$.
In our context, $f_{\bvar{Q}}(\Delta_K)$ is well defined as we fix an ordering of the eigenvalues of $\Delta$ at the outset.
Moreover, the rotation matrix $\bvar{W}_n$ ensures that $f_{\bvar{Q}}(n^{-1}\bvar{L}_K^T\bvar{L}_K)$ is rotated to align the with the eigenvectors of $\Delta_K$.

Recall from the proof of Lemma~\ref{Eval Order}, that $\|n^{-1}\bvar{L}_K^T\bvar{L}_K - \Delta_K\|_F \leq O\left(d^2m\sqrt{\frac{\log nm}{n}}\right)$ hence $n^{-1}\bvar{L}_K^T\bvar{L}_K|\{\bvar{X}_i = \bvar{x}_i\}\overset{\prob}{\longrightarrow}\Delta_K$.
Defining $\bvar{K}^{(g)} = [\bvar{S}^{(g)}|\bvar{S}_{\perp}^{(g)}]$ and scaling this sum by $\sqrt{n}$ yields
\begin{align*}
    \sqrt{n}\text{vec}\left[\sum_{g=1}^m\bvar{K}^{(g)T}(n^{-1}\bvar{X}^T\bvar{X} - \Delta)\bvar{K}^{(g)}\right] &=\frac{1}{\sqrt{n}}\sum_{j=1}^n\sum_{g=1}^m\text{vec}(\bvar{K}^{(g)T}(\bvar{X}_j\bvar{X}_j^T - \Delta)\bvar{K}^{(g)})\\
    &=\frac{1}{\sqrt{n}}\sum_{j=1}^n\sum_{g=1}^m(\bvar{K}^{(g)}\otimes \bvar{K}^{(g)})^T\text{vec}(\bvar{X}_j\bvar{X}_j^T - \Delta) \\
    &=\frac{1}{\sqrt{n}}\sum_{j\neq i}\sum_{g=1}^m(\bvar{K}^{(g)}\otimes \bvar{K}^{(g)})^T(\bvar{X}_j\otimes \bvar{X}_j - \text{vec}(\Delta))\\
    &+\frac{1}{\sqrt{n}}\sum_{g=1}^m(\bvar{K}^{(g)}\otimes \bvar{K}^{(g)})^T(\bvar{x}_i\otimes \bvar{x}_i - \text{vec}(\Delta)).
\end{align*}
As $\bvar{K}^{(g)}$ is bounded and independent of $n$, $n^{-1/2}\sum_{g=1}^m(\bvar{K}^{(g)}\otimes \bvar{K}^{(g)})^T(\bvar{x}_i\otimes \bvar{x}_i - \text{vec}(\Delta))\overset{\prob}{\longrightarrow}\bvar{0}$.
Then by the multivariate central limit theorem $\sqrt{n}\text{vec}(\bvar{L}_K^T\bvar{L}_K - \Delta_K)|\{\bvar{X}_i = \bvar{x}_i\} \overset{D}{\longrightarrow}N(0, \Sigma_{\otimes})$
where $\Sigma_{\otimes}$ is given by
\begin{align*}
    \Sigma_{\otimes} = \sum_{g=1}^m(\bvar{K}^{(g)}\otimes \bvar{K}^{(g)})^T\E[(\bvar{y}\otimes \bvar{y}- \text{vec}(\Delta))(\bvar{y}\otimes \bvar{y} - \text{vec}(\Delta))^T](\bvar{K}^{(g)}\otimes \bvar{K}^{(g)})
\end{align*}
for $\bvar{y}\sim F$.
Notice as $\bvar{yy}^T - \Delta$ is symmetric, the matrix $\E[(\bvar{y}\otimes \bvar{y}- \text{vec}(\Delta))(\bvar{y}\otimes \bvar{y} - \text{vec}(\Delta))^T]$ has rank $d(d+1)/2$ and as a result $\Sigma_{\otimes}$ is degenerate. 

As $f_{\bvar{Q}}(\bvar{B})$ is a function of the eigenvectors and values of $\bvar{B}$, and the eigenvectors and eigenvalues of $\bvar{B}$ are continuous functions of the entries of $\bvar{B}$ and is differentiable at $f_{\bvar{Q}}(\Delta_K)$.
With this observation and by the delta method stated in Theorem 3.1.5 of \textcite{Kollo}, we have 
\begin{align*}
    \sqrt{n}\text{vec}\left(\bvar{W}_nf_{\bvar{Q}}(n^{-1}\bvar{L}_K^T\bvar{L}_K) - f_{\bvar{Q}}(\Delta_K)\right) \overset{D}{\longrightarrow} N(\bvar{0},\Sigma_{\bvar{Q}})
\end{align*}
where $\Sigma_{\bvar{Q}}$ is given by
\begin{align*}
    \Sigma_{\bvar{Q}} = \left(\frac{d f_{\bvar{Q}}(\bvar{B})}{d\bvar{B}}\Big|_{\bvar{B} = \Delta_K}\right)^T\Sigma_{\otimes}\left(\frac{d f_{\bvar{Q}}(\bvar{B})}{d\bvar{B}}\Big|_{\bvar{B} = \Delta_K}\right).
\end{align*}
Therefore, the entries of $\sqrt{n}(\bvar{W}_n\bvar{Q} - \bvar{I})$ are asymptotically normally distributed with a degenerate covariance structure.
Therefore, letting $n\to\infty$, the elements of $[\bvar{I}_{d\times d}, \bvar{0}_{d\times q}]\left(\bvar{W}_n\bvar{Q} - \bvar{I}\right)[\bvar{S}^{(g)},  \bvar{S}^{(g)}_{\perp}]^T\bvar{x}_i$ are a linear combination of normal random variables with mean zero. 
Hence, 
\begin{align*}
\lim_{n\to\infty}\sqrt{n}\bvar{M}_h|\{\bvar{X}_i = \bvar{x}_i\} \overset{D}{\longrightarrow}N(0, \Sigma_g^{(M)}(\bvar{x}_i))    
\end{align*}
for covariance matrix $\Sigma_g^{(M)}(\bvar{x}_i)$ concluding the proof.
 \end{proof}

\begin{Lemma}\label{Power Method CLT}
Conditional on the event $\{\bvar{X}_i = \bvar{x}_i\}$ we have
\begin{align*}
    \sqrt{n}[(\tilde{\bvar{A}} - \tilde{\bvar{P}})\bvar{U}_{\tilde{\bvar{P}}}\bvar{S}_{\tilde{\bvar{P}}}^{-1/2}\tilde{\bvar{V}}\tilde{\bvar{W}}_n]_h |\{\bvar{X}_i = \bvar{x}_i\} &\overset{D}{\longrightarrow} N(\bvar{0}, \Sigma_g^{(N)}(\bvar{x}_i))
\end{align*} 
where the covariance matrix is given by
\begin{align*}
\Sigma_g^{(N)}(\bvar{x}_i) = \frac{1}{4}\Delta_S^{-1}\left[(\bvar{S}^{(g)} + m\bar{\bvar{S}})\tilde{\Sigma}_g(\bvar{x}_i)(\bvar{S}^{(g)} + m\bar{\bvar{S}}) + \sum_{k\neq g}\bvar{S}^{(k)}\tilde{\Sigma}_k(\bvar{x}_i)\bvar{S}^{(k)}\right]\Delta_S^{-1}
\end{align*}
and $\tilde{\Sigma}_g(\bvar{x}_i)$ is given by
\begin{align*}
    \tilde{\Sigma}_g(\bvar{x}_i) &= \E\left[(\bvar{x}_i^T\bvar{C}^{(g)}\bvar{X}_j - (\bvar{x}_i^T\bvar{C}^{(g)}\bvar{X}_j)^2)\bvar{X}_j\bvar{X}_j^T\right]
\end{align*}

\end{Lemma}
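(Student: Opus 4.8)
The plan is to reduce the statement to a multivariate central limit theorem for a sum of conditionally independent, mean-zero vectors, after first disposing of the scaling matrix $\tilde{\bvar{W}}_n\bvar{S}_{\tilde{\bvar{P}}}^{-1}\tilde{\bvar{W}}_n^T$. The crucial observation is that this scaling is \emph{rotation-free}: by Lemma~\ref{Z form} the rotation $\tilde{\bvar{W}}_n$ satisfies $\bvar{L}_S\tilde{\bvar{W}}_n = \bvar{Z} = \bvar{U}_{\tilde{\bvar{P}}}\bvar{S}_{\tilde{\bvar{P}}}^{1/2}$, so that $\bvar{L}_S^T\bvar{L}_S = \tilde{\bvar{W}}_n\bvar{S}_{\tilde{\bvar{P}}}\tilde{\bvar{W}}_n^T$ and hence $\tilde{\bvar{W}}_n\bvar{S}_{\tilde{\bvar{P}}}^{-1}\tilde{\bvar{W}}_n^T = (\bvar{L}_S^T\bvar{L}_S)^{-1}$. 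Consequently the quantity of interest equals $(\tilde{\bvar{A}}-\tilde{\bvar{P}})\bvar{L}_S(\bvar{L}_S^T\bvar{L}_S)^{-1}$, which no longer involves $\bvar{U}_{\tilde{\bvar{P}}}$ or $\tilde{\bvar{W}}_n$ explicitly; this sidesteps any non-uniqueness of $\tilde{\bvar{W}}_n$ under repeated eigenvalues and, unlike $\bvar{R}^{(2)}$, requires no eigenvector perturbation. Since $\tfrac{1}{n}\bvar{L}_S^T\bvar{L}_S = \sum_{k}\bvar{S}^{(k)}(\tfrac{1}{n}\bvar{X}^T\bvar{X})\bvar{S}^{(k)}\to \bvar{S}^2\Delta$ almost surely by the strong law, we obtain $n(\bvar{L}_S^T\bvar{L}_S)^{-1}\to(\bvar{S}^2\Delta)^{-1}$, so that $\sqrt{n}[(\tilde{\bvar{A}}-\tilde{\bvar{P}})\bvar{L}_S(\bvar{L}_S^T\bvar{L}_S)^{-1}]_h = \big(n(\bvar{L}_S^T\bvar{L}_S)^{-1}\big)\cdot n^{-1/2}[(\tilde{\bvar{A}}-\tilde{\bvar{P}})\bvar{L}_S]_h$ and it suffices to find the limit of the second factor.

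Next I would expand the $h$-th row, $h=n(g-1)+i$. Writing $\bvar{E}^{(k)} := \bvar{A}^{(k)}-\bvar{P}^{(k)}$ and using that each omnibus block obeys $\tilde{\bvar{A}}_{gk}-\tilde{\bvar{P}}_{gk} = \tfrac{1}{2}(\bvar{E}^{(g)}+\bvar{E}^{(k)})$, a direct computation (collecting the $\bvar{E}^{(g)}$ contributions via $\sum_{k}\bvar{S}^{(k)} = m\bar{\bvar{S}}$) gives, as a column vector,
\[
[(\tilde{\bvar{A}} - \tilde{\bvar{P}})\bvar{L}_S]_h = \frac{1}{2}\sum_{j}\Big[(\bvar{S}^{(g)} + m\bar{\bvar{S}})\bvar{E}^{(g)}_{ij} + \sum_{k\neq g}\bvar{S}^{(k)}\bvar{E}^{(k)}_{ij}\Big]\bvar{X}_j =: \sum_j \bvar{t}_j .
\]
Conditional on $\{\bvar{X}_i = \bvar{x}_i\}$ the summands $\{\bvar{t}_j\}_{j\neq i}$ are independent, because the latent positions are i.i.d.\ and the graphs are conditionally independent given $\bvar{X}$; each has mean zero since $\E[\bvar{E}^{(k)}_{ij}\mid\bvar{X}]=0$. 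The single diagonal term $j=i$ is $O(1)$ and vanishes after the $n^{-1/2}$ normalization.

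The heart of the argument is the covariance computation. For fixed $j$, the cross-graph products vanish: for $\ell\neq\ell'$ we have $\E[\bvar{E}^{(\ell)}_{ij}\bvar{E}^{(\ell')}_{ij}\mid\bvar{X}]=0$ by conditional independence of $\bvar{A}^{(\ell)}$ and $\bvar{A}^{(\ell')}$, so only the squared terms survive. Using the Bernoulli variance $\E[(\bvar{E}^{(\ell)}_{ij})^2\mid\bvar{X}] = \bvar{X}_i^T\bvar{C}^{(\ell)}\bvar{X}_j\,(1-\bvar{X}_i^T\bvar{C}^{(\ell)}\bvar{X}_j)$ and integrating over $\bvar{X}_j\sim F$ reproduces exactly $\tilde{\Sigma}_\ell(\bvar{x}_i)$, so that
\[
\mathrm{Cov}(\bvar{t}_j \mid \bvar{X}_i = \bvar{x}_i) = \frac{1}{4}\Big[(\bvar{S}^{(g)} + m\bar{\bvar{S}})\tilde{\Sigma}_g(\bvar{x}_i)(\bvar{S}^{(g)} + m\bar{\bvar{S}}) + \sum_{k\neq g}\bvar{S}^{(k)}\tilde{\Sigma}_k(\bvar{x}_i)\bvar{S}^{(k)}\Big],
\]
which is precisely the inner bracket of $\Sigma_g(\bvar{x}_i)$.

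Finally, since $\mathrm{supp}(F)$ is bounded and $|\bvar{E}^{(k)}_{ij}|\le 1$, the $\bvar{t}_j$ have uniformly bounded third moments, so the Lyapunov condition holds trivially and the multivariate Lindeberg--Feller theorem yields $n^{-1/2}\sum_{j\neq i}\bvar{t}_j \overset{D}{\longrightarrow} N(\bvar{0},\mathrm{Cov}(\bvar{t}_j\mid\bvar{x}_i))$. Combining with $n(\bvar{L}_S^T\bvar{L}_S)^{-1}\to(\bvar{S}^2\Delta)^{-1}$ through Slutsky's theorem, and noting that $\bvar{S}^2\Delta$ is diagonal so $(\bvar{S}^2\Delta)^{-1}=(\Delta\bvar{S}^2)^{-1}$, the affine image $N(\bvar{0},(\bvar{S}^2\Delta)^{-1}\mathrm{Cov}(\bvar{t}_j\mid\bvar{x}_i)(\Delta\bvar{S}^2)^{-1})$ is exactly $N(\bvar{0},\Sigma_g(\bvar{x}_i))$, as claimed. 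I expect the main obstacle to be the covariance bookkeeping---verifying the cancellation of the cross-graph terms and tracking the precise own-graph coefficient $\tfrac{1}{2}(\bvar{S}^{(g)}+m\bar{\bvar{S}})$---rather than any delicate probabilistic estimate, since the rotation-free reduction turns this into a genuine sum-of-independent-vectors central limit theorem with all the hard perturbation error relegated to $\bvar{R}^{(2)}$.
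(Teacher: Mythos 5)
Your proposal is correct and follows essentially the same route as the paper's proof: factor the row as $n\tilde{\bvar{W}}_n\bvar{S}_{\tilde{\bvar{P}}}^{-1}\tilde{\bvar{W}}_n^T$ times $n^{-1/2}[(\tilde{\bvar{A}}-\tilde{\bvar{P}})\bvar{L}_S]_h$, use $\tilde{\bvar{W}}_n\bvar{S}_{\tilde{\bvar{P}}}\tilde{\bvar{W}}_n^T=\bvar{L}_S^T\bvar{L}_S$ and the strong law to send the first factor to $(\bvar{S}^2\Delta)^{-1}$, expand the second into a sum of conditionally independent mean-zero vectors with own-graph coefficient $\tfrac{1}{2}(\bvar{S}^{(g)}+m\bar{\bvar{S}})$ (which equals the paper's $\bvar{S}^{(g)}+\sum_{k\neq g}\tfrac{1}{2}\bvar{S}^{(k)}$), discard the $j=i$ term, and conclude by the multivariate CLT and Slutsky with the identical covariance bookkeeping. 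The only cosmetic difference is that you state the rotation-free identity $\tilde{\bvar{W}}_n\bvar{S}_{\tilde{\bvar{P}}}^{-1}\tilde{\bvar{W}}_n^T=(\bvar{L}_S^T\bvar{L}_S)^{-1}$ explicitly, which the paper uses implicitly.
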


\begin{proof}
For notational convenience, for all that follows we will condition on the event $\{\bvar{X}_i = \bvar{x}_i\}$.
First notice that we can rewrite this term as follows
\begin{align*}
    &[(\tilde{\bvar{A}} - \tilde{\bvar{P}})\bvar{U}_{\tilde{\bvar{P}}}\bvar{S}_{\tilde{\bvar{P}}}^{-1/2}\tilde{\bvar{V}}\tilde{\bvar{W}}_n]_h \\
    &= (n\bvar{W}_n^*\bvar{S}_{\tilde{\bvar{P}}}^{-1}\bvar{W}_n^{*T})[\bvar{I}_{d\times d}, \bvar{0}_{d\times q}] (\bvar{W}_n\bvar{Q})\bvar{I}_{d,q}\left[\frac{1}{\sqrt{n}}(\tilde{\bvar{A}} - \tilde{\bvar{P}})(\bvar{I}\otimes \bvar{X})[\bvar{S}|\bvar{S}_{\perp}]\right]_h
\end{align*}
We will focus on each term in this product individually and then use Slutsky's Theorem to establish the result. 
First notice that we can write $\bvar{S}_{\tilde{\bvar{P}}} = \bvar{Z}^T\bvar{Z}$ where $\bvar{Z} = [\bvar{Z}|\bvar{Z}_{\perp}][\bvar{I}_{d\times d},\bvar{0}_{d\times q}]^T =\bvar{L}_K\bvar{Q}^{-1}[\bvar{I}_{d\times d},\bvar{0}_{d\times q}]^T$.
Therefore, we can write 
\begin{align*}
    \bvar{W}_n^{*}\bvar{S}_{\tilde{\bvar{P}}}\bvar{W}_n^{*T} &= \bvar{W}_n^{*}\begin{bmatrix}
    \bvar{I}_{d\times d}, 
    \bvar{0}_{d\times q}
    \end{bmatrix}
    \bvar{Q}\bvar{I}_{d,q}\bvar{L}_K^T\bvar{L}_K\bvar{I}_{d,q}\bvar{Q}\begin{bmatrix}
    \bvar{I}_{d\times d} \\ 
    \bvar{0}_{q\times d}
    \end{bmatrix}\bvar{W}_n^{*T}\\
    &= 
    \begin{bmatrix}
    \bvar{I}_{d\times d}, 
    \bvar{0}_{d\times q}
    \end{bmatrix}
    \bvar{W}_n\bvar{Q}\bvar{I}_{d,q}\bvar{L}_K^T\bvar{L}_K\bvar{I}_{d,q}\bvar{Q}^T\bvar{W}_n^T\begin{bmatrix}
    \bvar{I}_{d\times d}\\
    \bvar{0}_{q\times d}
    \end{bmatrix}
\end{align*}
where we use the fact $\bvar{Q}^{-1} = \bvar{I}_{d,q}\bvar{Q}^T\bvar{I}_{d,q}$.
In Lemma~\ref{Lemma:Q_characterization} we establish $\bvar{W}_n\bvar{Q}\overset{a.s.}{\longrightarrow}\bvar{I}$ and in Lemma~\ref{Eval Order} we establish $n^{-1}\bvar{L}_{K}^T\bvar{L}_K\overset{a.s.}{\longrightarrow}\Delta_K$.
Therefore, we see 
\begin{align*}
    n^{-1}\bvar{W}_n^{*}\bvar{S}_{\tilde{\bvar{P}}}\bvar{W}_n^T \overset{a.s.}{\longrightarrow} \begin{bmatrix}
    \bvar{I}_{d\times d}, 
    \bvar{0}_{d\times q}
    \end{bmatrix}
    \Delta_K\begin{bmatrix}
    \bvar{I}_{d\times d} \\ 
    \bvar{0}_{q\times d}
    \end{bmatrix} = \Delta_S.
\end{align*}
Therefore using the continuous mapping theorem establishes $n\bvar{W}_n^{*}\bvar{S}_{\tilde{\bvar{P}}}^{-1}\bvar{W}_n^{*T}\overset{a.s.}{\longrightarrow}\Delta_S^{-1}$.
Combining this limiting result with the other leading terms gives 
\begin{align*}
    (n\bvar{W}_n^*\bvar{S}_{\tilde{\bvar{P}}}^{-1}\bvar{W}_n^{*T})[\bvar{I}_{d\times d}, \bvar{0}_{d\times q}] (\bvar{W}_n\bvar{Q})\bvar{I}_{d,q} &\overset{a.s.}{\longrightarrow} \Delta_S^{-1}[\bvar{I}_{d\times d}, \bvar{0}_{d\times q}]
\end{align*}
Next, consider the decomposition of the remaining term 
\begin{align*}
    \frac{1}{\sqrt{n}}\left[(\tilde{\bvar{A}} - \tilde{\bvar{P}})(\bvar{I}\otimes \bvar{X})[\bvar{S}|\bvar{S}_{\perp}]\right]_h &=  \frac{1}{\sqrt{n}}\left[\sum_{k = 1}^m\left(\frac{\bvar{A}^{(g)} - \bvar{P}^{(g)}}{2} + \frac{\bvar{A}^{(k)} - \bvar{P}^{(k)}}{2}\right)\bvar{X}[\bvar{S}^{(k)}|\bvar{S}_{\perp}^{(k)}] \right]_i\\
    &= \frac{1}{\sqrt{n}}\sum_{k = 1}^m[\bvar{S}^{(k)}|\bvar{S}_{\perp}^{(k)}]^T\sum_{j=1}^n\left(\frac{\bvar{A}_{ij}^{(g)} - \bvar{P}_{ij}^{(g)}}{2} + \frac{\bvar{A}_{ij}^{(k)} - \bvar{P}_{ij}^{(k)}}{2}\right)\bvar{X}_j\\
    &= \sum_{k = 1}^m[\bvar{S}^{(k)}|\bvar{S}_{\perp}^{(k)}]^T\frac{1}{\sqrt{n}}\left\{\sum_{j\neq i}\left(\frac{\bvar{A}_{ij}^{(g)} - \bvar{x}_i^T\bvar{C}^{(g)}\bvar{X}_j}{2} + \frac{\bvar{A}_{ij}^{(k)} - \bvar{x}_i^T\bvar{C}^{(k)}\bvar{X}_j}{2}\right)\bvar{X}_j\right\}\\
    &-\sum_{k = 1}^m[\bvar{S}^{(k)}|\bvar{S}_{\perp}^{(k)}]^T\frac{1}{\sqrt{n}}\left\{\frac{\bvar{x}_i^T\bvar{C}^{(g)}\bvar{x}_i}{2} + \frac{\bvar{x}_i^T\bvar{C}^{(k)}\bvar{x}_i}{2}\right\}
\end{align*}
Notice as $\bvar{x}_i^T\bvar{C}^{(g)}\bvar{x}_i$ and $\bvar{x}_i^T\bvar{C}^{(k)}\bvar{x}_i$ are bounded by 1. 
Moreover, as $[\bvar{S}^{(k)}|\bvar{S}_{\perp}^{(k)}]$ is independent of $n$ for all $k\in[m]$
\begin{align*}
    -\sum_{k = 1}^m[\bvar{S}^{(k)}|\bvar{S}_{\perp}^{(k)}]^T\frac{1}{\sqrt{n}}\left\{\frac{\bvar{x}_i^T\bvar{C}^{(g)}\bvar{x}_i}{2} + \frac{\bvar{x}_i^T\bvar{C}^{(k)}\bvar{x}_i}{2}\right\} \overset{\prob}{\longrightarrow} \bvar{0} 
\end{align*}
Focusing on this second term, notice that we can write the sum of $n-1$ random variables as 
\begin{align*}
    \frac{1}{\sqrt{n}}\sum_{j\neq i}\left\{\left([\bvar{S}^{(g)}|\bvar{S}_{\perp}^{(g)}]^T + \sum_{k \neq g }\frac{1}{2}[\bvar{S}^{(k)}|\bvar{S}_{\perp}^{(k)}]^T\right)(\bvar{A}_{ij}^{(g)} - \bvar{x}_i^T\bvar{C}^{(g)}\bvar{X}_j)\bvar{X}_j + \sum_{k\neq g}\frac{1}{2}\bvar{S}^{(k)}(\bvar{A}_{ij}^{(k)} - \bvar{x}_i^T\bvar{C}^{(k)}\bvar{X}_j)\bvar{X}_j\right\} 
\end{align*}
From which the the multivariate central limit theorem gives
\begin{align*}
    \frac{1}{\sqrt{n}}\sum_{j\neq i}\Big\{\Big([\bvar{S}^{(g)}|\bvar{S}_{\perp}^{(g)}]^T &+ \sum_{k \neq g} \frac{1}{2}[\bvar{S}^{(k)}|\bvar{S}_{\perp}^{(k)}]^T\Big)(\bvar{A}_{ij}^{(g)} - \bvar{x}_i^T\bvar{C}^{(g)}\bvar{X}_j)\bvar{X}_j\\ 
    &+ \sum_{k\neq g}\frac{1}{2}\bvar{S}^{(k)}(\bvar{A}_{ij}^{(k)} - \bvar{x}_i^T\bvar{C}^{(k)}\bvar{X}_j)\bvar{X}_j\Big\} \overset{D}{\longrightarrow} N(0,\bar{\Sigma}_g(\bvar{x}_i))
\end{align*}
where the covariance matrix is given by
\begin{align*}
    \bar{\Sigma}_g(\bvar{x}_i) &= \Big([\bvar{S}^{(g)}|\bvar{S}_{\perp}^{(g)}]^T + \sum_{k \neq g} \frac{1}{2}[\bvar{S}^{(k)}|\bvar{S}_{\perp}^{(k)}]^T\Big)\tilde{\Sigma}_g(\bvar{x}_i)\Big([\bvar{S}^{(g)}|\bvar{S}_{\perp}^{(g)}]^T + \sum_{k \neq g} \frac{1}{2}[\bvar{S}^{(k)}|\bvar{S}_{\perp}^{(k)}]\Big)\\ 
    &+ \frac{1}{4}\sum_{k\neq g}[\bvar{S}^{(k)}|\bvar{S}_{\perp}^{(k)}]^T\tilde{\Sigma}_k(\bvar{x}_i)[\bvar{S}^{(k)}|\bvar{S}_{\perp}^{(k)}]
\end{align*}
and $\tilde{\Sigma}_k(\bvar{x}_i)$ is given by $\tilde{\Sigma}_k(\bvar{x}_i) = \E\left[(\bvar{x}_i^T\bvar{C}^{(k)}\bvar{X}_j - (\bvar{x}_i^T\bvar{C}^{(k)}\bvar{X}_j)^2)\bvar{X}_j\bvar{X}_j^T\right]$.
Notice by pre and most multiplying this covariance by $\Delta_S^{-1}[\bvar{I}_{d,\times d}, \bvar{0}_{d,\times q}]$ gives 
\begin{align*}
    \Delta_S^{-1}[\bvar{I}_{d,\times d}, \bvar{0}_{d,\times q}]\bar{\Sigma}_g(\bvar{x}_i)\Delta_S[\bvar{I}_{d,\times d}, \bvar{0}_{d,\times q}]^T\Delta_S^{-1} &= \Delta_S^{-1}\Big(\bvar{S}^{(g)} + \sum_{k \neq g} \frac{1}{2}\bvar{S}^{(k)}\Big)\tilde{\Sigma}_g(\bvar{x}_i)\Big(\bvar{S}^{(g)} + \sum_{k \neq g} \frac{1}{2}\bvar{S}^{(k)}\Big)\Delta_S^{-1}\\ 
    &+ \frac{1}{4}\Delta_S^{-1}\sum_{k\neq g}\bvar{S}^{(k)}\tilde{\Sigma}_k(\bvar{x}_i)\bvar{S}^{(k)}\Delta_S^{-1} 
\end{align*}

Therefore, applying the multivariate Slutsky's theorem then provides
\begin{align*}
    \sqrt{n}[(\tilde{\bvar{A}} - \tilde{\bvar{P}})\bvar{L}_S\tilde{\bvar{W}}_n\tilde{\bvar{S}}_{\tilde{\bvar{P}}}^{-1}\tilde{\bvar{W}}_n^T]_h \overset{D}{\longrightarrow} N\left(\bvar{0}, \Sigma_g^{(N)}(\bvar{x}_i)\right)
\end{align*}
where 
\begin{align*}
\Sigma_g^{(N)}(\bvar{x}_i) = \frac{1}{4}\Delta_S^{-1}\left[\Big(\bvar{S}^{(g)} + m\bar{\bvar{S}})\tilde{\Sigma}_g(\bvar{x}_i)(\bvar{S}^{(g)} + \bar{\bvar{S}}) +\sum_{k\neq g}\bvar{S}^{(k)}\tilde{\Sigma}_k(\bvar{x}_i)\bvar{S}^{(k)}\right]\Delta_S^{-1}
\end{align*}
which concludes the proof. 
\end{proof}

Having demonstrated the asymptotic normality of this term, we now turn to showing the remaining residual terms converge to zero in probability. 
As these Lemmas follow directly from \textcite{OmniCLT} and the bounds stated in Appendix \ref{First Moment Appendix}, we state them without proof and refer the reader to \textcite{OmniCLT}. 

\begin{Lemma}\label{root n: Easy Residual}
Let $h = n(g-1) + i$. Then with the notation as given in the proof of Theorem \ref{CLT} we have the following convergence results.  
\begin{align}
    &\sqrt{n}[(\tilde{\bvar{A}} - \tilde{\bvar{P}})\bvar{U}_{\tilde{\bvar{P}}}(\tilde{\bvar{V}}\bvar{S}_{\tilde{\bvar{A}}}^{-1/2} - \bvar{S}_{\tilde{\bvar{P}}}^{-1/2}\tilde{\bvar{V}})]_h \overset{\prob}{\longrightarrow}\bvar{0}\label{root n: Commuter}\\
    &\sqrt{n}[\bvar{U}_{\tilde{\bvar{P}}}\bvar{U}_{\tilde{\bvar{P}}}^T(\tilde{\bvar{A}} - \tilde{\bvar{P}})\bvar{U}_{\tilde{\bvar{P}}}\tilde{\bvar{V}}\bvar{S}_{\tilde{\bvar{A}}}^{-1/2}]_h \overset{\prob}{\longrightarrow}\bvar{0}\label{root n: Hoeffding}\\
    &\sqrt{n}[\bvar{R}_1\bvar{S}_{\tilde{\bvar{A}}}^{1/2} + \bvar{U}_{\tilde{\bvar{P}}}\bvar{R}_2]_h  \overset{\prob}{\longrightarrow}\bvar{0}\label{root n: Rs}\\
    &\sqrt{n}[(\bvar{U}_{\tilde{\bvar{P}}}^{\perp}\bvar{S}_{\tilde{\bvar{P}}}^{\perp}\bvar{U}_{\tilde{\bvar{P}}}^{\perp})^T\bvar{U}_{\tilde{\bvar{A}}}\bvar{S}_{\tilde{\bvar{A}}}^{-1/2}]_h{\longrightarrow} \bvar{0}\label{root n: 2d eigenvectors}
\end{align}
\end{Lemma}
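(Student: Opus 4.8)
The plan is to derive each of the four convergences \eqref{root n: Commuter}--\eqref{root n: 2d eigenvectors} by upgrading the matrix-level bounds already established in Appendix~\ref{First Moment Appendix} (in the proof of Theorem~\ref{Main Bias Theorem} (B)) to bounds on the single $h$-th row, and then verifying that each such row bound, once multiplied by $\sqrt{n}$, still decays to zero. Since every first-moment bound holds with high probability and $\sum_n n^{-2} < \infty$, any high-probability estimate of the form $\sqrt{n}\,\|[\,\cdot\,]_h\|_2 \le C\,\mathrm{poly}(m,d)\,\log(nm)/\sqrt{n}$ immediately gives convergence to $\bvar{0}$ in probability. So the whole lemma reduces to producing, term by term, a row-wise bound that beats the $\sqrt{n}$ scaling; the final convergence in probability then follows exactly as invoked in the proof of Theorem~\ref{CLT}.

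For the two ``easy'' terms \eqref{root n: Commuter} and \eqref{root n: 2d eigenvectors} the existing matrix bounds already carry the needed decay, so crudely bounding the row by the Frobenius (or $2\to\infty$) norm suffices. The Frobenius bound on $(\tilde{\bvar{A}}-\tilde{\bvar{P}})\bvar{U}_{\tilde{\bvar{P}}}(\tilde{\bvar{V}}\bvar{S}_{\tilde{\bvar{A}}}^{-1/2}-\bvar{S}_{\tilde{\bvar{P}}}^{-1/2}\tilde{\bvar{V}})$ from Lemma~\ref{Commuter} and Lemma~\ref{Hoeffding} is $O(m^{3/4}d^{1/2}\log^{3/2}nm/n)$; bounding $\|[\,\cdot\,]_h\|_2$ by this and multiplying by $\sqrt{n}$ gives $O(m^{3/4}d^{1/2}\log^{3/2}nm/\sqrt{n})\to 0$. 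Likewise, \eqref{root n: 2d eigenvectors} is precisely the quantity $\tilde{\bvar{P}}^{-}\bvar{U}_{\tilde{\bvar{A}}}\bvar{S}_{\tilde{\bvar{A}}}^{-1/2}$ controlled in the $2\to\infty$ norm by Lemma~\ref{2d Eigenvectors Bound}, whose row bound $O(dm\log nm/n)$ again beats $\sqrt{n}$.

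The substantive work lies in \eqref{root n: Hoeffding} and in the two summands of \eqref{root n: Rs}, where the Frobenius bounds (from Lemma~\ref{Hoeffding} and Lemma~\ref{Centered-R-Bounds}) are only $O(\sqrt{\log nm/n})$ and $O(m^{3/2}\log nm/\sqrt{n})$; multiplying these by $\sqrt{n}$ does \emph{not} vanish, so the Frobenius norm is too crude and a genuine row-wise refinement is required. The key structural observation is that each of these terms factors on the left through $\bvar{U}_{\tilde{\bvar{P}}}$: writing $\bvar{R}_1 = \bvar{U}_{\tilde{\bvar{P}}}(\bvar{U}_{\tilde{\bvar{P}}}^T\bvar{U}_{\tilde{\bvar{A}}} - \tilde{\bvar{V}})$, all three of $\bvar{U}_{\tilde{\bvar{P}}}\bvar{U}_{\tilde{\bvar{P}}}^T(\tilde{\bvar{A}}-\tilde{\bvar{P}})\bvar{U}_{\tilde{\bvar{P}}}\tilde{\bvar{V}}\bvar{S}_{\tilde{\bvar{A}}}^{-1/2}$, $\bvar{R}_1\bvar{S}_{\tilde{\bvar{A}}}^{1/2}$, and $\bvar{U}_{\tilde{\bvar{P}}}\bvar{R}_2$ have $h$-th row equal to $(\bvar{U}_{\tilde{\bvar{P}}})_{h\cdot}$ times a matrix whose Frobenius or spectral norm is already controlled. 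The leading row obeys the incoherence bound $\|(\bvar{U}_{\tilde{\bvar{P}}})_{h\cdot}\|_2 \le \|\bvar{U}_{\tilde{\bvar{P}}}\|_{2,\infty} \le C(n\sqrt{m})^{-1/2}$, which follows from $\bvar{U}_{\tilde{\bvar{P}}} = \bvar{L}_S\tilde{\bvar{W}}\bvar{S}_{\tilde{\bvar{P}}}^{-1/2}$ in Lemma~\ref{Z form}, the boundedness of the rows of $\bvar{L}_S$, and the eigenvalue lower bound $\lambda_d(\tilde{\bvar{P}}) \ge C\delta\, n\sqrt{m}$ from Lemma~\ref{Eval Order}. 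Combining this gain with $\|\bvar{U}_{\tilde{\bvar{P}}}^T(\tilde{\bvar{A}}-\tilde{\bvar{P}})\bvar{U}_{\tilde{\bvar{P}}}\|_F \le Cd\sqrt{m\log nm}$ (Lemma~\ref{Hoeffding}), the operator-norm estimates $\|\bvar{S}_{\tilde{\bvar{A}}}^{1/2}\| \le C(n\sqrt{m})^{1/2}$ and $\|\bvar{S}_{\tilde{\bvar{A}}}^{-1/2}\| \le C(n\sqrt{m})^{-1/2}$, and the rotation bounds $\|\bvar{U}_{\tilde{\bvar{P}}}^T\bvar{U}_{\tilde{\bvar{A}}}-\tilde{\bvar{V}}\|_F \le Cdm\log nm/n$ (Lemma~\ref{Vtil}) and $\|\bvar{R}_2\|_F \le Cm^{3/4}\log nm/\sqrt{n}$ (Lemma~\ref{Commuter}), each scaled row norm collapses to $O(\mathrm{poly}(m,d)\log nm/\sqrt{n})\to 0$.

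The main obstacle is exactly this passage from the Frobenius to the $2\to\infty$ norm for \eqref{root n: Hoeffding} and \eqref{root n: Rs}: one must resist bounding the row by the full Frobenius norm and instead exploit that the offending factor sits to the \emph{right} of the incoherent matrix $\bvar{U}_{\tilde{\bvar{P}}}$, so that only its operator (or Frobenius) norm enters, while the decisive $(n\sqrt{m})^{-1/2}$ gain comes from the single row of $\bvar{U}_{\tilde{\bvar{P}}}$. Once all four row-wise bounds are assembled, each scaled residual row converges to $\bvar{0}$ in probability, which is the claim.
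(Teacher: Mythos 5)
Your proposal is correct and follows exactly the argument the paper intends: the paper states this lemma without proof and defers to \textcite{OmniCLT}, and your reconstruction matches that route, in particular correctly identifying that the Frobenius bounds from Lemmas~\ref{Hoeffding} and \ref{Centered-R-Bounds} are too crude for \eqref{root n: Hoeffding} and \eqref{root n: Rs} and that one must instead peel off the single row of $\bvar{U}_{\tilde{\bvar{P}}}$ via the incoherence bound $\|\bvar{U}_{\tilde{\bvar{P}}}\|_{2,\infty}\leq C(n\sqrt{m})^{-1/2}$ coming from Lemma~\ref{Z form} and the eigenvalue lower bound of Lemma~\ref{Eval Order}. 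The only (immaterial, since $m$ is fixed) quibble is that the upper bound on $\|\bvar{S}_{\tilde{\bvar{A}}}^{1/2}\|$ is $O((nm)^{1/2})$ rather than $O((n\sqrt{m})^{1/2})$, which changes nothing in the conclusion.
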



\begin{Lemma}\label{root n: Exchangeable}
With the notation as used in the proof of Theorem \ref{CLT} we have the following convergence
\begin{align*}
    \sqrt{n}[(\bvar{I} - \bvar{U}_{\tilde{\bvar{P}}}\bvar{U}_{\tilde{\bvar{P}}}^T)(\tilde{\bvar{A}} - \tilde{\bvar{P}})\bvar{R}_3\bvar{S}_{\tilde{\bvar{A}}}^{-1/2}]_h \overset{\prob}{\longrightarrow}\bvar{0}
\end{align*}
\end{Lemma}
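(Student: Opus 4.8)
The plan is to recover, through an exchangeability argument, the extra factor of $\sqrt{nm}$ that a direct row-wise bound would otherwise lose. Lemma~\ref{Centered-R-Bounds} already controls the full Frobenius norm, $\|(\bvar{I} - \bvar{U}_{\tilde{\bvar{P}}}\bvar{U}_{\tilde{\bvar{P}}}^T)(\tilde{\bvar{A}} - \tilde{\bvar{P}})\bvar{R}_3\bvar{S}_{\tilde{\bvar{A}}}^{-1/2}\|_F \le Cm^{5/4}n^{-1/2}\log nm$ with high probability, but applying the trivial inequality $\|\bvar{M}_h\|_2 \le \|\bvar{M}\|_F$ to a single row and then scaling by $\sqrt{n}$ yields $O(m^{5/4}\log nm)$, which does not vanish. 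The key observation is that the rows within each block are probabilistically interchangeable, so a typical row carries only a $(nm)^{-1/2}$ fraction of the Frobenius mass; making this precise is the entire content of the lemma.

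First I would establish the exchangeability. Write $\bvar{M} = (\bvar{I} - \bvar{U}_{\tilde{\bvar{P}}}\bvar{U}_{\tilde{\bvar{P}}}^T)(\tilde{\bvar{A}} - \tilde{\bvar{P}})\bvar{R}_3\bvar{S}_{\tilde{\bvar{A}}}^{-1/2}$, where $\bvar{R}_3 = \bvar{U}_{\tilde{\bvar{A}}} - \bvar{U}_{\tilde{\bvar{P}}}\tilde{\bvar{V}}$ by the definitions of $\bvar{R}_1,\bvar{R}_3$ in Lemma~\ref{Centered-R-Bounds}. I would then introduce a vertex permutation $\sigma$ acting simultaneously on every block through the orthogonal matrix $\bvar{Q}_\sigma = \bvar{I}_m\otimes\bvar{P}_\sigma$. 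Since $\bvar{X}_1,\dots,\bvar{X}_n$ are i.i.d.\ from $F$, the rows of $\bvar{X}$ are exchangeable, so $(\tilde{\bvar{A}},\tilde{\bvar{P}})$ and $(\bvar{Q}_\sigma\tilde{\bvar{A}}\bvar{Q}_\sigma^T, \bvar{Q}_\sigma\tilde{\bvar{P}}\bvar{Q}_\sigma^T)$ are equal in distribution. Because the spectral projector $\bvar{U}_{\tilde{\bvar{P}}}\bvar{U}_{\tilde{\bvar{P}}}^T$ is invariant to the eigenvector sign and rotation ambiguity, the Procrustes alignment $\tilde{\bvar{V}}$ absorbs the remaining ambiguity so that $\bvar{R}_3 \mapsto \bvar{Q}_\sigma\bvar{R}_3$, and $\bvar{S}_{\tilde{\bvar{A}}}$ (a function of eigenvalues only) is unchanged, one checks that $\bvar{M} \mapsto \bvar{Q}_\sigma\bvar{M}$. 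Hence $\bvar{M} \overset{d}{=} \bvar{Q}_\sigma\bvar{M}$, which merely permutes the rows $n(g-1)+i$ within block $g$, and therefore the row-norm vector $(\|\bvar{M}_{n(g-1)+1}\|_2,\dots,\|\bvar{M}_{n(g-1)+n}\|_2)$ is exchangeable for each fixed $g$.

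With exchangeability in hand, I would combine it with the Frobenius bound by a counting argument. On the high-probability event $G = \{\|\bvar{M}\|_F^2 \le B\}$ with $B = C^2 m^{5/2}n^{-1}(\log nm)^2$, at most $B/t^2$ of the $n$ rows in block $g$ can have $\ell_2$ norm exceeding $t$; taking expectations and using exchangeability together with the permutation-invariance of $G$ gives $n\,\prob(\|\bvar{M}_h\|_2 > t,\,G) \le B/t^2$, so $\prob(\|\bvar{M}_h\|_2 > t) \le B/(nt^2) + \prob(G^c)$. Choosing $t = \epsilon/\sqrt{n}$ yields $\prob(\sqrt{n}\|\bvar{M}_h\|_2 > \epsilon) \le C^2 m^{5/2}(\log nm)^2/(n\epsilon^2) + \prob(G^c) \to 0$ for fixed $m$, which is exactly the claim $\sqrt{n}\bvar{M}_h \overset{\prob}{\longrightarrow}\bvar{0}$. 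The identical argument runs conditionally on $\{\bvar{X}_i = \bvar{x}_i\}$ by exchanging only the remaining $n-1$ vertices, which is the form needed in the proof of Theorem~\ref{CLT}.

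I expect the main obstacle to be the equivariance verification in the second step: one must track how $\bvar{U}_{\tilde{\bvar{A}}}$, $\bvar{U}_{\tilde{\bvar{P}}}$, and the alignment $\tilde{\bvar{V}}$ transform under $\bvar{Q}_\sigma$ and confirm that all sign- and rotation-ambiguities cancel, so that $\bvar{M}$ is genuinely equivariant rather than equal in law only up to an unknown right rotation. A right rotation would in fact still suffice, since row $\ell_2$ norms are invariant to right orthogonal multiplication, but ruling it out keeps the argument clean. The conceptual crux is recognizing that the naive Frobenius-to-row passage is too lossy and that exchangeability is the correct device for recovering the missing $\sqrt{n}$ factor.
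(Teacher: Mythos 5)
Your proposal is correct and is essentially the argument the paper intends: the lemma is stated without proof and deferred to \textcite{OmniCLT}, where the same device is used — combine the Frobenius bound of Lemma~\ref{Centered-R-Bounds} with exchangeability of the rows (note the label ``root n: Exchangeable'') to show a typical row carries only an $O((nm)^{-1/2})$ share of the Frobenius mass, so that the $\sqrt{n}$-scaled row vanishes in probability. Your observations that right-orthogonal ambiguity is harmless for row $\ell_2$ norms and that the argument runs conditionally on $\{\bvar{X}_i=\bvar{x}_i\}$ by permuting the remaining $n-1$ vertices are exactly the points that need care, and you have handled them correctly.
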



\section{Corollaries and Statistical Consequences}\label{Corollaries and Statistical Consequences Appendix}

Included below are proofs of the Corollaries utilizing the asymptotic joint distribution of omnibus node embeddings. 
These proofs largely follow direction from Theorem \ref{Main Bias Theorem} and \ref{CLT}. 
We derive the asymptotic covariances of each set of rows explicitly as it serves as a format for the development of further estimators that utilize the rows of $\hat{\bvar{L}}$.

\begin{proof}[Proof of Corollaries 3.4, 3.5]
Define the vector $\bvar{R}_{r_k} = (\hat{\bvar{L}}\tilde{\bvar{W}}_n - \bvar{L}_S)_{r_k}\in\R^d$ for a finite collection of rows indexed by $\{r_k\}_{k=1}^K\subset[nm]$. 
Consider the vector 
\begin{align*}
    \bvar{V} = [\bvar{R}_{r_1}^T|\bvar{R}_{r_2}^T|\dots |\bvar{R}_{r_K}^T]^T \in \R^{Kd\times 1}
\end{align*}
Utilizing the decomposition in the proof of Theorem \ref{CLT} each $\bvar{R}_{r_k}$ can be written as 
\begin{align*}
 \sqrt{n}\bvar{R}_{r_k} =   \sqrt{n}\bvar{H}_{r_k} + \sqrt{n}\bvar{M}_{r_k} + \sqrt{n}\bvar{R}_{r_k}^{(2)}\tilde{\bvar{W}}_n
\end{align*} 
so we can write 
\begin{align*}
    \sqrt{n}\bvar{V} = \sqrt{n}\begin{bmatrix}
    \bvar{H}_{r_1}\\
    \vdots\\
    \bvar{H}_{r_K}
    \end{bmatrix} + 
    \sqrt{n}\begin{bmatrix}
    \bvar{M}_{r_1}\\
    \vdots\\
    \bvar{M}_{r_K}
    \end{bmatrix} + 
    \sqrt{n}\begin{bmatrix}
    \bvar{R}^{(2)}_{r_1}\\
    \vdots\\
    \bvar{R}^{(2)}_{r_K}
    \end{bmatrix}\tilde{\bvar{W}}_n.
\end{align*}
This final term converges to zero as  $\sqrt{n}\bvar{R}_{r_k}^{(2)} \overset{\prob}{\longrightarrow}\bvar{0}$ for each $k\in[K]$.
Conditional on $\{\bvar{X}_{r_k} = \bvar{x}_{r_k}\}_{k=1}^K$, Lemma~\ref{Lemma:mu-distribution} and Lemma~\ref{Power Method CLT} establish that each of the vectors above converge to a multivariate normal distribution.
That is, conditional on $\{\bvar{X}_{r_k} = \bvar{x}_{r_k}\}_{k=1}^K$ 
\begin{align*}
    \sqrt{n}\begin{bmatrix}
    \bvar{H}_{r_1}\\
    \vdots\\
    \bvar{H}_{r_K}
    \end{bmatrix} + 
    \sqrt{n}\begin{bmatrix}
    \bvar{M}_{r_1}\\
    \vdots\\
    \bvar{M}_{r_K}
    \end{bmatrix} + 
    \sqrt{n}\begin{bmatrix}
    \bvar{R}^{(2)}_{r_1}\\
    \vdots\\
    \bvar{R}^{(2)}_{r_K}
    \end{bmatrix}\tilde{\bvar{W}}_n \overset{D}{\longrightarrow} N(\bvar{0}, \Sigma(\bvar{x}_{r_1}, \ldots, \bvar{x}_{r_K}))
\end{align*}
where the covariance $\Sigma(\bvar{x}_{r_1}, \ldots, \bvar{x}_{r_K})\in\R^{Kd\times Kd}$ can be decomposed as
\begin{align*}
\Sigma(\bvar{x}_{r_1}, \ldots, \bvar{x}_{r_K}) = \Sigma^{(N)}(\bvar{x}_{r_1}, \ldots, \bvar{x}_{r_K}) + \Sigma^{(M)}(\bvar{x}_{r_1}, \ldots, \bvar{x}_{r_K}) + \Sigma^{(N,M)}(\bvar{x}_{r_1}, \ldots, \bvar{x}_{r_K})
\end{align*}
Letting $r_k = n(g_k -1) + i_k$ for some $g_k\in[m]$ and $i_k\in[n]$, each $\Sigma^{(N)}, \Sigma^{(M)}, \Sigma^{(N,M)}$ is block diagonal with blocks $\Sigma_{g_k}^{(N)}(\bvar{x}_{r_k})$, $\Sigma_{g_k}^{(M)}(\bvar{x}_{r_k})$, and $\Sigma_{g_k}^{(N,M)}(\bvar{x}_{r_k})$, respectively.
Therefore, it suffices to specify the off diagonal blocks of $\Sigma^{(H)}, \Sigma^{(H)}, \Sigma^{(H,M)}$ whence integrating over possible values of $\bvar{x}_{r_k}$ will yield the result. 

We begin with specifying the off diagonal blocks of $\Sigma^{(N)}(\bvar{x}_{r_1}, \ldots, \bvar{x}_{r_K})$.
To simplify notation, we consider $K=2$ rows and note that this argument can be extended to the $K>2$ setting directly.
First recall we can $\sqrt{n}\bvar{H}_{r_k}$ as
\begin{align*}
(n\bvar{W}_n^{*}\bvar{S}_{\tilde{\bvar{P}}}^{-1}\bvar{W}_n^{*T})[\bvar{I}_{d\times d}, \bvar{0}_{d\times q}]\bvar{W}_n\bvar{Q}\bvar{I}_{d,q}
\left[\frac{1}{\sqrt{n}}(\tilde{\bvar{A}} - \tilde{\bvar{P}})(\bvar{I}\otimes \bvar{X})[\bvar{S}|\bvar{S}_{\perp}]\right]_{r_k}
\end{align*}
This first term converges to $\Delta_S^{-1}[\bvar{I}_{d\times d}, \bvar{0}_{d\times q}]$
for each $r_k$ we can use the multivariate Slutsky theorem as well as the distribution of the remaining term to establish the result. 

Recall we denote $r_1 = n(g_1 - 1) + i_1$ and $r_2 = n(g_2 - 1) + i_2$
There are four possible scenarios we need to consider depending on the pairs $(g_1, g_2)$ and $(i_1, i_2)$.
If $g_1 = g_2$ and $i_1 = i_2$ then $\hat{\bvar{L}}_{r_1} = \hat{\bvar{L}}_{r_2}$ and the off block diagonals are simply given by $\Sigma_{g_1}^{(N)}(\bvar{x}_{r_1})$.
Next, suppose $g_1 \neq g_2$ and $i_1 = i_2$.
Then, letting $\bvar{K}^{(k)} = [\bvar{S}^{(k)}|\bvar{S}_{\perp}^{(k)}]$ then what remains is the term
\begin{align*}
     \frac{1}{\sqrt{n}}\sum_{j = 1}^n\frac{1}{2}\left\{\left(\bvar{K}^{(g)} + m\bar{\bvar{K}}\right)^T(\bvar{A}_{i_1j}^{(g_1)} - \bvar{x}_{i_1}^T\bvar{C}^{(g_1)}\bvar{X}_j)\bvar{X}_j + \sum_{\ell\neq g}\bvar{K}^{(\ell)T}(\bvar{A}_{i_1j}^{(\ell)} - \bvar{x}_{i_1}^T\bvar{C}^{(\ell)}\bvar{X}_j)\bvar{X}_j\right\}
\end{align*}
The covariance for this term is given by 
\begin{align*}
    &\E\Big[\text{Cov}\Big(\frac{1}{2}\left\{\left(\bvar{K}^{(g_1)} + m\bar{\bvar{K}}\right)^T(\bvar{A}_{i_1j}^{(g_1)} - \bvar{x}_{i_1}^T\bvar{C}^{(g_1)}\bvar{X}_j)\bvar{X}_j + \sum_{\ell\neq g_1}\bvar{K}^{(\ell)T}(\bvar{A}_{i_1j}^{(\ell)} - \bvar{x}_{i_1}^T\bvar{C}^{(\ell)}\bvar{X}_j)\bvar{X}_j\right\}, \\
    &\frac{1}{2}\left\{\left(\bvar{K}^{(g_2)} + m\bar{\bvar{K}}\right)^T(\bvar{A}_{i_1j}^{(g_2)} - \bvar{x}_{i_1}^T\bvar{C}^{(g_2)}\bvar{X}_j)\bvar{X}_j + \sum_{\ell\neq g_2}\bvar{K}^{(\ell)T}(\bvar{A}_{i_1j}^{(\ell)} - \bvar{x}_{i_1}^T\bvar{C}^{(\ell)}\bvar{X}_j)\bvar{X}_j\right\}\Big|\bvar{X}_j\Big)\Big]\\
    &=\frac{1}{4}\E\left[(\bvar{K}^{(g_1)} + m\bar{\bvar{K}})\bvar{X}_j\text{Cov}(\bvar{A}_{i_1j}^{(g_1)}, \bvar{A}_{i_1j}^{(g_2)}|\bvar{X}_j)\bvar{X}_j^T(\bvar{K}^{(g_2)} + m\bar{\bvar{K}})\right]\\
    &+\frac{1}{4}\E\left[(\bvar{K}^{(g)} + m\bar{\bvar{K}})^T\bvar{X}_j\sum_{\ell \neq g_2}\text{Cov}(\bvar{A}_{i_1j}^{(g_1)},\bvar{A}_{i_1j}^{(\ell)}|\bvar{X}_j)\bvar{X}_j^T\bvar{K}^{(\ell)}\right]\\
    &+\frac{1}{4}\E\left[\sum_{\ell\neq g_1}\bvar{K}^{(\ell)T}\bvar{X}_j\text{Cov}(\bvar{A}_{i_1j}^{(\ell)}, \bvar{A}_{i_1j}^{(g_2)}|\bvar{X}_j)\bvar{X}_j^T(m\bar{\bvar{K}} + \bvar{K}^{(g_2)})\right]\\
    &+\frac{1}{4}\sum_{\ell\neq g_1}\sum_{\ell'\neq g_2}\E\left[\bvar{K}^{(\ell)T}\bvar{X}_j\text{Cov}(\bvar{A}_{i_1j}^{(\ell)}, \bvar{A}_{i_1j}^{(\ell')}|\bvar{X}_j)\bvar{X}_j^T\bvar{K}^{(\ell')}\right]\\
    &= \frac{1}{4}(\bvar{K}^{(g_1)} + m\bar{\bvar{K}})^T\tilde{\Sigma}_{g_1}(\bvar{x}_{i_1})\bvar{K}^{(g_1)} + \bvar{K}^{(g_2T)}\tilde{\Sigma}_{g_2}(\bvar{x}_{i_1})(m\bar{\bvar{K}} + \bvar{K}^{(g_2)})+ \frac{1}{4}\sum_{\ell \neq g_1,g_2}\bvar{K}^{(\ell)T}\tilde{\Sigma}_{\ell}(\bvar{x}_{i_1})\bvar{K}^{(\ell)} 
\end{align*}
Therefore, we see the rows of $\hat{\bvar{L}}$ corresponding to the same vertex $i_1 = i_2$ are correlated with the above covariance pre and post multiplied by $\Delta_S^{-1}[\bvar{I}_{d\times d}, \bvar{0}_{d\times q}]$.
Therefore, the covariance between rows of $\bvar{H}$ corresponding to the same vertex can be written
\begin{align*}
    \Sigma^{(R)}_{g_1g_2}(\bvar{x}_{i_1}) =  \frac{1}{4}\Delta_S^{-1}\Big((\bvar{S}^{(g_1)} &+ m\bar{\bvar{S}})\tilde{\Sigma}_{g_1}(\bvar{x}_{i_1})\bvar{S}^{(g_1)} + \bvar{S}^{(g_2)}\tilde{\Sigma}_{g_2}(\bvar{x}_{i_1})(m\bar{\bvar{S}} + \bvar{S}^{(g_2)})\\
    &+ \sum_{\ell \neq g_1,g_2}\bvar{S}^{(\ell)}\tilde{\Sigma}_{\ell}(\bvar{x}_{i_1})\bvar{S}^{(\ell)} \Big)\Delta_S^{-1}
\end{align*}
Next, consider the setting where $i_1\neq i_2$. 
Then we look to calculate the covariance 
\begin{align*}
    &\E\Big[\text{Cov}\Big(\frac{1}{2}\left\{\left(\bvar{K}^{(g_1)} + m\bar{\bvar{K}}\right)^T(\bvar{A}_{i_1j}^{(g_1)} - \bvar{x}_{i_1}^T\bvar{C}^{(g_1)}\bvar{X}_j)\bvar{X}_j + \sum_{\ell\neq g_1}\bvar{K}^{(\ell)^T}(\bvar{A}_{i_1j}^{(\ell)} - \bvar{x}_{i_1}^T\bvar{C}^{(\ell)}\bvar{X}_j)\bvar{X}_j\right\}, \\
    &\frac{1}{2}\left\{\left(\bvar{K}^{(g_2)} + m\bar{\bvar{K}}\right)^T(\bvar{A}_{i_2j}^{(g)} - \bvar{x}_{i_2}^T\bvar{C}^{(g_2)}\bvar{X}_j)\bvar{X}_j + \sum_{\ell\neq g_2}\bvar{K}^{(\ell)T}(\bvar{A}_{i_2j}^{(\ell)} - \bvar{x}_{i_2}^T\bvar{C}^{(\ell)}\bvar{X}_j)\bvar{X}_j\right\}\Big|\bvar{X}_j\Big)\Big]\\
    &= \frac{1}{4}\E\left[(\bvar{K}^{(g_1)} + m\bar{\bvar{K}})^T\text{Cov}(\bvar{A}_{i_1j}^{(g_1)}, \bvar{A}_{i_2j}^{(g_2)}|\bvar{X}_j)\bvar{X}_j^T(\bvar{K}^{(g_2)} + m\bar{\bvar{K}})\right]\\
    &+\frac{1}{4}\E\left[(\bvar{K}^{(g_1)} + m\bar{\bvar{K}})^T\bvar{X}_j\sum_{\ell \neq g_1}\text{Cov}(\bvar{A}_{i_1j}^{(g_1)},\bvar{A}_{i_2j}^{(\ell)}|\bvar{X}_j)\bvar{X}_j^T\bvar{K}^{(\ell)}\right]\\
    &+\frac{1}{4}\E\left[\sum_{\ell\neq g_2}\bvar{K}^{(\ell)T}\bvar{X}_j\text{Cov}(\bvar{A}_{i_1j}^{(\ell)}, \bvar{A}_{i_2j}^{(g_2)}|\bvar{X}_j)\bvar{X}_j^T(m\bar{\bvar{K}} + \bvar{K}^{(g_2)})\right]\\
    &+\frac{1}{4}\sum_{\ell\neq g_1}\sum_{\ell'\neq g_2}\E\left[\bvar{K}^{(\ell)T}\bvar{X}_j\text{Cov}(\bvar{A}_{i_1j}^{(\ell)}, \bvar{A}_{i_2j}^{(\ell')}|\bvar{X}_j)\bvar{X}_j^T\bvar{K}^{(\ell')}\right]\\
    &=0
\end{align*}
Therefore, we see that rows of $\bvar{H}$ that correspond to different vertices are asymptotically independent. 
Using the expressions given above, we can now define $\Omega_{gk}^{(N)}(\bvar{y})$ and $\Psi_{gk}^{(N)}(\bvar{y}_1, \bvar{y}_2)$
\begin{align*}
    \Omega_{gk}^{(N)}(\bvar{y}) = \begin{bmatrix}
    \Sigma_g^{(N)}(\bvar{y}) & \Sigma_{gk}^{(N)}(\bvar{y})\\
    \Sigma_{kg}^{(N)}(\bvar{y}) & \Sigma_k^{(N)}(\bvar{y})\\
    \end{bmatrix} \quad \quad 
    \Psi_{gk}^{(N)}(\bvar{y}_1, \bvar{y}_2) = \begin{bmatrix}
    \Sigma_g^{(N)}(\bvar{y}_1) & \bvar{0}\\
    \bvar{0} & \Sigma_k^{(N)}(\bvar{y}_2)\\
    \end{bmatrix}
\end{align*}

Moving to the $\bvar{M}$ term, consider the $r_k$ row 
\begin{align*}
    \sqrt{n}\bvar{M}_{r_k} = 
    [\bvar{I}_{d\times d}, \bvar{0}_{d\times q}][\sqrt{n}\left(\bvar{W}_n\bvar{Q} - \bvar{I}\right)]\bvar{I}_{d,q}[(\bvar{I}\otimes \bvar{X})[\bvar{S}|\bvar{S}_{\perp}]]_{r_k}
\end{align*}
Let $\bvar{M}_{\bvar{Q}} = [\bvar{I}_{d\times d}, \bvar{0}_{d\times q}][\sqrt{n}\left(\bvar{W}_n\bvar{Q} - \bvar{I}\right)]\bvar{I}_{d,q}$.
Then Lemma~\ref{Lemma:mu-distribution} establishes $\bvar{M}_{\bvar{Q}}$ has normally distributed entries with a degenerate covariance structure. 
As $\E[\bvar{M}_\bvar{Q}] = 0$, for any $g_1, g_2\in [m]$ and $i_1, i_2\in[n]$, we can write the covariance as
\begin{align*}
\Sigma^{(M)}_{g_1g_2}(\bvar{x}_{i_1}, \bvar{x}_{i_2}) &= \E\left[\bvar{M}_{\bvar{Q}}[\bvar{S}^{(g_1)}|\bvar{S}_{\perp}^{(g_1)}]^T\bvar{x}_{i_1}\bvar{x}_{i_2}^T[\bvar{S}^{(g_2)}|\bvar{S}_{\perp}^{(g_2)}]\bvar{M}_{\bvar{Q}}^T\right]\\
\end{align*}
which is non-zero for any combination of $g_1, g_2, i_1, i_2$.
Using the expressions given above, we can now define $\Omega_{gk}^{(M)}(\bvar{y})$ and $\Psi_{gk}^{(M)}(\bvar{y}_1, \bvar{y}_2)$
\begin{align*}
    \Omega_{gk}^{(M)}(\bvar{y}) = \begin{bmatrix}
    \Sigma_{gg}^{(M)}(\bvar{y}, \bvar{y}) & \Sigma_{gk}^{(M)}(\bvar{y}, \bvar{y})\\
    \Sigma_{kg}^{(M)}(\bvar{y}, \bvar{y}) & \Sigma_{kk}^{(M)}(\bvar{y}, \bvar{y})\\
    \end{bmatrix} \quad \quad 
    \Psi_{gk}^{(M)}(\bvar{y}_1, \bvar{y}_2) = \begin{bmatrix}
    \Sigma_{gg}^{(M)}(\bvar{y}_1, \bvar{y}_2) & \Sigma_{gk}^{(M)}(\bvar{y}_1, \bvar{y}_2)\\
    \Sigma_{kg}^{(M)}(\bvar{y}_2, \bvar{y}_1) & \Sigma_{kk}^{(M)}(\bvar{y}_2, \bvar{y}_2)\\
    \end{bmatrix}.
\end{align*}
Finally, let $\Sigma_{g_1g_2}^{(R,M)}(\bvar{x}_{i_1}, \bvar{x}_{i_2}) = \E[\bvar{H}_{r_1}\bvar{M}_{r_2}^T +  \bvar{M}_{r_2}\bvar{H}_{r_1}^T]$ and define 
\begin{align*}
    \Omega_{gk}^{(N,M)}(\bvar{y}) = \begin{bmatrix}
    \Sigma_{gg}^{(N,M)}(\bvar{y}, \bvar{y}) & \Sigma_{gk}^{(N,M)}(\bvar{y}, \bvar{y})\\
    \Sigma_{kg}^{(N,M)}(\bvar{y}, \bvar{y}) & \Sigma_{kk}^{(N,M)}(\bvar{y}, \bvar{y})\\
    \end{bmatrix} \quad \quad 
    \Psi_{gk}^{(N,M)}(\bvar{y}_1, \bvar{y}_2) = \begin{bmatrix}
    \Sigma_{gg}^{(N,M)}(\bvar{y}_1, \bvar{y}_2) & \Sigma_{gk}^{(N,M)}(\bvar{y}_1, \bvar{y}_2)\\
    \Sigma_{kg}^{(N,M)}(\bvar{y}_2, \bvar{y}_1) & \Sigma_{kk}^{(N,M)}(\bvar{y}_2, \bvar{y}_2)\\
    \end{bmatrix}.
\end{align*}
Having specified the off diagonal covariance structure of $\bvar{V}$, for any combination $(i_1, i_2, g_1, g_2)$ this concludes the proof. 
\end{proof}

\begin{proof}[Proof of Corollary 4.1]
Recall from the definition of $\bvar{S}^{(g)}$ given in Theorem~\ref{Main Bias Theorem}, we can write $\bar{\bvar{S}}$ as 
\begin{align*}
    \bar{\bvar{S}} = \frac{1}{m}\sum_{g=1}^m\left(\frac{\bvar{C}^{(g)}\bvar{C}_m^{-1/4} + \bvar{C}_m^{1/4}}{2}\right) = \frac{\bar{\bvar{C}}\bvar{C}_{m}^{-1/4} + \bvar{C}_{m}^{1/4}}{2} = \sqrt{\bvar{\bar{C}}}\left(\frac{\bar{\bvar{C}}^{1/2}\bvar{C}_{m}^{-1/4} + \bar{\bvar{C}}^{-1/2}\bvar{C}_{m}^{1/4}}{2}\right).
\end{align*}
As $\bar{\bvar{C}}$ and $\bvar{C}_m$ are diagonal, they commute and we may write
\begin{align*}
\|\bar{\bvar{S}}(\bvar{x}_k - \bvar{x}_{\ell})\| \leq \|2^{-1}(\bar{\bvar{C}}^{1/2}\bvar{C}_{m}^{-1/4} + \bar{\bvar{C}}^{-1/2}\bvar{C}_{m}^{1/4})\| \|\sqrt{\bar{\bvar{C}}}(\bvar{x}_k - \bvar{x}_{\ell})\|.
\end{align*}
$\bar{\bvar{C}}$ is full rank so $\|\sqrt{\bar{\bvar{C}}}(\bvar{x}_k - \bvar{x}_{\ell})\| \neq 0$ and we can write 
\begin{align*}
    \frac{\|\bar{\bvar{S}}(\bvar{x}_k - \bvar{x}_{\ell})\|}{\|\sqrt{\bar{\bvar{C}}}(\bvar{x}_k - \bvar{x}_{\ell})\|} \leq \|2^{-1}(\bar{\bvar{C}}^{1/2}\bvar{C}_{m}^{-1/4} + \bar{\bvar{C}}^{-1/2}\bvar{C}_{m}^{1/4})\|.
\end{align*}
Let $\bvar{v}_i = (\bvar{C}_{ii}^{(1)}, \bvar{C}_{ii}^{(2)}, \ldots, \bvar{C}_{ii}^{(m)})^T$ and notice the $(i,i)$-th element of $\bar{\bvar{C}}^{1/2}\bvar{C}_m^{-1/4}$ can be written as 
\begin{align*}
    (\bar{\bvar{C}}^{-1/2}\bvar{C}_m^{-1/4})_{ii} = \left(\frac{1}{m}\sum_{g=1}^m\bvar{C}^{(g)}_{ii}\right)^{1/2}\left(\frac{1}{m}\sum_{g=1}^m\bvar{C}^{(g)2}_{ii}\right)^{-1/4}
    = \left(\frac{\|\bvar{v}_{i}\|_1}{m}\right)^{1/2}\left(\frac{\|\bvar{v}_{i}\|_2^2}{m}\right)^{-1/4}
    = \sqrt{\frac{\|\bvar{v}_i\|_1}{\sqrt{m}\|\bvar{v}_i\|_2}}.
\end{align*}
With this and the norm ordering $\|\bvar{v}_i\|_2 \leq \|\bvar{v}_i\|_1\leq \sqrt{m}\|\bvar{v}_i\|_2$ we have
\begin{align*}
    \|2^{-1}(\bar{\bvar{C}}^{1/2}\bvar{C}_{m}^{-1/4} + \bar{\bvar{C}}^{-1/2}\bvar{C}_{m}^{-1/4})\| &= \max_{i\in[d]}\frac{1}{2}\left(\sqrt{\frac{\|\bvar{v}_i\|_1}{\sqrt{m}\|\bvar{v}_i\|_2}} + \sqrt{\frac{\sqrt{m}\|\bvar{v}_i\|_2}{\|\bvar{v}_i\|_1}}\right)
\end{align*}
For the lower bound, first notice 
\begin{align*}
    \|\sqrt{\bar{\bvar{C}}}(\bvar{x}_k - \bvar{x}_{\ell})\| \leq \|(2^{-1}(\bar{\bvar{C}}^{1/2}\bvar{C}_{m}^{-1/4} + \bar{\bvar{C}}^{-1/2}\bvar{C}_{m}^{1/4}))^{-1}\| \|\bar{\bvar{S}}(\bvar{x}_k - \bvar{x}_{\ell})\|
\end{align*}
and by rearranging gives gives 
\begin{align*}
    \|(2^{-1}(\bar{\bvar{C}}^{1/2}\bvar{C}_{m}^{-1/4} + \bar{\bvar{C}}^{-1/2}\bvar{C}_{m}^{1/4}))^{-1}\|^{-1} \leq \frac{\|\bar{\bvar{S}}(\bvar{x}_k - \bvar{x}_{\ell})\|}{\|\sqrt{\bar{\bvar{C}}}(\bvar{x}_k - \bvar{x}_{\ell})\|} 
\end{align*}
Recall for an invertible matrix $\|\bvar{M}^{-1}\| = 1/\lambda_{\min}(\bvar{M})$ and hence $\|\bvar{M}^{-1}\|^{-1} = \lambda_{\min}(\bvar{M})$.
Therefore, 
\begin{align*}
    \|(2^{-1}(\bar{\bvar{C}}^{1/2}\bvar{C}_{m}^{-1/4} + \bar{\bvar{C}}^{-1/2}\bvar{C}_{m}^{1/4}))^{-1}\|^{-1} = \min_{i\in[d]}\frac{1}{2}\left(\sqrt{\frac{\|\bvar{v}_i\|_1}{\sqrt{m}\|\bvar{v}_i\|_2}} + \sqrt{\frac{\sqrt{m}\|\bvar{v}_i\|_2}{\|\bvar{v}_i\|_1}}\right). 
\end{align*}
Let $f(x) = 2^{-1}(x^{1/2} + x^{-1/2})$ and notice $f(x)\geq 1$ for $x\geq 0$.
With this notation, we can rewrite the bounds.
Notice as $\|\bvar{v}_i\|_1 / \sqrt{m}\|\bvar{v}_i\|_2 \geq 0$, then we can write the lower bound 
\begin{align*}
    \min_{i\in[d]}f(\|\bvar{v}_i\|_1 / \sqrt{m}\|\bvar{v}_i\|_2) \leq \frac{\|\bar{\bvar{S}}(\bvar{x}_k - \bvar{x}_{\ell})\|}{\|\sqrt{\bar{\bvar{C}}}(\bvar{x}_k - \bvar{x}_{\ell})\|} \leq \max_{i\in[d]}f(\|\bvar{v}_i\|_1 / \sqrt{m}\|\bvar{v}_i\|_2).
\end{align*}
As $f(x)\geq 1$, the lower bound it achieved. 
For the upper bound, note $f(x)$ is monotonic decreasing for $0\leq x \leq 1$ and monotonic increasing for $1\leq x$.
However note the argument $\|\bvar{v}_i\|_1/\sqrt{m}\|\bvar{v}_2\|_2 \leq 1$.
Therefore, minimizing the argument is equivalent to maximizing $f(x)$.
Therefore, as $\|\bvar{v}_i\|_1/\sqrt{m}\|\bvar{v}_2\|_2 \geq m^{-1/2}$, the maximum value is achieved at $f(m^{-1/2}) = 2^{-1}(m^{-1/4} + m^{1/4})$ concluding the proof.
\end{proof}

\begin{proof}[Proof of Corollary 4.2]
Conditional on a vertices community under the RDPG is equivalent with conditioning on a community's latent vector. 
Conditional on this event, the event $\{\bvar{X}_i = \bvar{x}_i\}$, the distribution of $\bvar{X}_i$ reduces to a point mass over $\bvar{x}_i$.
Therefore, when integrating in the final step of the proof of Theorem \ref{CLT} is equivalent to evaluating the normal cumulative distribution at $\bvar{x}_i$.
This is the statement given in Corollary 4.2.
\end{proof}

\begin{proof}[Proof of Theorem 4.5]
Theorem \ref{Main Bias Theorem} and Theorem \ref{CLT} establish the asymptotic distribution of the rows of $\hat{\bvar{L}}$
\begin{align*}
    (\hat{\bvar{L}}\tilde{\bvar{W}}_n)_h &= \bvar{S}^{(g)}\bvar{X}_i + \bvar{H}_h + \bvar{M}_h + \tilde{\bvar{W}}_n^T\bvar{R}^{(r)}_h
\end{align*}
Express the statistic $\hat{\bvar{D}}_i = (\hat{\bvar{X}}^{(1)})_i - (\hat{\bvar{X}}^{(2)})_{n+i}$ as
\begin{align*}
    (\hat{\bvar{D}}\tilde{\bvar{W}}_n)_i &= (\bvar{S}^{(1)} - \bvar{S}^{(2)})\bvar{X}_i + [(\bvar{H}_i - \bvar{H}_{n+i}) + (\bvar{M}_i - \bvar{M}_{n+i})] +  \tilde{\bvar{W}}_n^T(\bvar{R}^{(2)}_i - \bvar{R}^{(2)}_{n+i}).
\end{align*}
Corollary 4.4 establishes that the asymptotic distribution of $\sqrt{n}[\hat{\bvar{D}}\tilde{\bvar{W}}_n - \bvar{X}(\bvar{S}^{(1)} - \bvar{S}^{(2)})]_i$ is a mixture of normal random variables with covariance given by $\Sigma_D(\bvar{X}_i)$. 
This inspires the test statistic $W_i = \hat{\bvar{D}}_i^T\Sigma_D^{-1}(\bvar{X}_i)\hat{\bvar{D}}_i$. 
Under $H_0$, $\bvar{S}^{(1)} = \bvar{S}^{(2)}$ and $W_i$ takes the form
\begin{align*}
    W_i = \bvar{W}_n^TW_i\bvar{W}_n &= [(\bvar{H}_i - \bvar{H}_{n+i}) + (\bvar{M}_i - \bvar{M}_{n+i})]^T\Sigma_D^{-1}(\bvar{X}_i)[(\bvar{H}_i - \bvar{H}_{n+i}) + (\bvar{M}_i - \bvar{M}_{n+i})]\\
    &+2[(\bvar{H}_i - \bvar{H}_{n+i}) + (\bvar{M}_i - \bvar{M}_{n+i})]^T\Sigma_D^{-1}(\bvar{X}_i)(\bvar{R}^{(2)}_i - \bvar{R}^{(2)}_{n+i})\\
    &+(\bvar{R}^{(2)}_i - \bvar{R}^{(2)}_{n+i})^T\Sigma_D^{-1}(\bvar{X}_i)(\bvar{R}^{(2)}_i - \bvar{R}^{(2)}_{n+i}). 
\end{align*}
Corollary 3.4 establishes that $\sqrt{n}[(\bvar{H}_i - \bvar{H}_{n+i}) + (\bvar{M}_i - \bvar{M}_{n+i})]$ converges in distribution to a mixture of normal random variables with covariance $\Sigma_D(\bvar{x}_i)$. 
Moreover, results in Appendix \ref{Second Moment Appendix} establishes $\sqrt{n}\bvar{R}^{(2)}_i\overset{\prob}{\longrightarrow} 0$.
These results, the fact that $\Sigma_D^{-1}(\bvar{X}_i)$ is bounded in $n$, and Slutsky's Theorem gives 
\begin{align*}
    &n[(\bvar{H}_i - \bvar{H}_{n+i}) + (\bvar{M}_i - \bvar{M}_{n+i})]^T\Sigma_D^{-1}(\bvar{X}_i)(\bvar{R}^{(2)}_i - \bvar{R}^{(2)}_{n+i}) \overset{\prob}{\longrightarrow}\bvar{0}\\
    &n(\bvar{R}^{(2)}_i - \bvar{R}^{(2)}_{n+i})^T\Sigma_D^{-1}(\bvar{X}_i)(\bvar{R}^{(2)}_i - \bvar{R}^{(2)}_{n+i}) \overset{\prob}{\longrightarrow}\bvar{0}
\end{align*}
All that remains is analyzing $n[(\bvar{H}_i - \bvar{H}_{n+i}) + (\bvar{M}_i - \bvar{M}_{n+i})]^T\Sigma_D^{-1}(\bvar{X}_i)[(\bvar{H}_i - \bvar{H}_{n+i}) + (\bvar{M}_i - \bvar{M}_{n+i})]$.
Corollary 4.4 establishes that $\sqrt{n}\Sigma_D^{-1/2}(\bvar{X}_i)[(\bvar{N}_i - \bvar{N}_{n+i}) + (\bvar{M}_i - \bvar{M}_{n+i})] \overset{D}{\longrightarrow} N(0, \bvar{I})$. 
Therefore, by the second order Delta Method,
\begin{align*}
    n[(\bvar{H}_i - \bvar{H}_{n+i}) + (\bvar{M}_i - \bvar{M}_{n+i})]^T\Sigma_D^{-1}(\bvar{X}_i)[(\bvar{H}_i - \bvar{H}_{n+i}) + (\bvar{M}_i - \bvar{M}_{n+i})] \overset{D}{\longrightarrow}\chi^2_d
\end{align*}
Under the alternative hypothesis, $\bvar{C}^{(1)} \neq \bvar{C}^{(2)}$ and $\bvar{S}^{(1)} \neq \bvar{S}^{(2)}$.
Following a similar analysis as above, the results from Appendix \ref{First Moment Appendix} and Appendix \ref{Second Moment Appendix} with Slutsky's Theorem and Corollary 4.4 gives 
\begin{align*}
&\lim_{n\to \infty}\prob\left[\sqrt{n}(W_i - \bvar{X}_i^T(\bvar{S}^{(1)} - \bvar{S}^{(2)})^T\Sigma_D^{-1}(\bvar{X}_i)(\bvar{S}^{(1)} - \bvar{S}^{(2)}))\leq x\right]\\
&= 
\lim_{n\to \infty}\prob\left[\sqrt{n}(2\bvar{X}_i^T(\bvar{S}^{(1)} - \bvar{S}^{(2)})^T\Sigma_D^{-1}(\bvar{X}_i)(\bvar{H}_i + \bvar{M}_i - \bvar{H}_{n+i} - \bvar{M}_{n+i})\bvar{X}_i\leq x\right]\\
&= \int_{\text{supp}(F)} \Phi(x; 0, 4\bvar{y}^T(\bvar{S}^{(1)} - \bvar{S}^{(2)})^T\Sigma_D^{-1}(\bvar{y})(\bvar{S}^{(1)} - \bvar{S}^{(2)})\bvar{y})dF(\bvar{y}).
\end{align*}
\end{proof}

\end{document}